\pgfplotsset{compat=1.14}
\theoremstyle{plain}
\newtheorem{theorem}{Theorem}[section]
\newtheorem{lemma}[theorem]{Lemma}
\theoremstyle{definition}
\newtheorem{example}[theorem]{Example}
\theoremstyle{remark}
\newtheorem{remark}{Remark}
   \tikzset{
   modal/.style={>=stealth,shorten >=1pt,shorten <=1pt,auto,node distance=1.5cm,
   semithick},
   world/.style={circle,draw,minimum size=0.5cm,fill=gray!15},
   point/.style={circle,draw,inner sep=0.5mm,fill=black},
   reflexive above/.style={->,loop,looseness=7,in=120,out=60},
   reflexive below/.style={->,loop,looseness=7,in=240,out=300},
   reflexive left/.style={->,loop,looseness=7,in=150,out=210},
   reflexive right/.style={->,loop,looseness=7,in=30,out=330}
   }
\definecolor{cadmiumgreen}{rgb}{0.0, 0.42, 0.24}
\newcommand{\ceilof}{\lceil\frac{n}{2} \rceil}
\newcommand{\avgdeg}[1]{2 - \frac{2}{#1} }
\begin{document}
\pagestyle{myheadings}

\title[Spectral ordering 2-switch]{Spectral ordering and 2-switch transformations}
\subjclass{05C50, 05C05, 15A18}
\keywords{spectral radius; tree; 2-switch; ordering;}
\author[E. Oliveira]{Elismar Oliveira}
\address{Instituto de Matem\'atica e Estat\'{\i}stica, UFRGS, Porto Alegre, Brazil}
\email{\tt elismar.oliveira@ufrgs.br}
\author[V. Schv\"ollner]{Victor N. Schv\"ollner}
\address{Instituto de Matem\'atica Aplicada San Luis (UNSL-CONICET), Universidad Nacional de San Luis}
\email{\tt victor.schvollner.tag@gmail.com}
\author[V. Trevisan]{Vilmar Trevisan}
\address{Instituto de Matem\'atica e Estat\'{\i}stica, UFRGS, Porto Alegre, Brazil \and Department of Mathematics and Applications, University of Naples Federico II, Italy}
\email{\tt trevisan@mat.ufrgs.br}

\begin{abstract}
We address the problem of ordering trees with the same degree  sequence by their spectral radii. To achieve that, we consider 2-switch transformations which preserve the degree sequence and establish when the index decreases. Our main contribution is to determine a total ordering of a particular family by their indices according to a given parameter related to sizes in the tree.
\end{abstract}

\maketitle

\section{Introduction}
\label{sec:intro}
Giving a graphical degree sequence, a general, natural, and well studied  problem is to determine, with respect to a given parameter, the extremal members in the family of graphs satisfying this degree sequence. A 2-switch transformation (see definition below) is a convenient way to study this problem since it is a well known fact that for two graphs with the same degree sequence, one can be obtained from the other by applying successive 2-switches.

The main purpose of this note is to address the problem of finding extremal members in families of graphs having the same degree sequence, with respect to the \emph{spectral radius (or index)}, which is the largest eigenvalue of the adjacency matrix.  We remark that this problem has been studied in great generality in the celebre paper by T. Biyiko\u{g}lu and J. Leydold \cite{Biy08}, where the authors show that, in the maximum element, the degree sequence is non-increasing with respect to an ordering of the vertices induced by breadth-first search that is consistent with the eigenvector associated with the index. In \cite{BeLiSi06}, the authors determined the tree having maximum spectral radius among all caterpillar with a fixed degree sequence. We observe that in both papers 2-switch transformations is used to analyse the variation of the index.

In order to explain our results, we need a few definitions.
For a graph $G=(V,E)$ having four distinct vertices $a,b,c,d\in V$ such that $ab,cd\in E$ and $ac,bd\notin E$, the removal of the edges $ab$ and $cd$ from $G$ and the addition of  $ac$ and $bd$ to $G$ is referred to as a \emph{ 2-switch} in $G$. This is a well studied classical operation (see, for example \cite{Berge, FHM}). It is  straightforward to check that 2-switch operations preserve the  degree sequence.

As a way to illustrate how this class of problems may be approached, we study how the spectral radius varies upon 2-switch transformations in the family $\mathfrak{F}(n)$ of trees given in Figure \ref{main_fam}. The technique we use is a powerful algorithmic tool that allows one to compare the indices of two trees without computing them. Our main result is a total ordering in this family and, as a consequence, we obtain the extremal members.

We believe that this result is remarkable, since it is quite unusual to obtain a total order by any graph parameter. Spectral parameters have being used to classify many families, however it is rare that a total order is obtained. As examples, we refer to the papers \cite{BeLiSi2,BeOlTr,Cha03,Hof97,Lin2006,Oli18,Zha02}, where the ordering of graphs by the index is studied.

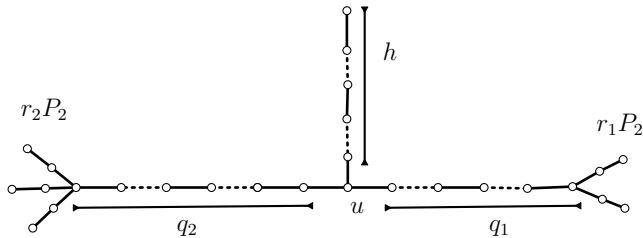
\begin{figure}[H]
  \centering
\definecolor{ffffff}{rgb}{1,1,1}
\definecolor{cqcqcq}{rgb}{0.7529411764705882,0.7529411764705882,0.7529411764705882}
\begin{tikzpicture}[line cap=round,line join=round,>=triangle 45,x=1cm,y=1cm,scale=0.6, every node/.style={scale=0.8}]
\clip(-9,-3.0) rectangle (9,3.3);
\draw [line width=1pt] (5.68,-1.24)-- (6.18,-1.44);
\draw [line width=1pt] (5.62,-0.62)-- (6.14,-0.36);
\draw [line width=1pt] (0.04,-0.98)-- (1.02,-0.98);
\draw [line width=1pt] (-5.98,-0.98)-- (-4.98,-0.98);
\draw [line width=1pt] (-6.48,-1.44)-- (-6.94,-1.88);
\draw [line width=1pt] (4.02,-1)-- (5.02,-0.96);
\draw [line width=1pt] (5.02,-0.96)-- (5.68,-1.24);
\draw [line width=1pt] (5.02,-0.96)-- (5.62,-0.62);
\draw [line width=1pt,dotted] (-3.98,-0.98)-- (-4.98,-0.98);
\draw (-0.1,-1.1) node[anchor=north west] {$u$};
\draw [line width=1pt] (-5.98,-0.98)-- (-6.48,-1.44);
\draw [line width=1pt] (0.02,2.94)-- (0.02,2.06);
\draw [line width=1pt] (0.04,1.3)-- (0.02,0.54);
\draw [line width=1pt,dotted] (-2.98,-0.98)-- (-1.96,-0.98);
\draw [line width=1pt] (-7.4,-1.02)-- (-6.66,-1);
\draw [line width=1pt] (-7.06,-0.12)-- (-6.52,-0.54);
\draw [line width=1pt] (-6.52,-0.54)-- (-5.98,-0.98);
\draw [line width=1pt] (-6.66,-1)-- (-5.98,-0.98);
\draw (5.36,0.84) node[anchor=north west] {$r_1 P_2$};
\draw (-7.4,1.18) node[anchor=north west] {$r_2 P_2$};
\draw [line width=1pt,dotted] (0.02,2.06)-- (0.04,1.3);
\draw [line width=1pt] (0.04,-0.3)-- (0.04,-0.98);
\draw [line width=1pt] (3.06,-0.98)-- (2.04,-0.98);
\draw [line width=1pt] (-0.94,-0.98)-- (0.04,-0.98);
\draw [line width=1pt] (-1.96,-0.98)-- (-0.94,-0.98);
\draw [line width=1pt] (-3.98,-0.98)-- (-2.98,-0.98);
\draw [line width=1pt,dotted] (3.06,-0.98)-- (4.02,-1);
\draw [line width=1pt,dotted] (1.02,-0.98)-- (2.04,-0.98);
\draw [line width=1pt,dotted] (0.02,0.54)-- (0.04,-0.3);
\draw (-3.94,-1.52) node[anchor=north west] {$q_2$};
\draw (2.98,-1.52) node[anchor=north west] {$q_1$};
\draw (0.62,2.42) node[anchor=north west] {$h$};
\draw [line width=0.8pt] (-6.001767441860463,-1.4293023255813937)-- (-0.7790697674418636,-1.38427906976744);
\draw [line width=0.8pt] (0.9093023255813963,-1.4293023255813937)-- (5.164,-1.38427906976744);
\draw [line width=0.8pt] (0.4140465116279048,2.982976744186044)-- (0.4140465116279048,-0.43879069767442);
\begin{scriptsize}
\draw [fill=ffffff] (-3.98,-0.98) circle (2.5pt);
\draw [fill=ffffff] (-4.98,-0.98) circle (2.5pt);
\draw [fill=ffffff] (5.68,-1.24) circle (2.5pt);
\draw [fill=ffffff] (6.18,-1.44) circle (2.5pt);
\draw [fill=ffffff] (0.04,-0.98) circle (2.5pt);
\draw [fill=ffffff] (5.62,-0.62) circle (2.5pt);
\draw [fill=ffffff] (6.14,-0.36) circle (2.5pt);
\draw [fill=ffffff] (1.02,-0.98) circle (2.5pt);
\draw [fill=ffffff] (4.02,-1) circle (2.5pt);
\draw [fill=ffffff] (-5.98,-0.98) circle (2.5pt);
\draw [fill=ffffff] (5.02,-0.96) circle (2.5pt);
\draw [fill=ffffff] (-6.48,-1.44) circle (2.5pt);
\draw [fill=ffffff] (-6.94,-1.88) circle (2.5pt);
\draw [fill=ffffff] (0.02,2.94) circle (2.5pt);
\draw [fill=ffffff] (0.02,2.06) circle (2.5pt);
\draw [fill=ffffff] (0.04,1.3) circle (2.5pt);
\draw [fill=ffffff] (0.02,0.54) circle (2.5pt);
\draw [fill=ffffff] (0.04,-0.3) circle (2.5pt);
\draw [fill=ffffff] (-2.98,-0.98) circle (2.5pt);
\draw [fill=ffffff] (-1.96,-0.98) circle (2.5pt);
\draw [fill=ffffff] (-7.4,-1.02) circle (2.5pt);
\draw [fill=ffffff] (-6.66,-1) circle (2.5pt);
\draw [fill=ffffff] (-7.06,-0.12) circle (2.5pt);
\draw [fill=ffffff] (-6.52,-0.54) circle (2.5pt);
\draw [fill=ffffff] (3.06,-0.98) circle (2.5pt);
\draw [fill=ffffff] (2.04,-0.98) circle (2.5pt);
\draw [fill=ffffff] (-1.96,-0.98) circle (2.5pt);
\draw [fill=ffffff] (-0.94,-0.98) circle (2.5pt);
\draw [fill=black,shift={(-6.001767441860463,-1.4293023255813937)},rotate=270] (0,0) ++(0 pt,2.25pt) -- ++(1.9485571585149868pt,-3.375pt)--++(-3.8971143170299736pt,0 pt) -- ++(1.9485571585149868pt,3.375pt);
\draw [fill=black,shift={(-0.7790697674418636,-1.38427906976744)},rotate=90] (0,0) ++(0 pt,2.25pt) -- ++(1.9485571585149868pt,-3.375pt)--++(-3.8971143170299736pt,0 pt) -- ++(1.9485571585149868pt,3.375pt);
\draw [fill=black,shift={(0.9093023255813963,-1.4293023255813937)},rotate=270] (0,0) ++(0 pt,2.25pt) -- ++(1.9485571585149868pt,-3.375pt)--++(-3.8971143170299736pt,0 pt) -- ++(1.9485571585149868pt,3.375pt);
\draw [fill=black,shift={(5.164,-1.38427906976744)},rotate=90] (0,0) ++(0 pt,2.25pt) -- ++(1.9485571585149868pt,-3.375pt)--++(-3.8971143170299736pt,0 pt) -- ++(1.9485571585149868pt,3.375pt);
\draw [fill=black,shift={(0.4140465116279048,2.982976744186044)},rotate=180] (0,0) ++(0 pt,2.25pt) -- ++(1.9485571585149868pt,-3.375pt)--++(-3.8971143170299736pt,0 pt) -- ++(1.9485571585149868pt,3.375pt);
\draw [fill=black,shift={(0.4140465116279048,-0.43879069767442)}] (0,0) ++(0 pt,2.25pt) -- ++(1.9485571585149868pt,-3.375pt)--++(-3.8971143170299736pt,0 pt) -- ++(1.9485571585149868pt,3.375pt);
\end{scriptsize}
\end{tikzpicture} 
  \caption{The family of trees $\mathfrak{F}(n)$. }\label{main_fam}
\end{figure}

The remaining of the paper is as follows. In order to explain that the family we study is not arbitrary, we devote the rest of this introduction to justify our choice. In Section \ref{sec:family}, we define the family $\mathfrak{F}(n)$ and the 2-switch transformations we perform. Moreover, we explain our powerful technique to obtain the order, that is based on an algorithmic tool, allowing to compare indices of trees without computing them. In Section \ref{sec:analytical properties} we obtain necessary analytical properties of some recurrence relations that appear in our comparison method. In Section \ref{sec:ordering the family} we show how the spectral radius varies upon 2-switches transformations. In Section \ref{sec: ordering}, we use the 2-switches to obtain a total ordering in $\mathfrak{F}(n)$. Finally, in Section \ref{sec: additional considerations} we reason how 2-switches may be useful for the class of problems proposed here.

\subsection{Motivation for choosing the family}

We start by introducing some notation from \cite{JaOlTr} which is specially useful to represent the trees in the family $\mathfrak{F}(n)$. In that paper, it was proven that the number of Laplacian eigenvalues less than the average degree $\avgdeg{n}$ of a tree having $n$ vertices is at least $\ceilof$. We remark that pendant paths of length 2 play an important role there and serve as a motivation for our choice.

Let $T$ be a tree with $n$ vertices, and let $u$ be a vertex of degree at least $\ell$ of $T$ having $\ell \geq 1$ pendant paths attached at $u$.
We denote the \emph{sum} of pendant paths attached at $u$ by $P(u)=P_{q_1}\oplus\cdots \oplus P_{q_\ell}$, as illustrated in Figure \ref{fig:ex1}. The number of edges in each path is denoted by $\sharp P_{q}=q$.
\begin{figure}[!ht]
  \centering
\definecolor{rvwvcq}{rgb}{0.08235294117647059,0.396078431372549,0.7529411764705882}
\begin{tikzpicture}[line cap=round,line join=round,>=triangle 45,x=1cm,y=1cm, scale=0.6, every node/.style={scale=0.7}]
\clip(-4,-4) rectangle (3,1);
\draw [line width=1pt,dash pattern=on 1pt off 1pt] (2,0.47)-- (1.34,-0.27);
\draw [line width=1pt] (1.34,-0.27)-- (0.86,-0.81);
\draw [line width=1pt] (0.86,-0.81)-- (0.08,-0.99);
\draw [line width=1pt] (0.08,-0.99)-- (-0.58,-1.17);
\draw [line width=1pt] (0.86,-0.81)-- (0.54,-1.27);
\draw [line width=1pt] (0.54,-1.27)-- (0.18,-1.75);
\draw [line width=1pt] (0.18,-1.75)-- (-0.2,-2.23);
\draw [line width=1pt] (-0.2,-2.23)-- (-0.56,-2.71);
\draw [line width=1pt] (0.86,-0.81)-- (1.22,-1.47);
\draw [line width=1pt] (0.86,-0.81)-- (0.7,-1.69);
\draw [line width=1pt] (0.7,-1.69)-- (0.58,-2.29);
\draw [line width=1pt] (0.58,-2.29)-- (0.48,-2.79);
\draw [line width=1pt] (0.48,-2.79)-- (0.38,-3.37);
\draw [line width=1pt] (0.86,-0.81)-- (0.22,-0.39);
\draw [line width=1pt] (0.22,-0.39)-- (-0.38,-0.01);
\draw (0.42,0.19) node[anchor=north west] {\tiny{$u$}};
\draw (-1.6,0.49) node[anchor=north west] {\tiny{$P_2$}};
\draw (-1.7,-0.85) node[anchor=north west] {\tiny{$P_2$}};
\draw (0.6,-2.83) node[anchor=north west] {\tiny{$P_4$}};
\draw (-1.8,-2.25) node[anchor=north west] {\tiny{$P_5$}};
\draw (1.3,-1.47) node[anchor=north west] {\tiny{$P_1$}};
\draw [line width=1pt] (-0.56,-2.71)-- (-0.9,-3.21);
\begin{scriptsize}
\draw [fill=black] (1.34,-0.27) circle (2.5pt);
\draw [fill=black] (0.86,-0.81) circle (2.5pt);
\draw [fill=black] (0.08,-0.99) circle (2.5pt);
\draw [fill=black] (-0.58,-1.17) circle (2.5pt);
\draw [fill=black] (0.54,-1.27) circle (2.5pt);
\draw [fill=black] (0.18,-1.75) circle (2.5pt);
\draw [fill=black] (-0.2,-2.23) circle (2.5pt);
\draw [fill=black] (-0.56,-2.71) circle (2.5pt);
\draw [fill=black] (1.22,-1.47) circle (2.5pt);
\draw [fill=black] (0.7,-1.69) circle (2.5pt);
\draw [fill=black] (0.58,-2.29) circle (2.5pt);
\draw [fill=black] (0.48,-2.79) circle (2.5pt);
\draw [fill=black] (0.38,-3.37) circle (2.5pt);
\draw [fill=black] (0.22,-0.39) circle (2.5pt);
\draw [fill=black] (-0.38,-0.01) circle (2.5pt);
\draw [fill=black] (-0.9,-3.21) circle (2.5pt);
\end{scriptsize}
\end{tikzpicture}
\qquad
\begin{tikzpicture}[line cap=round,line join=round,>=triangle 45,x=1cm,y=1cm,scale=0.6, every node/.style={scale=0.7}]
\clip(-8.86,-0.64) rectangle (2.58,4);
\draw [line width=1pt,dash pattern=on 1pt off 1pt] (-6.12,0.44)-- (-6.92,0);
\draw [line width=1pt] (-4.38526,1.46298)-- (-3.8329483896370125,2.3307297959890714);
\draw [line width=1pt] (-3.8329483896370125,2.3307297959890714)-- (-3.5034939745135114,2.8484438768974316);
\draw [line width=1pt] (-4.38526,1.46298)-- (-4.068272971868086,2.4248596288815003);
\draw [line width=1pt] (-4.068272971868086,2.4248596288815003)-- (-3.8643250006011565,3.0994567646105757);
\draw [line width=1pt] (-4.38526,1.46298)-- (-4.31928585958123,2.4876128508097866);
\draw [line width=1pt] (-4.31928585958123,2.4876128508097866)-- (-4.3035975540991585,3.1465216810567904);
\draw [line width=1pt] (-4.38526,1.46298)-- (-4.63305196922266,2.4876128508097866);
\draw [line width=1pt] (-4.63305196922266,2.4876128508097866)-- (-4.758558413079232,3.162209986538862);
\draw (-5.28,1.04) node[anchor=north west] {{\tiny $P_3$}};
\draw [line width=1pt,dash pattern=on 1pt off 1pt] (0.44,1.13)-- (-0.36,0.69);
\draw [line width=1pt] (0.44,1.13)-- (1.25474,1.57298);
\draw (0.64,1.30) node[anchor=north west] {{\tiny $P_3*S_4$}};
\draw [->,line width=1pt] (-3.14,0.34) -- (-1.82,0.34);
\draw (-4.26,3.9) node[anchor=north west] {{\tiny $4\, P_2$}};
\draw (-4.34,1.66) node[anchor=north west] {{\tiny $v$}};
\draw (-4.6,0.3) node[anchor=north west] {{\tiny $(P_q,S_r) \text{ representation}$}};
\draw (0.08,1.74) node[anchor=north west] {{\tiny $u$}};
\draw [line width=1pt] (-6.12,0.44)-- (-5.58,0.76);
\draw [line width=1pt] (-5.58,0.76)-- (-5.02,1.08);
\draw [line width=1pt] (-5.02,1.08)-- (-4.38526,1.46298);
\draw (-6.5,1.08) node[anchor=north west] {{\tiny $u$}};
\draw [line width=1pt,dash pattern=on 1pt off 1pt] (-6.12,0.44)-- (-6.34,-0.38);
\draw [line width=1pt,dash pattern=on 1pt off 1pt] (0.44,1.13)-- (0.36,0.32);
\begin{scriptsize}
\draw [fill=black] (-6.12,0.44) circle (2.5pt);
\draw [fill=black] (-4.38526,1.46298) circle (3pt);
\draw [fill=black] (-3.8329483896370125,2.3307297959890714) circle (2.5pt);
\draw [fill=black] (-3.5034939745135114,2.8484438768974316) circle (2.5pt);
\draw [fill=black] (-4.068272971868086,2.4248596288815003) circle (2.5pt);
\draw [fill=black] (-3.8643250006011565,3.0994567646105757) circle (2.5pt);
\draw [fill=black] (-4.31928585958123,2.4876128508097866) circle (2.5pt);
\draw [fill=black] (-4.3035975540991585,3.1465216810567904) circle (2.5pt);
\draw [fill=black] (-4.63305196922266,2.4876128508097866) circle (2.5pt);
\draw [fill=black] (-4.758558413079232,3.162209986538862) circle (2.5pt);
\draw [fill=black] (0.44,1.13) circle (2.5pt);
\draw [fill=black] (1.25474,1.57298) ++(-4.5pt,0 pt) -- ++(4.5pt,4.5pt)--++(4.5pt,-4.5pt)--++(-4.5pt,-4.5pt)--++(-4.5pt,4.5pt);
\draw [fill=black] (-5.58,0.76) circle (2.5pt);
\draw [fill=black] (-5.02,1.08) circle (2.5pt);
\end{scriptsize}
\end{tikzpicture} 
\caption{Vertex $u$ with $P(u)=P_{1}\oplus 2P_{2}\oplus P_{4}\oplus P_{5}$ (left) and $(P_q, S_r)$ representation of a generalized pendant path (right).}\label{fig:ex1}
\end{figure}
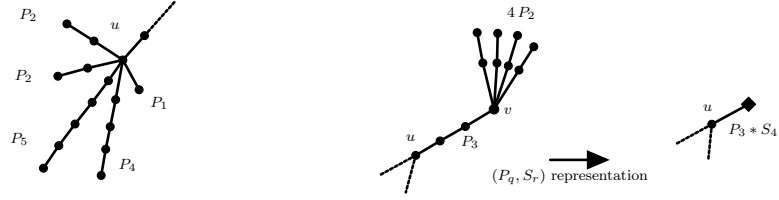
A subgraph obtained by a vertex $u$ attached to $r\geq 1$ paths of length 2,
is called a {\em sun with $r$ rays} and denoted by $S_r$.
To simplify the representation, we use the concatenation symbol and write $P_q \ast S_r$.
To further simplify the graphic part of the representation, we will use a black square {\tiny{$\blacksquare$}} to represent a pendant sun $S_r$ attached to a vertex, and a single edge to represent the entire path $P_q$, omitting the $r$ pendant $P_2$'s and the $q$ vertices.
We will refer to this as the $(P_q,S_r)$ \emph {representation} of this generalized pendant path $P_q \ast S_r$, as shown in Figure~\ref{fig:ex1} (right).
We can consider $q=0$ for paths
$P_q$ of length 0, as well as $r=0$ for no pendant $S_r$.
However, we do \emph{not} allow both $r=q=0$ simultaneously.\\
We say a vertex $u$ is a starlike vertex if it has degree $\geq 3$ and has at least two generalized pendant paths attached to it.
Using this notation, we can write any tree $T$ having a single starlike vertex $u$ as
$T=u + P_{q_{1}} \ast  S_{r_{1}} \oplus \cdots \oplus P_{q_{\ell}} \ast  S_{r_{\ell}} $ where $\ell \geq 1$.

In particular, any  member of $\mathfrak{F}(n)$, as in the Figure~\ref{main_fam}, has a single starlike vertex $u$ and is represented as
$$T=u + P_{h} \ast  S_{0} \oplus P_{q_{1}} \ast  S_{r_{1}} \oplus P_{q_{2}} \ast  S_{r_{2}},$$
for $h, q_1, q_2  \geq 2$.

Our choice of the family is related to the fact that we would like to study the spectral radius ordering of 2-switches on trees having a single starlike vertex. We notice that the general family of trees $\mathfrak{H}$ with just one starlike vertex in this notation is given by,
$$\mathfrak{H}:=\{T\,|\,T=u + P_{q_0} \ast  S_{r_0} \oplus P_{q_{1}} \ast  S_{r_{1}} \oplus P_{q_{2}} \ast  S_{r_{2}},\; q_0, q_1, q_2  \geq 0, r_0, r_1, r_2 \geq 0\}.$$
In this generality, our tool becames too involved and, hence, in order to simplify the computations and to obtain symmetry, we have chosen to study a special case of this family $\mathfrak{H}$, where we replace $S_{r_0}$ by $S_0$ (no pendant $P_2$'s on the path $P_h$).

\section{A family and our tool}
\label{sec:family}
In this section we define the family which we will determine a spectral radius ordering when performing 2-switch operation, as well as an algorithmic tool that we believe it is powerful for this class of problems.

\subsection{Our Family}
We consider the family $\mathfrak{F}(n)$ of all trees $T$ given by Figure~\ref{main_fam}, with the following constraints:
\begin{itemize}
  \item[1)] $q_1, q_2 \geq 2$;
  \item[2)] $r_1, r_2 \geq 2$ and $r_1 < r_2$;
  \item[3)] $h \geq 2$.
\end{itemize}

Notice that, in this case, the number of vertices is $n=|T|= 1 + h + q_1 + q_2 + 2 (r_1+r_2) \geq 7+ 2 (r_1+r_2)\geq 17$.

We are interested in ordering by the index (spectral radius) the trees with a fixed degree sequence, in  $\mathfrak{F}(n)$. As a particular case, we obtain the extreme members of the family, that is, the trees in $\mathfrak{F}(n)$ having largest and minimum spectral radius. The degree sequence a $T \in \mathfrak{F}(n)$ is given by
$$d:=[r_1 +1, r_2 +1, 3, 2^{n-(r_1+r_2) -2},1^{r_1+r_2+1}].$$
In order to keep the degree sequence we fix $r_2 > r_1 \geq 2$. In this way each element in $\mathfrak{F}(n)$, with $n$ vertices and degree sequence $d$ is uniquely determined by the 3-uple $[h, q_1, q_2]$ that is,
$$\mathfrak{F}(n):=\{T=[h, q_1, q_2] \; | \; h + q_1 + q_2 =n-1-2 (r_1+r_2)\}.$$

Now we consider two types of 2-switches on $T=[h, q_1, q_2] \in \mathfrak{F}(n)$. Later, in Section \ref{sec:ordering the family}, we determine how to order these operations by their spectral radii.

\begin{itemize}
 \item[Type I -] We switch the vertices between the central path and the right (or the left) branch of  $T$ obtaining a new tree $T$(see Figure~\ref{TypeISwitch}).  More precisely, we have a  new member of $\mathfrak{F}(n)$, where the parameters are changed by $q'_1 =h-s+t$ (the closest to $r_1 \, P_2$) and $h'=q_1-t+s$ (the new central path after some relabeling). Let us call that a $(s,t)$-2-switch of type II (see Figure~\ref{TypeISwitch}). In order to do the 2-switch, we disconnect the edges $[u_1,u_2]$ and $[w_1,w_2]$ and reconnect $[u_1,w_2]$ and $[w_1,u_2]$.
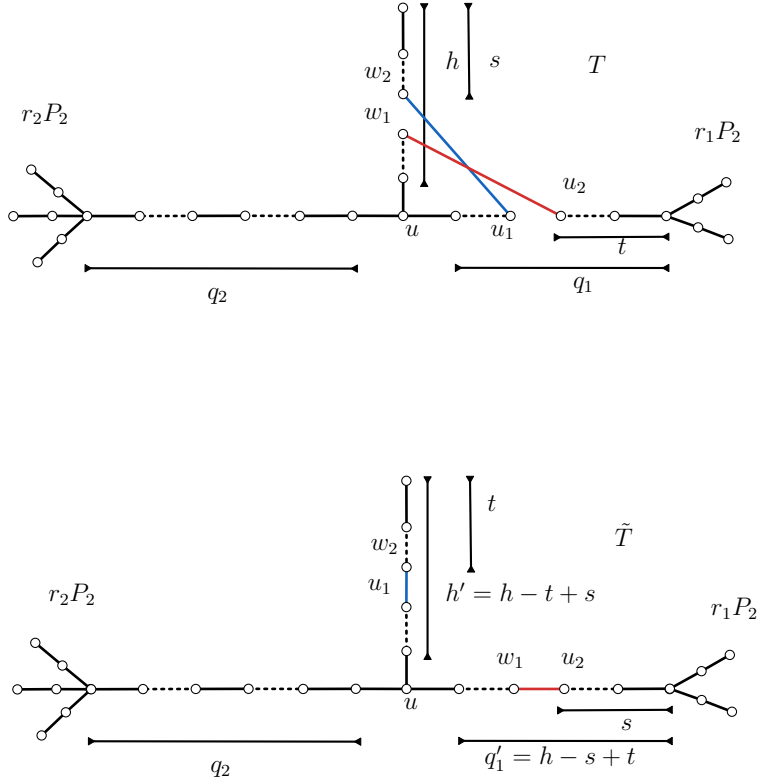
\begin{figure}[H]
  \centering
\definecolor{dtsfsf}{rgb}{0.8274509803921568,0.1843137254901961,0.1843137254901961}
\definecolor{rvwvcq}{rgb}{0.08235294117647059,0.396078431372549,0.7529411764705882}
\definecolor{ffffff}{rgb}{1,1,1}
\begin{tikzpicture}[line cap=round,line join=round,>=triangle 45,x=1cm,y=1cm,scale=0.7, every node/.style={scale=0.8}]
\clip(-11.0,-12.8) rectangle (11.0,3.8);
\draw [line width=1pt] (5.604841723994303,-1.217642963465425)-- (6.18,-1.44);
\draw [line width=1pt] (5.602336040821398,-0.6688983485991646)-- (6.14,-0.36);
\draw [line width=1pt] (0.0030355131150047,-0.9993162365969024)-- (0.9993960521943549,-0.9996485274226641);
\draw [line width=1pt] (-5.998977049730127,-1.0021542105955705)-- (-5.001715146913821,-1.0021542105955705);
\draw [line width=1pt] (-6.48,-1.44)-- (-6.94,-1.88);
\draw [line width=1pt] (4.02,-1)-- (5.000972079324126,-1.0021542105955694);
\draw [line width=1pt] (5.000972079324126,-1.0021542105955694)-- (5.604841723994303,-1.217642963465425);
\draw [line width=1pt] (5.000972079324126,-1.0021542105955694)-- (5.602336040821398,-0.6688983485991646);
\draw [line width=1pt,dotted] (-3.999441877751704,-0.999648527422664)-- (-5.001715146913821,-1.0021542105955705);
\draw (-0.1239266590351313,-1.0001214931835896) node[anchor=north west] {$u$};
\draw [line width=1pt] (-5.998977049730127,-1.0021542105955705)-- (-6.48,-1.44);
\draw [line width=1pt] (0.000008795653280872282,2.9566034858761667)-- (0.0030355131150047,2.075828704514527);
\draw [line width=1pt,dotted] (-2.9971686085895866,-0.9971428442497587)-- (-1.9598157750067955,-0.9996485274226641);
\draw [line width=1pt] (-7.399653943384187,-1.0046598937684774)-- (-6.66,-1);
\draw [line width=1pt] (-7.06,-0.12)-- (-6.545215981423482,-0.5511312394726173);
\draw [line width=1pt] (-6.545215981423482,-0.5511312394726173)-- (-5.998977049730127,-1.0021542105955705);
\draw [line width=1pt] (-6.66,-1)-- (-5.998977049730127,-1.0021542105955705);
\draw (5.3692032803048635,0.9637436146633477) node[anchor=north west] {$r_1 P_2$};
\draw (-7.410150827278855,1.3052853725497715) node[anchor=north west] {$r_2 P_2$};
\draw [line width=1pt,dotted] (0.0030355131150047,2.075828704514527)-- (0.0030355131150047,1.3161226216218413);
\draw [line width=1pt] (0.0030355131150047,-0.27895748070662657)-- (0.0030355131150047,-0.9993162365969024);
\draw [line width=1pt] (-0.9600481890175837,-0.9971428442497587)-- (0.0030355131150047,-0.9993162365969024);
\draw [line width=1pt,dotted] (2.9989312241727784,-0.9996485274226641)-- (4.02,-1);
\draw [line width=1pt,dotted] (0.9993960521943549,-0.9996485274226641)-- (2.0417602521229568,-0.9996485274226641);
\draw [line width=1pt,dotted] (0.000008795653280872282,0.5564165387291554)-- (0.0030355131150047,-0.27895748070662657);
\draw (-3.8808859957858015,-2.2) node[anchor=north west] {$q_2$};
\draw (3.063796414571498,-2) node[anchor=north west] {$q_1$};
\draw (0.6445422962093239,2.3014488330518414) node[anchor=north west] {$h$};
\draw [line width=0.8pt] (-5.998722801439957,-1.993969849943333)-- (-0.9055039114889535,-2.0012627655088457);
\draw [line width=0.8pt] (1.034112644044173,-1.9864528004246156)-- (5.009814577362994,-1.9896298874580076);
\draw [line width=0.8pt] (0.40256221806254994,2.992924095416853)-- (0.40256221806254994,-0.4000261791755805);
\draw [line width=1pt] (-1.9598157750067955,-0.9996485274226641)-- (-0.9600481890175837,-0.9971428442497587);
\draw (1.4983966909253854, -1) node[anchor=north west] {$u_1$};
\draw (-0.9,1.2198999330781657) node[anchor=north west] {$w_1$};
\draw (-0.9,1.9883688883226194) node[anchor=north west] {$w_2$};
\draw (2.8645637224710834,-0.11780528531032793) node[anchor=north west] {$u_2$};
\draw [line width=0.8pt] (2.9108594463914534,-1.4067713010116925)-- (5.009814577362994,-1.396353106865182);
\draw (1.4699348777681833,2.2014488330518414) node[anchor=north west] {$s$};
\draw (3.9176508092875593,-1.2570451194979935) node[anchor=north west] {$t$};
\draw [line width=1pt] (0.06693641282295731,-9.99028245728597)-- (1.0632969519023077,-9.99061474811173);
\draw [line width=1pt,dotted] (-3.9355409780437496,-9.99061474811173)-- (-4.937814247205868,-9.993120431284636);
\draw (-0.15238847219233334,-9.994054450859418) node[anchor=north west] {$u$};
\draw [line width=1pt] (0.0639096953612342,-6.034362734812899)-- (0.06693641282295731,-6.915137516174538);
\draw [line width=1pt,color=rvwvcq] (0.06693641282295731,-7.674843599067225)-- (0.0639096953612342,-8.434549681959911);
\draw [line width=1pt,dotted] (-2.9332677088816332,-9.988109064938824)-- (-1.8959148752988424,-9.99061474811173);
\draw (5.682283225034086,-8.087112969326885) node[anchor=north west] {$r_1 P_2$};
\draw (-6.897838190449218,-7.830956650912067) node[anchor=north west] {$r_2 P_2$};
\draw [line width=1pt,dotted] (0.06693641282295731,-6.915137516174538)-- (0.06693641282295731,-7.674843599067225);
\draw [line width=1pt] (0.06693641282295731,-9.269923701395692)-- (0.06693641282295731,-9.99028245728597);
\draw [line width=1pt] (-0.8961472893096307,-9.988109064938824)-- (0.06693641282295731,-9.99028245728597);
\draw [line width=1pt,dotted] (3.0628321238807334,-9.99061474811173)-- (4.083900899707954,-9.990966220689065);
\draw [line width=1pt,dotted] (1.0632969519023077,-9.99061474811173)-- (2.105661151830911,-9.99061474811173);
\draw [line width=1pt,dotted] (0.0639096953612342,-8.434549681959911)-- (0.06693641282295731,-9.269923701395692);
\draw (-3.8239623694713973,-11.2) node[anchor=north west] {$q_2$};
\draw (1.384549438296577,-10.904832471889883) node[anchor=north west] {$q'_1=h-s+t$};
\draw (0.6445422962093239,-7.802494837754865) node[anchor=north west] {$h'=h-t+s$};
\draw [line width=0.8pt] (-5.934821901732002,-10.984936070632399)-- (-0.8416030117810005,-10.99222898619791);
\draw [line width=0.8pt] (1.0980135437521261,-10.97741902111368)-- (5.073715477070952,-10.980596108147074);
\draw [line width=0.8pt] (0.4664631177705029,-5.998042125272213)-- (0.4664631177705029,-9.390992399864645);
\draw [line width=1pt] (-1.8959148752988424,-9.99061474811173)-- (-0.8961472893096307,-9.988109064938824);
\draw (1.6122439435541935,-9.15) node[anchor=north west] {$w_1$};
\draw (2.8361019093138813,-9.15) node[anchor=north west] {$u_2$};
\draw [line width=0.8pt] (2.9747603460994085,-10.39773752170076)-- (5.073715477070952,-10.387319327554247);
\draw (1.4414730646109812,-6.15170967463715) node[anchor=north west] {$t$};
\draw (3.9745744356019634,-10.38978077173822) node[anchor=north west] {$s$};
\draw [line width=1pt] (-3.9355409780437496,-9.99061474811173)-- (-2.9332677088816332,-9.988109064938824);
\draw [line width=1pt,color=dtsfsf] (2.105661151830911,-9.99061474811173)-- (3.0628321238807334,-9.99061474811173);
\draw (3.3484145461435184,2.216063393580235) node[anchor=north west] {$T$};
\draw (3.860727182973155,-6.664022311466786) node[anchor=north west] {$\tilde{T}$};
\draw [line width=0.8pt] (1.24,3)-- (1.26,1.26);
\draw [line width=1pt] (-3.999441877751704,-0.999648527422664)-- (-2.9971686085895866,-0.9971428442497587);
\draw [line width=1pt,color=rvwvcq] (0.0030355131150047,1.3161226216218413)-- (2.0417602521229568,-0.9996485274226641);
\draw [line width=1pt,color=dtsfsf] (0.000008795653280872282,0.5564165387291554)-- (2.9989312241727784,-0.9996485274226641);
\draw (-0.8354719879651824,-7.745571211440462) node[anchor=north west] {$u_1$};
\draw (-0.7785483616507783,-6.977102256196008) node[anchor=north west] {$w_2$};
\draw [line width=1pt] (-6.403293526854049,-10.434223647809214)-- (-6.863293526854047,-10.874223647809211);
\draw [line width=1pt] (-5.922270576584174,-9.996377858404783)-- (-6.403293526854049,-10.434223647809214);
\draw [line width=1pt] (-7.32294747023824,-9.998883541577689)-- (-6.583293526854048,-9.994223647809212);
\draw [line width=1pt] (-6.983293526854048,-9.114223647809212)-- (-6.468509508277529,-9.54535488728183);
\draw [line width=1pt] (-6.468509508277529,-9.54535488728183)-- (-5.922270576584174,-9.996377858404783);
\draw [line width=1pt] (-6.583293526854048,-9.994223647809212)-- (-5.922270576584174,-9.996377858404783);
\draw [line width=1pt] (-5.922270576584174,-9.996377858404783)-- (-4.937814247205868,-9.993120431284636);
\draw [line width=1pt] (5.671890322266495,-10.201419289091856)-- (6.24704859827219,-10.423776325626429);
\draw [line width=1pt] (5.66938463909359,-9.652674674225594)-- (6.207048598272195,-9.343776325626429);
\draw [line width=1pt] (5.068020677596319,-9.985930536221998)-- (5.671890322266495,-10.201419289091856);
\draw [line width=1pt] (5.068020677596319,-9.985930536221998)-- (5.66938463909359,-9.652674674225594);
\draw [line width=1pt] (4.083900899707954,-9.990966220689065)-- (5.068020677596319,-9.985930536221998);
\draw [line width=0.8pt] (1.2728196275257293,-5.988749894975701)-- (1.2928196275257293,-7.728749894975701);
\begin{scriptsize}
\draw [fill=ffffff] (-3.999441877751704,-0.999648527422664) circle (2.5pt);
\draw [fill=ffffff] (-5.001715146913821,-1.0021542105955705) circle (2.5pt);
\draw [fill=ffffff] (5.604841723994303,-1.217642963465425) circle (2.5pt);
\draw [fill=ffffff] (6.18,-1.44) circle (2.5pt);
\draw [fill=ffffff] (0.0030355131150047,-0.9993162365969024) circle (2.5pt);
\draw [fill=ffffff] (5.602336040821398,-0.6688983485991646) circle (2.5pt);
\draw [fill=ffffff] (6.14,-0.36) circle (2.5pt);
\draw [fill=ffffff] (0.9993960521943549,-0.9996485274226641) circle (2.5pt);
\draw [fill=ffffff] (4.02,-1) circle (2.5pt);
\draw [fill=ffffff] (-5.998977049730127,-1.0021542105955705) circle (2.5pt);
\draw [fill=ffffff] (5.000972079324126,-1.0021542105955694) circle (2.5pt);
\draw [fill=ffffff] (-6.48,-1.44) circle (2.5pt);
\draw [fill=ffffff] (-6.94,-1.88) circle (2.5pt);
\draw [fill=ffffff] (0.000008795653280872282,2.9566034858761667) circle (2.5pt);
\draw [fill=ffffff] (0.0030355131150047,2.075828704514527) circle (2.5pt);
\draw [fill=ffffff] (0.0030355131150047,1.3161226216218413) circle (2.5pt);
\draw [fill=ffffff] (0.000008795653280872282,0.5564165387291554) circle (2.5pt);
\draw [fill=ffffff] (0.0030355131150047,-0.27895748070662657) circle (2.5pt);
\draw [fill=ffffff] (-2.9971686085895866,-0.9971428442497587) circle (2.5pt);
\draw [fill=ffffff] (-1.9598157750067955,-0.9996485274226641) circle (2.5pt);
\draw [fill=ffffff] (-7.399653943384187,-1.0046598937684774) circle (2.5pt);
\draw [fill=ffffff] (-6.66,-1) circle (2.5pt);
\draw [fill=ffffff] (-7.06,-0.12) circle (2.5pt);
\draw [fill=ffffff] (-6.545215981423482,-0.5511312394726173) circle (2.5pt);
\draw [fill=ffffff] (2.9989312241727784,-0.9996485274226641) circle (2.5pt);
\draw [fill=ffffff] (2.0417602521229568,-0.9996485274226641) circle (2.5pt);
\draw [fill=ffffff] (-0.9600481890175837,-0.9971428442497587) circle (2.5pt);
\draw [fill=black,shift={(-5.998722801439957,-1.993969849943333)},rotate=270] (0,0) ++(0 pt,2.25pt) -- ++(1.9485571585149868pt,-3.375pt)--++(-3.8971143170299736pt,0 pt) -- ++(1.9485571585149868pt,3.375pt);
\draw [fill=black,shift={(-0.9055039114889535,-2.0012627655088457)},rotate=90] (0,0) ++(0 pt,2.25pt) -- ++(1.9485571585149868pt,-3.375pt)--++(-3.8971143170299736pt,0 pt) -- ++(1.9485571585149868pt,3.375pt);
\draw [fill=black,shift={(1.034112644044173,-1.9864528004246156)},rotate=270] (0,0) ++(0 pt,2.25pt) -- ++(1.9485571585149868pt,-3.375pt)--++(-3.8971143170299736pt,0 pt) -- ++(1.9485571585149868pt,3.375pt);
\draw [fill=black,shift={(5.009814577362994,-1.9896298874580076)},rotate=90] (0,0) ++(0 pt,2.25pt) -- ++(1.9485571585149868pt,-3.375pt)--++(-3.8971143170299736pt,0 pt) -- ++(1.9485571585149868pt,3.375pt);
\draw [fill=black,shift={(0.40256221806254994,2.992924095416853)},rotate=180] (0,0) ++(0 pt,2.25pt) -- ++(1.9485571585149868pt,-3.375pt)--++(-3.8971143170299736pt,0 pt) -- ++(1.9485571585149868pt,3.375pt);
\draw [fill=black,shift={(0.40256221806254994,-0.4000261791755805)}] (0,0) ++(0 pt,2.25pt) -- ++(1.9485571585149868pt,-3.375pt)--++(-3.8971143170299736pt,0 pt) -- ++(1.9485571585149868pt,3.375pt);
\draw [fill=black,shift={(2.9108594463914534,-1.4067713010116925)},rotate=270] (0,0) ++(0 pt,2.25pt) -- ++(1.9485571585149868pt,-3.375pt)--++(-3.8971143170299736pt,0 pt) -- ++(1.9485571585149868pt,3.375pt);
\draw [fill=black,shift={(5.009814577362994,-1.396353106865182)},rotate=90] (0,0) ++(0 pt,2.25pt) -- ++(1.9485571585149868pt,-3.375pt)--++(-3.8971143170299736pt,0 pt) -- ++(1.9485571585149868pt,3.375pt);
\draw [fill=ffffff] (-3.9355409780437496,-9.99061474811173) circle (2.5pt);
\draw [fill=ffffff] (-4.937814247205868,-9.993120431284636) circle (2.5pt);
\draw [fill=ffffff] (0.06693641282295731,-9.99028245728597) circle (2.5pt);
\draw [fill=ffffff] (1.0632969519023077,-9.99061474811173) circle (2.5pt);
\draw [fill=ffffff] (4.083900899707954,-9.990966220689065) circle (2.5pt);
\draw [fill=ffffff] (-5.935076150022173,-9.993120431284636) circle (2.5pt);
\draw [fill=ffffff] (5.0648729790320814,-9.993120431284636) circle (2.5pt);
\draw [fill=ffffff] (0.0639096953612342,-6.034362734812899) circle (2.5pt);
\draw [fill=ffffff] (0.06693641282295731,-6.915137516174538) circle (2.5pt);
\draw [fill=ffffff] (0.06693641282295731,-7.674843599067225) circle (2.5pt);
\draw [fill=ffffff] (0.0639096953612342,-8.434549681959911) circle (2.5pt);
\draw [fill=ffffff] (0.06693641282295731,-9.269923701395692) circle (2.5pt);
\draw [fill=ffffff] (-2.9332677088816332,-9.988109064938824) circle (2.5pt);
\draw [fill=ffffff] (-1.8959148752988424,-9.99061474811173) circle (2.5pt);
\draw [fill=ffffff] (3.0628321238807334,-9.99061474811173) circle (2.5pt);
\draw [fill=ffffff] (2.105661151830911,-9.99061474811173) circle (2.5pt);
\draw [fill=ffffff] (-0.8961472893096307,-9.988109064938824) circle (2.5pt);
\draw [fill=black,shift={(-5.934821901732002,-10.984936070632399)},rotate=270] (0,0) ++(0 pt,2.25pt) -- ++(1.9485571585149868pt,-3.375pt)--++(-3.8971143170299736pt,0 pt) -- ++(1.9485571585149868pt,3.375pt);
\draw [fill=black,shift={(-0.8416030117810005,-10.99222898619791)},rotate=90] (0,0) ++(0 pt,2.25pt) -- ++(1.9485571585149868pt,-3.375pt)--++(-3.8971143170299736pt,0 pt) -- ++(1.9485571585149868pt,3.375pt);
\draw [fill=black,shift={(1.0980135437521261,-10.97741902111368)},rotate=270] (0,0) ++(0 pt,2.25pt) -- ++(1.9485571585149868pt,-3.375pt)--++(-3.8971143170299736pt,0 pt) -- ++(1.9485571585149868pt,3.375pt);
\draw [fill=black,shift={(5.073715477070952,-10.980596108147074)},rotate=90] (0,0) ++(0 pt,2.25pt) -- ++(1.9485571585149868pt,-3.375pt)--++(-3.8971143170299736pt,0 pt) -- ++(1.9485571585149868pt,3.375pt);
\draw [fill=black,shift={(0.4664631177705029,-5.998042125272213)},rotate=180] (0,0) ++(0 pt,2.25pt) -- ++(1.9485571585149868pt,-3.375pt)--++(-3.8971143170299736pt,0 pt) -- ++(1.9485571585149868pt,3.375pt);
\draw [fill=black,shift={(0.4664631177705029,-9.390992399864645)}] (0,0) ++(0 pt,2.25pt) -- ++(1.9485571585149868pt,-3.375pt)--++(-3.8971143170299736pt,0 pt) -- ++(1.9485571585149868pt,3.375pt);
\draw [fill=black,shift={(2.9747603460994085,-10.39773752170076)},rotate=270] (0,0) ++(0 pt,2.25pt) -- ++(1.9485571585149868pt,-3.375pt)--++(-3.8971143170299736pt,0 pt) -- ++(1.9485571585149868pt,3.375pt);
\draw [fill=black,shift={(5.073715477070952,-10.387319327554247)},rotate=90] (0,0) ++(0 pt,2.25pt) -- ++(1.9485571585149868pt,-3.375pt)--++(-3.8971143170299736pt,0 pt) -- ++(1.9485571585149868pt,3.375pt);
\draw [fill=black,shift={(1.24,3)},rotate=180] (0,0) ++(0 pt,2.25pt) -- ++(1.9485571585149868pt,-3.375pt)--++(-3.8971143170299736pt,0 pt) -- ++(1.9485571585149868pt,3.375pt);
\draw [fill=black,shift={(1.26,1.26)}] (0,0) ++(0 pt,2.25pt) -- ++(1.9485571585149868pt,-3.375pt)--++(-3.8971143170299736pt,0 pt) -- ++(1.9485571585149868pt,3.375pt);
\draw [fill=ffffff] (-5.922270576584174,-9.996377858404783) circle (2.5pt);
\draw [fill=ffffff] (-6.403293526854049,-10.434223647809214) circle (2.5pt);
\draw [fill=ffffff] (-6.863293526854047,-10.874223647809211) circle (2.5pt);
\draw [fill=ffffff] (-7.32294747023824,-9.998883541577689) circle (2.5pt);
\draw [fill=ffffff] (-6.583293526854048,-9.994223647809212) circle (2.5pt);
\draw [fill=ffffff] (-6.983293526854048,-9.114223647809212) circle (2.5pt);
\draw [fill=ffffff] (-6.468509508277529,-9.54535488728183) circle (2.5pt);
\draw [fill=ffffff] (5.671890322266495,-10.201419289091856) circle (2.5pt);
\draw [fill=ffffff] (6.24704859827219,-10.423776325626429) circle (2.5pt);
\draw [fill=ffffff] (5.66938463909359,-9.652674674225594) circle (2.5pt);
\draw [fill=ffffff] (6.207048598272195,-9.343776325626429) circle (2.5pt);
\draw [fill=ffffff] (5.068020677596319,-9.985930536221998) circle (2.5pt);
\draw [fill=black,shift={(1.2728196275257293,-5.988749894975701)},rotate=180] (0,0) ++(0 pt,2.25pt) -- ++(1.9485571585149868pt,-3.375pt)--++(-3.8971143170299736pt,0 pt) -- ++(1.9485571585149868pt,3.375pt);
\draw [fill=black,shift={(1.2928196275257293,-7.728749894975701)}] (0,0) ++(0 pt,2.25pt) -- ++(1.9485571585149868pt,-3.375pt)--++(-3.8971143170299736pt,0 pt) -- ++(1.9485571585149868pt,3.375pt);
\end{scriptsize}
\end{tikzpicture} 
  \caption{The 2-switch of type I.}\label{TypeISwitch}
\end{figure}
  \item[Type \; II -] We switch the vertices between the right and the left branches of  $T$ obtaining a new tree $T'$; Notice that this transformation is indeed a 2-switch preserving the degree sequence ($d:=[r_1 +1, r_2 +1, 3, 2^{n-(r_1+r_2) -2},1^{r_1+r_2+1}]$). More precisely, we have a new member of $\mathfrak{F}(n)$, where the parameters are changed by $q'_1 =q_2+t-s$ (the closest to $r_1 \, P_2$) and $q'_2=q_1-t+s$ (the closest to $r_1 \, P_2$). Let us call that a $(s,t)$-2-switch of type II (see Figure~\ref{TypeIISwitch}). In order to do the 2-switch, we disconnect the edges $[u_1,u_2]$ and $[v_1,v_2]$ and reconnect $[u_1,v_2]$ and $[v_1,u_2]$.
\end{itemize}

\begin{figure}[H]
  \centering
\definecolor{rvwvcq}{rgb}{0.08235294117647059,0.396078431372549,0.7529411764705882}
\definecolor{dtsfsf}{rgb}{0.8274509803921568,0.1843137254901961,0.1843137254901961}
\definecolor{ffffff}{rgb}{1,1,1}
\begin{tikzpicture}[line cap=round,line join=round,>=triangle 45,x=1cm,y=1cm,scale=0.7, every node/.style={scale=0.8}]
\clip(-11.762222567061546,-13.104218056985475) rectangle (11.557100760720848,4.399421051165193);
\draw [line width=1pt] (5.604841723994303,-1.217642963465425)-- (6.18,-1.44);
\draw [line width=1pt] (5.602336040821398,-0.6688983485991646)-- (6.14,-0.36);
\draw [line width=1pt] (0.0030355131150047,-0.9993162365969024)-- (0.9993960521943549,-0.9996485274226641);
\draw [line width=1pt] (-5.998977049730127,-1.0021542105955705)-- (-5.001715146913821,-1.0021542105955705);
\draw [line width=1pt] (-6.48,-1.44)-- (-6.94,-1.88);
\draw [line width=1pt] (4.02,-1)-- (5.000972079324126,-1.0021542105955694);
\draw [line width=1pt] (5.000972079324126,-1.0021542105955694)-- (5.604841723994303,-1.217642963465425);
\draw [line width=1pt] (5.000972079324126,-1.0021542105955694)-- (5.602336040821398,-0.6688983485991646);
\draw [line width=1pt,dotted] (-3.999441877751704,-0.999648527422664)-- (-5.001715146913821,-1.0021542105955705);
\draw (-0.14605313300968167,-1.004621307347788) node[anchor=north west] {$u$};
\draw [line width=1pt] (-5.998977049730127,-1.0021542105955705)-- (-6.48,-1.44);
\draw [line width=1pt] (0.000008795653280872282,2.9566034858761667)-- (0.0030355131150047,2.075828704514527);
\draw [line width=1pt] (0.0030355131150047,1.3161226216218413)-- (0.000008795653280872282,0.5564165387291554);
\draw [line width=1pt,dotted] (-2.9971686085895866,-0.9971428442497587)-- (-1.9598157750067955,-0.9996485274226641);
\draw [line width=1pt] (-7.399653943384187,-1.0046598937684774)-- (-6.66,-1);
\draw [line width=1pt] (-7.06,-0.12)-- (-6.545215981423482,-0.5511312394726173);
\draw [line width=1pt] (-6.545215981423482,-0.5511312394726173)-- (-5.998977049730127,-1.0021542105955705);
\draw [line width=1pt] (-6.66,-1)-- (-5.998977049730127,-1.0021542105955705);
\draw (5.367932836151818,0.9646593959241845) node[anchor=north west] {$r_1 P_2$};
\draw (-7.404259153640635,1.3022503736279512) node[anchor=north west] {$r_2 P2$};
\draw [line width=1pt,dotted] (0.0030355131150047,2.075828704514527)-- (0.0030355131150047,1.3161226216218413);
\draw [line width=1pt] (0.0030355131150047,-0.27895748070662657)-- (0.0030355131150047,-0.9993162365969024);
\draw [line width=1pt] (-0.9600481890175837,-0.9971428442497587)-- (0.0030355131150047,-0.9993162365969024);
\draw [line width=1pt,dotted] (2.9989312241727784,-0.9996485274226641)-- (4.02,-1);
\draw [line width=1pt,dotted] (0.9993960521943549,-0.9996485274226641)-- (2.0417602521229568,-0.9996485274226641);
\draw [line width=1pt,dotted] (0.000008795653280872282,0.5564165387291554)-- (0.0030355131150047,-0.27895748070662657);
\draw (-3.437565165621393,-1.9798032627553214) node[anchor=north west] {$q_2$};
\draw (3.0610611551760885,-1.9798032627553214) node[anchor=north west] {$q_1$};
\draw (0.6697917297744177,1.9868907252639374) node[anchor=north west] {$h$};
\draw [line width=0.8pt] (-5.998722801439957,-1.993969849943333)-- (-0.9055039114889535,-2.0012627655088457);
\draw [line width=0.8pt] (1.034112644044173,-1.9864528004246156)-- (5.009814577362994,-1.9896298874580076);
\draw [line width=0.8pt] (0.40256221806254994,2.992924095416853)-- (0.40256221806254994,-0.4000261791755805);
\draw [line width=1pt] (-1.9598157750067955,-0.9996485274226641)-- (-0.9600481890175837,-0.9971428442497587);
\draw (1.485636592558517,-0.019980955711801807) node[anchor=north west] {$u_1$};
\draw (-3.0155764434916863,-0.0762461186624296) node[anchor=north west] {$v_1$};
\draw (-3.915819050701727,-0.0762461186624296) node[anchor=north west] {$v_2$};
\draw (2.8641330848488917,-0.1325112816130574) node[anchor=north west] {$u_2$};
\draw [line width=0.8pt] (2.9108594463914534,-1.4067713010116925)-- (5.009814577362994,-1.396353106865182);
\draw [line width=0.8pt] (-6.0007044977567086,-1.396353106865183)-- (-3.9572129739081756,-1.3995765833071183);
\draw [shift={(0.0008813077915958978,-0.9983956858362114)},line width=0.8pt,color=dtsfsf]  plot[domain=3.141174768114745:6.282767421704538,variable=\t]({1*2.998050178153337*cos(\t r)+0*2.998050178153337*sin(\t r)},{0*2.998050178153337*cos(\t r)+1*2.998050178153337*sin(\t r)});
\draw [shift={(-0.9788408128143735,-0.9996485274226641)},line width=0.8pt,color=rvwvcq]  plot[domain=0:3.141592653589793,variable=\t]({1*3.0206010649373303*cos(\t r)+0*3.0206010649373303*sin(\t r)},{0*3.0206010649373303*cos(\t r)+1*3.0206010649373303*sin(\t r)});
\draw (-5.153652635615533,-1.35) node[anchor=north west] {$s$};
\draw (3.9050385994355015,-1.35) node[anchor=north west] {$t$};
\draw [line width=1pt] (5.793077934508813,-10.338913238853765)-- (6.287416289098737,-10.586082416148727);
\draw [line width=1pt] (5.666236940529355,-9.65986456928823)-- (6.203900899707957,-9.350966220689065);
\draw [line width=1pt] (0.06693641282295731,-9.99028245728597)-- (1.0632969519023077,-9.99061474811173);
\draw [line width=1pt] (-5.935076150022173,-9.993120431284636)-- (-4.937814247205868,-9.993120431284636);
\draw [line width=1pt] (-6.463197273214205,-10.258057169168001)-- (-6.821740494734792,-10.454402266667373);
\draw [line width=1pt] (4.083900899707954,-9.990966220689065)-- (5.0648729790320814,-9.993120431284636);
\draw [line width=1pt] (5.0648729790320814,-9.993120431284636)-- (5.793077934508813,-10.338913238853765);
\draw [line width=1pt] (5.0648729790320814,-9.993120431284636)-- (5.666236940529355,-9.65986456928823);
\draw [line width=1pt,dotted] (-3.9355409780437496,-9.99061474811173)-- (-4.937814247205868,-9.993120431284636);
\draw (-0.14605313300968167,-10.007047379448233) node[anchor=north west] {$u$};
\draw [line width=1pt] (-5.935076150022173,-9.993120431284636)-- (-6.463197273214205,-10.258057169168001);
\draw [line width=1pt] (0.0639096953612342,-6.034362734812899)-- (0.06693641282295731,-6.915137516174538);
\draw [line width=1pt] (0.06693641282295731,-7.674843599067225)-- (0.0639096953612342,-8.434549681959911);
\draw [line width=1pt,dotted] (-2.9332677088816332,-9.988109064938824)-- (-1.8959148752988424,-9.99061474811173);
\draw [line width=1pt] (-6.8814976983215566,-9.643411646561276)-- (-6.4888075033228185,-9.78853628384342);
\draw [line width=1pt] (-6.4888075033228185,-9.78853628384342)-- (-5.935076150022173,-9.993120431284636);
\draw (-7.094800757412183,-8.375357653880027) node[anchor=north west] {$r_1 P_2$};
\draw (5.9024518841827796,-8.122164420602202) node[anchor=north west] {$r_2 P2$};
\draw [line width=1pt,dotted] (0.06693641282295731,-6.915137516174538)-- (0.06693641282295731,-7.674843599067225);
\draw [line width=1pt] (0.06693641282295731,-9.269923701395692)-- (0.06693641282295731,-9.99028245728597);
\draw [line width=1pt] (-0.8961472893096307,-9.988109064938824)-- (0.06693641282295731,-9.99028245728597);
\draw [line width=1pt,dotted] (3.0628321238807334,-9.99061474811173)-- (4.083900899707954,-9.990966220689065);
\draw [line width=1pt,dotted] (1.0632969519023077,-9.99061474811173)-- (2.105661151830911,-9.99061474811173);
\draw [line width=1pt,dotted] (0.0639096953612342,-8.434549681959911)-- (0.06693641282295731,-9.269923701395692);
\draw (-3.3531674211954514,-11.282229334855768) node[anchor=north west] {$q'_1 =q_2+t-s$};
\draw (3.117326318126716,-11.282229334855768) node[anchor=north west] {$q'_2=q_1-t+s$};
\draw (0.7260568927250453,-6.687402765361194) node[anchor=north west] {$h$};
\draw [line width=0.8pt] (-5.934821901732002,-10.984936070632399)-- (-0.8416030117810005,-10.99222898619791);
\draw [line width=0.8pt] (1.0980135437521261,-10.97741902111368)-- (5.073715477070952,-10.980596108147074);
\draw [line width=0.8pt] (0.4664631177705029,-5.998042125272213)-- (0.4664631177705029,-9.390992399864645);
\draw [line width=1pt] (-1.8959148752988424,-9.99061474811173)-- (-0.8961472893096307,-9.988109064938824);
\draw (1.5419017555091445,-9.022407027812248) node[anchor=north west] {$u_1$};
\draw (-2.959311280541059,-9.078672190762875) node[anchor=north west] {$v_1$};
\draw (2.8360005033735782,-8.994274446336933) node[anchor=north west] {$v_2$};
\draw (-4.08461453955361,-8.938009283386306) node[anchor=north west] {$u_2$};
\draw [line width=0.8pt] (2.9747603460994085,-10.39773752170076)-- (5.073715477070952,-10.387319327554247);
\draw [line width=0.8pt] (-5.936803598048755,-10.38731932755425)-- (-3.893312074200222,-10.390542803996185);
\draw (-5.069254891189591,-10.250782216497606) node[anchor=north west] {$t$};
\draw (3.961303762386129,-10.335179960923546) node[anchor=north west] {$s$};
\draw [line width=1pt] (5.835062372051567,-9.970936966878082)-- (6.5747163154357535,-9.966277073109605);
\draw [line width=1pt] (5.0648729790320814,-9.993120431284636)-- (5.835062372051567,-9.970936966878082);
\draw [line width=1pt,color=dtsfsf] (-3.9355409780437496,-9.99061474811173)-- (-2.9332677088816332,-9.988109064938824);
\draw [line width=1pt,color=rvwvcq] (2.105661151830911,-9.99061474811173)-- (3.0628321238807334,-9.99061474811173);
\draw (3.342386969929226,2.2024929808379956) node[anchor=north west] {$T$};
\draw (3.848773436484874,-6.65927018388588) node[anchor=north west] {$T'$};
\begin{scriptsize}
\draw [fill=ffffff] (-3.999441877751704,-0.999648527422664) circle (2.5pt);
\draw [fill=ffffff] (-5.001715146913821,-1.0021542105955705) circle (2.5pt);
\draw [fill=ffffff] (5.604841723994303,-1.217642963465425) circle (2.5pt);
\draw [fill=ffffff] (6.18,-1.44) circle (2.5pt);
\draw [fill=ffffff] (0.0030355131150047,-0.9993162365969024) circle (2.5pt);
\draw [fill=ffffff] (5.602336040821398,-0.6688983485991646) circle (2.5pt);
\draw [fill=ffffff] (6.14,-0.36) circle (2.5pt);
\draw [fill=ffffff] (0.9993960521943549,-0.9996485274226641) circle (2.5pt);
\draw [fill=ffffff] (4.02,-1) circle (2.5pt);
\draw [fill=ffffff] (-5.998977049730127,-1.0021542105955705) circle (2.5pt);
\draw [fill=ffffff] (5.000972079324126,-1.0021542105955694) circle (2.5pt);
\draw [fill=ffffff] (-6.48,-1.44) circle (2.5pt);
\draw [fill=ffffff] (-6.94,-1.88) circle (2.5pt);
\draw [fill=ffffff] (0.000008795653280872282,2.9566034858761667) circle (2.5pt);
\draw [fill=ffffff] (0.0030355131150047,2.075828704514527) circle (2.5pt);
\draw [fill=ffffff] (0.0030355131150047,1.3161226216218413) circle (2.5pt);
\draw [fill=ffffff] (0.000008795653280872282,0.5564165387291554) circle (2.5pt);
\draw [fill=ffffff] (0.0030355131150047,-0.27895748070662657) circle (2.5pt);
\draw [fill=ffffff] (-2.9971686085895866,-0.9971428442497587) circle (2.5pt);
\draw [fill=ffffff] (-1.9598157750067955,-0.9996485274226641) circle (2.5pt);
\draw [fill=ffffff] (-7.399653943384187,-1.0046598937684774) circle (2.5pt);
\draw [fill=ffffff] (-6.66,-1) circle (2.5pt);
\draw [fill=ffffff] (-7.06,-0.12) circle (2.5pt);
\draw [fill=ffffff] (-6.545215981423482,-0.5511312394726173) circle (2.5pt);
\draw [fill=ffffff] (2.9989312241727784,-0.9996485274226641) circle (2.5pt);
\draw [fill=ffffff] (2.0417602521229568,-0.9996485274226641) circle (2.5pt);
\draw [fill=ffffff] (-0.9600481890175837,-0.9971428442497587) circle (2.5pt);
\draw [fill=black,shift={(-5.998722801439957,-1.993969849943333)},rotate=270] (0,0) ++(0 pt,2.25pt) -- ++(1.9485571585149868pt,-3.375pt)--++(-3.8971143170299736pt,0 pt) -- ++(1.9485571585149868pt,3.375pt);
\draw [fill=black,shift={(-0.9055039114889535,-2.0012627655088457)},rotate=90] (0,0) ++(0 pt,2.25pt) -- ++(1.9485571585149868pt,-3.375pt)--++(-3.8971143170299736pt,0 pt) -- ++(1.9485571585149868pt,3.375pt);
\draw [fill=black,shift={(1.034112644044173,-1.9864528004246156)},rotate=270] (0,0) ++(0 pt,2.25pt) -- ++(1.9485571585149868pt,-3.375pt)--++(-3.8971143170299736pt,0 pt) -- ++(1.9485571585149868pt,3.375pt);
\draw [fill=black,shift={(5.009814577362994,-1.9896298874580076)},rotate=90] (0,0) ++(0 pt,2.25pt) -- ++(1.9485571585149868pt,-3.375pt)--++(-3.8971143170299736pt,0 pt) -- ++(1.9485571585149868pt,3.375pt);
\draw [fill=black,shift={(0.40256221806254994,2.992924095416853)},rotate=180] (0,0) ++(0 pt,2.25pt) -- ++(1.9485571585149868pt,-3.375pt)--++(-3.8971143170299736pt,0 pt) -- ++(1.9485571585149868pt,3.375pt);
\draw [fill=black,shift={(0.40256221806254994,-0.4000261791755805)}] (0,0) ++(0 pt,2.25pt) -- ++(1.9485571585149868pt,-3.375pt)--++(-3.8971143170299736pt,0 pt) -- ++(1.9485571585149868pt,3.375pt);
\draw [fill=black,shift={(2.9108594463914534,-1.4067713010116925)},rotate=270] (0,0) ++(0 pt,2.25pt) -- ++(1.9485571585149868pt,-3.375pt)--++(-3.8971143170299736pt,0 pt) -- ++(1.9485571585149868pt,3.375pt);
\draw [fill=black,shift={(5.009814577362994,-1.396353106865182)},rotate=90] (0,0) ++(0 pt,2.25pt) -- ++(1.9485571585149868pt,-3.375pt)--++(-3.8971143170299736pt,0 pt) -- ++(1.9485571585149868pt,3.375pt);
\draw [fill=black,shift={(-6.0007044977567086,-1.396353106865183)},rotate=270] (0,0) ++(0 pt,2.25pt) -- ++(1.9485571585149868pt,-3.375pt)--++(-3.8971143170299736pt,0 pt) -- ++(1.9485571585149868pt,3.375pt);
\draw [fill=black,shift={(-3.9572129739081756,-1.3995765833071183)},rotate=90] (0,0) ++(0 pt,2.25pt) -- ++(1.9485571585149868pt,-3.375pt)--++(-3.8971143170299736pt,0 pt) -- ++(1.9485571585149868pt,3.375pt);
\draw [fill=ffffff] (-3.9355409780437496,-9.99061474811173) circle (2.5pt);
\draw [fill=ffffff] (-4.937814247205868,-9.993120431284636) circle (2.5pt);
\draw [fill=ffffff] (5.793077934508813,-10.338913238853765) circle (2.5pt);
\draw [fill=ffffff] (6.287416289098737,-10.586082416148727) circle (2.5pt);
\draw [fill=ffffff] (0.06693641282295731,-9.99028245728597) circle (2.5pt);
\draw [fill=ffffff] (5.666236940529355,-9.65986456928823) circle (2.5pt);
\draw [fill=ffffff] (6.203900899707957,-9.350966220689065) circle (2.5pt);
\draw [fill=ffffff] (1.0632969519023077,-9.99061474811173) circle (2.5pt);
\draw [fill=ffffff] (4.083900899707954,-9.990966220689065) circle (2.5pt);
\draw [fill=ffffff] (-5.935076150022173,-9.993120431284636) circle (2.5pt);
\draw [fill=ffffff] (5.0648729790320814,-9.993120431284636) circle (2.5pt);
\draw [fill=ffffff] (-6.463197273214205,-10.258057169168001) circle (2.5pt);
\draw [fill=ffffff] (-6.821740494734792,-10.454402266667373) circle (2.5pt);
\draw [fill=ffffff] (0.0639096953612342,-6.034362734812899) circle (2.5pt);
\draw [fill=ffffff] (0.06693641282295731,-6.915137516174538) circle (2.5pt);
\draw [fill=ffffff] (0.06693641282295731,-7.674843599067225) circle (2.5pt);
\draw [fill=ffffff] (0.0639096953612342,-8.434549681959911) circle (2.5pt);
\draw [fill=ffffff] (0.06693641282295731,-9.269923701395692) circle (2.5pt);
\draw [fill=ffffff] (-2.9332677088816332,-9.988109064938824) circle (2.5pt);
\draw [fill=ffffff] (-1.8959148752988424,-9.99061474811173) circle (2.5pt);
\draw [fill=ffffff] (-6.8814976983215566,-9.643411646561276) circle (2.5pt);
\draw [fill=ffffff] (-6.4888075033228185,-9.78853628384342) circle (2.5pt);
\draw [fill=ffffff] (3.0628321238807334,-9.99061474811173) circle (2.5pt);
\draw [fill=ffffff] (2.105661151830911,-9.99061474811173) circle (2.5pt);
\draw [fill=ffffff] (-0.8961472893096307,-9.988109064938824) circle (2.5pt);
\draw [fill=black,shift={(-5.934821901732002,-10.984936070632399)},rotate=270] (0,0) ++(0 pt,2.25pt) -- ++(1.9485571585149868pt,-3.375pt)--++(-3.8971143170299736pt,0 pt) -- ++(1.9485571585149868pt,3.375pt);
\draw [fill=black,shift={(-0.8416030117810005,-10.99222898619791)},rotate=90] (0,0) ++(0 pt,2.25pt) -- ++(1.9485571585149868pt,-3.375pt)--++(-3.8971143170299736pt,0 pt) -- ++(1.9485571585149868pt,3.375pt);
\draw [fill=black,shift={(1.0980135437521261,-10.97741902111368)},rotate=270] (0,0) ++(0 pt,2.25pt) -- ++(1.9485571585149868pt,-3.375pt)--++(-3.8971143170299736pt,0 pt) -- ++(1.9485571585149868pt,3.375pt);
\draw [fill=black,shift={(5.073715477070952,-10.980596108147074)},rotate=90] (0,0) ++(0 pt,2.25pt) -- ++(1.9485571585149868pt,-3.375pt)--++(-3.8971143170299736pt,0 pt) -- ++(1.9485571585149868pt,3.375pt);
\draw [fill=black,shift={(0.4664631177705029,-5.998042125272213)},rotate=180] (0,0) ++(0 pt,2.25pt) -- ++(1.9485571585149868pt,-3.375pt)--++(-3.8971143170299736pt,0 pt) -- ++(1.9485571585149868pt,3.375pt);
\draw [fill=black,shift={(0.4664631177705029,-9.390992399864645)}] (0,0) ++(0 pt,2.25pt) -- ++(1.9485571585149868pt,-3.375pt)--++(-3.8971143170299736pt,0 pt) -- ++(1.9485571585149868pt,3.375pt);
\draw [fill=black,shift={(2.9747603460994085,-10.39773752170076)},rotate=270] (0,0) ++(0 pt,2.25pt) -- ++(1.9485571585149868pt,-3.375pt)--++(-3.8971143170299736pt,0 pt) -- ++(1.9485571585149868pt,3.375pt);
\draw [fill=black,shift={(5.073715477070952,-10.387319327554247)},rotate=90] (0,0) ++(0 pt,2.25pt) -- ++(1.9485571585149868pt,-3.375pt)--++(-3.8971143170299736pt,0 pt) -- ++(1.9485571585149868pt,3.375pt);
\draw [fill=black,shift={(-5.936803598048755,-10.38731932755425)},rotate=270] (0,0) ++(0 pt,2.25pt) -- ++(1.9485571585149868pt,-3.375pt)--++(-3.8971143170299736pt,0 pt) -- ++(1.9485571585149868pt,3.375pt);
\draw [fill=black,shift={(-3.893312074200222,-10.390542803996185)},rotate=90] (0,0) ++(0 pt,2.25pt) -- ++(1.9485571585149868pt,-3.375pt)--++(-3.8971143170299736pt,0 pt) -- ++(1.9485571585149868pt,3.375pt);
\draw [fill=ffffff] (5.835062372051567,-9.970936966878082) circle (2.5pt);
\draw [fill=ffffff] (6.5747163154357535,-9.966277073109605) circle (2.5pt);
\end{scriptsize}
\end{tikzpicture} 
  \caption{The 2-switch of type II.}\label{TypeIISwitch}
\end{figure}
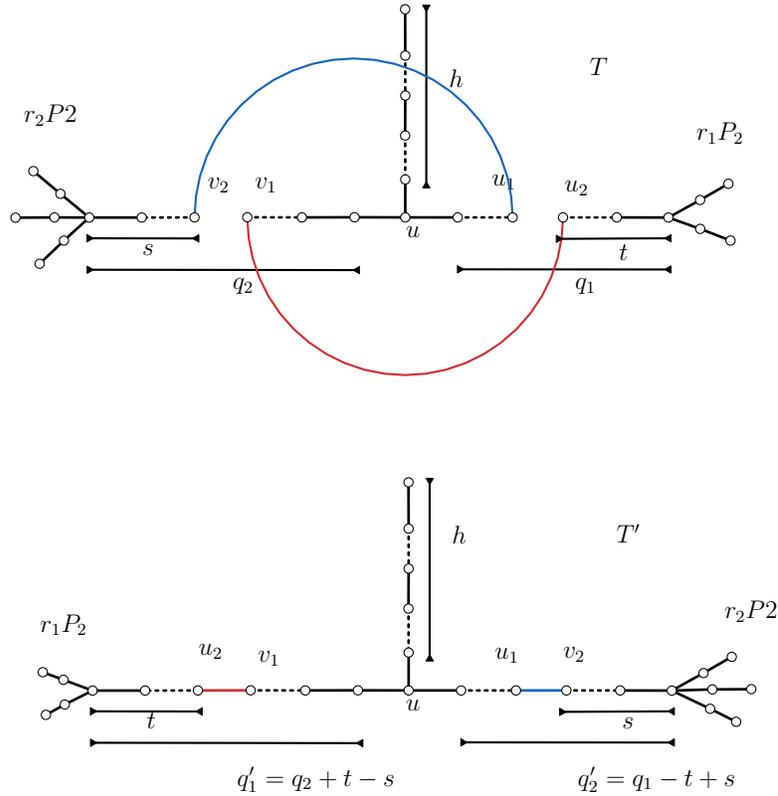

We can always assume that $\lambda= \rho(T) >\sqrt{r_2+2}$ for all $T$ in $\mathfrak{F}(n)$ because the sun $S_{r_2+1}$, whose spectral radius is $\sqrt{r_2+2}$, is a proper subgraph of $T$, since $q_2 \geq 2$. But in fact, we need a larger lower bound for $\lambda =\rho(T)$ for our results.

Let $T_j:=[\underbrace{2,2,...,2}_{r_2 \text{ times}},j]$ be the starlike tree composed by $r_2$ legs of $P_2$ and a path of length $j$, with $j\geq 3$ as Figure \ref{fig:T_j} illustrates.
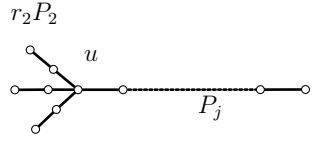
\begin{figure}[H]
  \centering
\definecolor{ffffff}{rgb}{1,1,1}
\definecolor{cqcqcq}{rgb}{0.7529411764705882,0.7529411764705882,0.7529411764705882}
\begin{tikzpicture}[line cap=round,line join=round,>=triangle 45,x=1cm,y=1cm,scale=0.6, every node/.style={scale=0.8}]
\clip(-8.86,-2.2) rectangle (8.86,1.5);
\draw [line width=1pt] (-1.2389770497301233,-0.90215421059557)-- (-0.24171514691381868,-0.90215421059557);
\draw [line width=1pt] (-1.72,-1.34)-- (-2.18,-1.78);
\draw [line width=1pt] (-1.2389770497301233,-0.90215421059557)-- (-1.72,-1.34);
\draw [line width=1pt] (-2.639653943384183,-0.9046598937684769)-- (-1.9,-0.9);
\draw [line width=1pt] (-2.3,-0.02)-- (-1.7852159814234794,-0.4511312394726168);
\draw [line width=1pt] (-1.7852159814234794,-0.4511312394726168)-- (-1.2389770497301233,-0.90215421059557);
\draw [line width=1pt] (-1.9,-0.9)-- (-1.2389770497301233,-0.90215421059557);
\draw (-2.92,1.22) node[anchor=north west] {$r_2 P_2$};
\draw [line width=1pt] (2.800184224993207,-0.8996485274226635)-- (3.799951810982418,-0.8971428442497582);
\draw [line width=1pt,dash pattern=on 1pt off 1pt] (-0.24171514691381868,-0.90215421059557)-- (2.800184224993207,-0.8996485274226635);
\draw (1.22,-0.84) node[anchor=north west] {$P_j$};
\draw (-1.3,0.18) node[anchor=north west] {$u$};
\begin{scriptsize}
\draw [fill=ffffff] (-0.24171514691381868,-0.90215421059557) circle (2.5pt);
\draw [fill=ffffff] (-1.2389770497301233,-0.90215421059557) circle (2.5pt);
\draw [fill=ffffff] (-1.72,-1.34) circle (2.5pt);
\draw [fill=ffffff] (-2.18,-1.78) circle (2.5pt);
\draw [fill=ffffff] (2.800184224993207,-0.8996485274226635) circle (2.5pt);
\draw [fill=ffffff] (-2.639653943384183,-0.9046598937684769) circle (2.5pt);
\draw [fill=ffffff] (-1.9,-0.9) circle (2.5pt);
\draw [fill=ffffff] (-2.3,-0.02) circle (2.5pt);
\draw [fill=ffffff] (-1.7852159814234794,-0.4511312394726168) circle (2.5pt);
\draw [fill=ffffff] (3.799951810982418,-0.8971428442497582) circle (2.5pt);
\end{scriptsize}
\end{tikzpicture} 
  \caption{The graph $T_j$. }\label{fig:T_j}
\end{figure}

\begin{theorem}\label{th:T_j} Let $T \in \mathfrak{F}(n)$. If $j \geq 3$, then $$\rho(T)> \rho(T_j).$$
\end{theorem}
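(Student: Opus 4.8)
The plan is to bound every $\rho(T_j)$ below a single \emph{finite} quantity that $\rho(T)$ strictly exceeds. Let $\rho_\infty$ be the spectral radius of the infinite graph obtained by attaching a one-sided infinite ray to the center of the sun $S_{r_2}$. Since each $T_j$ is a finite proper subgraph of that infinite graph and $T_j\subsetneq T_{j+1}$, the sequence $\rho(T_j)$ is strictly increasing with $\rho(T_j)\nearrow\rho_\infty$; in particular $\rho(T_j)<\rho_\infty$ for every finite $j$, so it suffices to prove the single inequality $\rho(T)>\rho_\infty$. First I would make $\rho_\infty$ explicit. Writing a geometrically decaying Perron entry $\mu^{k}$ along the ray forces $\rho_\infty\mu=1+\mu^2$, hence $\rho_\infty-\mu=1/\mu$, while the equation at the center gives $\rho_\infty-\mu=\frac{r_2\rho_\infty}{\rho_\infty^2-1}$. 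Combining these produces the only analytic input I need, namely the identity $\rho_\infty^2-1=r_2(1+\mu^2)=r_2\rho_\infty\mu$.

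Next I would pass to a convenient proper subgraph of $T$. Deleting the pendant path $P_h$ at the starlike vertex $u$ leaves the \emph{dumbbell} $D$ consisting of $S_{r_2}$ and $S_{r_1}$ joined through $u$ by a path of length $q_1+q_2\ge 4$. As $h\ge 2$, $D$ is a connected proper subgraph of the tree $T$, so Perron--Frobenius gives $\rho(T)>\rho(D)$. The theorem therefore reduces to the \emph{uniform} bound $\rho(D)>\rho_\infty$, which no longer mentions $j$.

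To establish $\rho(D)>\rho_\infty$ I would apply the subinvariance (Collatz--Wielandt) criterion: if $y>0$ satisfies $A_D\,y\ge\rho_\infty\,y$ with strict inequality in some coordinate, then $\rho(D)>\rho_\infty$. I build $y$ from the Perron vector of the sun-with-ray model: put its sun values on the $S_{r_2}$ end and the ray values $\mu^{k}$ along the connecting path, so that $A_D y=\rho_\infty y$ holds with equality at every vertex of the $S_{r_2}$ side and along the interior of the path, and the equations on the $S_{r_1}$ legs hold with equality for \emph{any} value assigned to the second center $v_1$. The only two genuine constraints occur at the last path vertex and at $v_1$, and reduce (using $\rho_\infty\mu=1+\mu^2$) to
\[
y(v_1)\ \ge\ \mu^{\,q_1+q_2}\qquad\text{and}\qquad y(v_1)\ \le\ \frac{\mu^{\,q_1+q_2-1}\,(\rho_\infty^2-1)}{\rho_\infty\,(\rho_\infty^2-1-r_1)}.
\]
A feasible $y(v_1)$ exists exactly when $\mu\rho_\infty(\rho_\infty^2-1-r_1)\le\rho_\infty^2-1$, and substituting the identity $\rho_\infty^2-1=r_2\rho_\infty\mu$ collapses this to $\rho_\infty^2-1\le r_1+r_2$, i.e.\ $r_2\mu^2\le r_1$.

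Finally I would verify the \emph{strict} inequality $r_2\mu^2<r_1$, which yields genuine slack in the interval above, hence a choice of $y(v_1)$ making both constraints strict and so $\rho(D)>\rho_\infty$. From $\rho_\infty^2-1=r_2(1+\mu^2)$ one has $r_2\mu^2=\rho_\infty^2-1-r_2$, and a short monotonicity estimate shows this stays only slightly above $1$ (largest at $r_2=3$); since $r_1\ge 2$ the bound $r_2\mu^2<r_1$ holds with room to spare for all admissible $2\le r_1<r_2$. Chaining everything gives $\rho(T)>\rho(D)>\rho_\infty>\rho(T_j)$ for every $j\ge 3$. I expect the main obstacle to be precisely this last quantitative step together with the subinvariance bookkeeping at the junction vertices: one must control the ray/path ratio recurrences tightly enough to get the \emph{strict} direction rather than merely $\ge$, which is exactly the type of analytic control the comparison method requires.
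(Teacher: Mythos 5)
Your proof is correct, but it takes a genuinely different route from the paper's. The paper argues combinatorially via the Hoffman--Smith lemma (Lemma~\ref{lem:hoff}): repeatedly subdivide an edge of an internal path of $T$ (the index strictly decreases) and delete the pendant vertex of $P_h$ (proper subgraph, so it decreases again), reaching after $h$ steps a dumbbell in which the two suns are linked by a path of length $q_2+h+q_1+1$; for $3\le j$ up to that length, $T_j$ is a proper subtree of this dumbbell, and for larger $j$ one keeps subdividing the internal path to produce $H_j\supsetneq T_j$ with $\rho(T_j)<\rho(H_j)<\rho(T)$. You instead compare everything against one explicit threshold: the top eigenvalue $\rho_\infty$ of the sun-plus-ray, characterized by $\rho_\infty=\mu+\mu^{-1}$ and $\rho_\infty^2-1=r_2(1+\mu^2)$, proving $\rho(T_j)<\rho_\infty$ for every $j$ and $\rho(D)>\rho_\infty$ for the dumbbell $D\subsetneq T$ obtained by deleting $P_h$, the latter by a Collatz--Wielandt subinvariant vector whose feasibility collapses to $r_2\mu^2<r_1$. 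I checked the junction constraints and the reduction; they are right, and the final quantitative step is even easier than you suggest: your two identities combine to $\mu^2(1+\mu^2)=\tfrac{1}{r_2-1}$, hence $\mu^2<\tfrac{1}{r_2-1}$ and $r_2\mu^2<\tfrac{r_2}{r_2-1}\le\tfrac32<2\le r_1$, since $r_2\ge 3$. What the paper's route buys is brevity and zero computation, recycling a lemma it needs elsewhere anyway; what yours buys is a uniform quantitative gap $\sup_j\rho(T_j)=\rho_\infty<\rho(D)<\rho(T)$, explicit formulas, and independence from Hoffman--Smith. The one step you should tighten is the claim $\rho(T_j)\nearrow\rho_\infty$, which as stated rests on spectral theory of infinite graphs (the graph is a finite perturbation of the ray, so its essential spectrum is $[-2,2]$ and the top of the spectrum is a discrete eigenvalue with positive decaying eigenfunction). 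You can bypass that machinery entirely: define $\rho_\infty$ as the root $>2$ of your explicit equation, and run the mirror-image superinvariance argument on $T_j$ --- restricting the $\mu$-power vector to $T_j$ gives $y>0$ with $A_{T_j}y\le\rho_\infty y$, strict only at the far end of the pendant path, and pairing with the Perron vector of $T_j$ yields $\rho(T_j)<\rho_\infty$ --- which keeps the whole proof finite-dimensional and exactly dual to your argument for $D$.
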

This result may be proven by our comparison method that will be explained next, but there is a simpler proof, for which we need the following definitions and known result of Lemma \ref{lem:hoff}.

An internal path in a graph $G$, denoted by $v_1 v_2 ,\ldots v_{r-1} v_r$, is
a path beginning at $v_1$ and ending at $v_r$, where $v_1$ and $v_r$ both
have degree bigger than two, while all other vertices have degree two. The
vertices $v_1$ and $v_r$ are not necessarily distinct. We denote by $C_n$, the cycle on $n$ vertices and by $W_n$ the tree with $n$ vertices where two vertices have degree three and the distance between them is $n-5$.  The following result appears in the work by Hoffman and Smith \cite{Hoff2}.

\begin{lemma}\label{lem:hoff}
Let $G$ be a graph with $n$ vertices, $G \neq C_n, W_n$. Let $G'$ be the
graph with $n+1$ vertices obtained from $G$ by inserting a new vertex of
degree two in an edge $e$. Then
\begin{itemize}
\item[(a)] if $e$ lies on an internal path then $\lambda(G')< \lambda(G)$;
\item[(b)] if $e$ does not lie on an internal path then     $\lambda(G')>\lambda(G)$.
\end{itemize}
\end{lemma}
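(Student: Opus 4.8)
The plan is to prove the lemma for connected $G$ (the general case reduces to the component of $e$ that attains the spectral radius) by reducing the comparison of $\lambda(G)$ and $\lambda(G')$ to the sign of a single scalar built from two ``boundary resolvents'' at the endpoints of $e$, and then showing that the internal-versus-pendant dichotomy is \emph{exactly} the statement that these resolvents lie inside or outside one critical interval. Write $\lambda=\lambda(G)$, recall that $\lambda$ is a simple root of the monic characteristic polynomial $\phi_G$, and introduce the parametrization $\lambda=\beta+\beta^{-1}$ with $\beta\ge 1$ (so $\beta=1$ iff $\lambda=2$), which will organize the whole argument.

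First I would set up the algebraic engine in the bridge case, which is the one needed for the trees of $\mathfrak{F}(n)$. Writing $e=uv$ and letting $L^{+},R^{+}$ be the two components of $G-e$ rooted at $u,v$, the standard vertex/edge deletion identities give, for $t$ larger than the spectral radii of $L^{+}$ and $R^{+}$,
\[
\phi_G(t)=\phi_{L^{+}}(t)\phi_{R^{+}}(t)\bigl(1-g_u(t)g_v(t)\bigr),\qquad
\phi_{G'}(t)=\phi_{L^{+}}(t)\phi_{R^{+}}(t)\bigl(t-g_u(t)-g_v(t)\bigr),
\]
where $g_u(t)=\phi_{L^{+}-u}(t)/\phi_{L^{+}}(t)$ and $g_v(t)=\phi_{R^{+}-v}(t)/\phi_{R^{+}}(t)$ are the diagonal resolvent entries of the branches at their roots. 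On this range $g_u,g_v$ are positive and strictly decreasing, so $\lambda(G)$ is the unique solution of $g_ug_v=1$, while $\lambda(G')$ is the unique solution of $g_u+g_v=t$; moreover $g_u+g_v-t$ is strictly decreasing. Hence the entire comparison collapses to the sign of $g_u+g_v-\lambda$ evaluated at $t=\lambda$, where $g_ug_v=1$: negative means $\lambda(G')<\lambda$, positive means $\lambda(G')>\lambda$.

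The crux is a dichotomy for a single branch resolvent, which I phrase through $\nu:=1/g$ at a root. Along a degree-two path $\nu$ evolves by the M\"obius map $\nu\mapsto\lambda-1/\nu$, whose fixed points are exactly $\beta$ and $\beta^{-1}$ (with $\beta$ attracting); I would record that this map preserves both $(0,\beta)$ and $(\beta,\infty)$, and that positivity of the resolvent on a proper connected subgraph forces $\nu>0$ throughout. A pure pendant path, starting at the leaf from $\nu=\lambda>\beta$, therefore keeps $\nu>\beta$, i.e.\ $g<\beta^{-1}$. A ``bushy'' branch, one containing a vertex of degree at least three (which is precisely what an internal path forces on \emph{both} sides of $e$), instead satisfies $\nu<\beta$: at a branch vertex with at least two children, $\nu=\lambda-\sum_j 1/\nu_j\le\lambda-2/\lambda<\beta$, and this is preserved as $\nu$ is pushed back out to the root along the intervening degree-two path, giving $g>\beta^{-1}$. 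Feeding this into the sign test settles both cases: in case (a) both sides are bushy, so $g_u,g_v>\beta^{-1}$ and, by $g_ug_v=1$, also $g_u,g_v<\beta$, whence $g_u+g_v=g_u+g_u^{-1}<\beta+\beta^{-1}=\lambda$ and $\lambda(G')<\lambda(G)$; in case (b) one side is a pendant path, so some $g_v<\beta^{-1}$, forcing $g_u=g_v^{-1}>\beta$ and $g_u+g_u^{-1}>\lambda$, hence $\lambda(G')>\lambda(G)$.

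I expect the main obstacle to be the lower bound $\nu>\beta^{-1}$ (equivalently $g<\beta$) for a bushy branch, since the combination rule at a branch vertex yields only $\nu<\beta$ directly; the clean way around it is to avoid arguing branch-by-branch and instead use the relation $g_ug_v=1$, so that $g_v>\beta^{-1}$ on the second bushy side hands back $g_u<\beta$ for free. Two remaining points are strictness and the excluded graphs, which correspond exactly to the degenerate value $\beta=1$, i.e.\ $\lambda=2$: a graph carrying an internal path but different from $W_n$ (and, for the cyclic analogue, from $C_n$) has $\lambda>2$, so $\beta>1$ and all the inequalities above are strict, while any graph with $\lambda<2$ carries no internal path and falls automatically under case (b), since $g_u+g_u^{-1}\ge 2>\lambda$ forces an increase. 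Finally, when $e$ lies on a cycle the product splitting of $\phi_G$ fails and one must expand instead through the resolvent of $G-e$, where a cross term $R_{uv}(t)$ appears (nonzero exactly when $e$ is not a bridge); incorporating this term while keeping the monotonicity argument intact is the additional technical work required for the fully general statement, and is the step carried out by Hoffman and Smith \cite{Hoff2}.
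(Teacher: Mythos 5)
You cannot be compared against the paper's own proof here, because the paper does not prove this statement: it is quoted as a known result of Hoffman and Smith, with the citation \cite{Hoff2} standing in for the argument. Judged on its own, your proposal is an essentially correct, self-contained proof in the setting where the paper actually invokes the lemma (trees, where every edge is a bridge and every branch is a tree). The splittings $\phi_G=\phi_{L^{+}}\phi_{R^{+}}(1-g_ug_v)$ and $\phi_{G'}=\phi_{L^{+}}\phi_{R^{+}}(t-g_u-g_v)$ are correct, the monotonicity of the resolvents reduces everything to the position of $g_u(\lambda)$ relative to the interval $(\beta^{-1},\beta)$, and your M\"obius recurrence $\nu\mapsto\lambda-1/\nu$ with fixed points $\beta^{\pm1}$ is, up to a sign change, exactly the recurrence $\varphi(t)=-\lambda-1/t$ with fixed points $\theta,\theta^{-1}$ that the paper studies in Section \ref{sec:analytical properties} (your $\beta$ is $-\theta$), so your engine is in fact the same one the authors use elsewhere. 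Your handling of strictness through the Smith graphs (internal path plus $G\neq C_n,W_n$ forces $\lambda>2$, hence $\beta>1$) and of $\lambda<2$ via $g_u+g_u^{-1}\ge 2>\lambda$ is also sound.

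There is, however, one genuine soft spot if you intend the bridge case to cover general graphs rather than trees. Your bushy-branch bound $\nu\le\lambda-2/\lambda<\beta$ rests on the child decomposition $1/g_y=\lambda-\sum_j g_{y_j}$ together with the implicit estimate $g_{y_j}\ge 1/\lambda$ (itself proved by the same tree recursion, since $\nu\le\lambda$ propagates from the leaves). This vertex-deletion identity acquires extra cycle terms whenever the branch contains a cycle, and that can happen even when $e$ itself is a bridge — e.g.\ $e$ on the path joining two triangles, where both sides are bushy and both are cyclic, which is a case (a) configuration. Your closing deferral to \cite{Hoff2} covers only edges lying on cycles, so this intermediate situation is left uncovered as written. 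It is patchable inside your framework: for $t>\rho(L^{+})$, $g_u(t)$ is the generating function of closed walks at $u$ in $L^{+}$, hence dominates the corresponding quantity for any subgraph through $u$; comparing with the fork consisting of the $u$--$y$ path plus two extra edges at the first vertex $y$ of degree at least three yields $g_u(\lambda)>\beta^{-1}$ in general. One last small point: the lemma requires $G$ connected (for $G=C_3\cup P_m$ with $e$ a pendant edge of the path, subdivision leaves $\lambda=2$ unchanged, contradicting (b)), so your opening reduction to "the component attaining the spectral radius" should simply be replaced by a connectivity hypothesis, as in \cite{Hoff2}. For every application in this paper — all on trees — your argument is complete and correct.
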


\begin{proof} (Theorem~\ref{th:T_j}) Let $T =[h,q_1,q_2]~\in \mathfrak{F}(n)$ and $3 \leq j$.

Let $H'_1$ be the tree obtained by adding an edge in the internal path starting from $u$ to the vertex $r_1P_2$. We notice that $T \not =W_n$, as it has a vertex $u$ of degre $r_2+1 > 3$, and hence, by Lemma \ref{lem:hoff}, $\rho(H'_1) < \rho(T)$ . We now remove the pendant vertex of the path $P_h$ in $T$, obtaining a tree $H_1$, a proper subtree of $H'_1$. If follows that $\rho(H_1) < \rho(T)$. We apply this process successively $h$ times, obtaining a tree $H_h$ composed by the starlikes $r_2P_2$ and $r_1P_2$ linked in their centers by a path of length $q_2+h+q_1+1$ such that $\rho(H_h) < \rho(T).$

Now, if $3\leq j \leq q_2+h+q_1+1$, we see that $T_j$ is a proper subtree of $H_h$ and therefore, $\rho(T_j) < \rho(H_h) < \rho(T)$.

For $j > q_2+h+q_1+1$, we keep adding edges in the internal path starting at $r_2P_2$ and ending at $r_1P_2$ until the length of the path is at least $j$, obtaining  a tree $H_j$. This operation, according the Lemma \ref{lem:hoff}, decreases the spectral radius. As $T_j$ is a proper subtree of $H_j$, it follows that $\rho(T_j) < \rho(H_j) < \rho(T)$.\\
\end{proof}

\subsection{Our tool}

We would like to recall the algorithm {\bf Diagonalize($T$, $\alpha$)}.  For a tree~$T$ and a real number~$\alpha$ this algorithm outputs a sequence $(d_v)_{v\in V(T)}$.\\

\vspace*{.5cm}

{\bf Algorithm Diagonalize($T$, $\alpha$)}
\begin{itemize}
\item[{\bf 1.}] List the vertices of~$T$ in postorder as $v_1,\dots,v_n$.

\item[{\bf 2.}] For each $i=1,\dots,n$
                 set $d_{v_i}\leftarrow\alpha$.

\item[{\bf 3.}] For each $i=1,\dots,n$:

\item[{\bf 4.}] \hspace*{18pt} If $v_i$ has a child $v_j$ such that $d_{v_j}=0$,\\
\hspace*{18pt} then \\
\hspace*{25pt} set $d_{v_i}\leftarrow -\frac12$ and $d_{v_j}\leftarrow 2$. \\ \hspace*{25pt} Further, if $v_i$ has a parent $v_p$, remove the edge $v_pv_i$ from $T$.

\item[{\bf 5.}] \hspace*{18pt} Otherwise,
set $d_{v_i}\leftarrow d_{v_i} - \sum d_{v_j}^{-1}$,
summing over all children $v_j$ of $v_i$.
\end{itemize}

The above algorithm of Jacobs and Trevisan~\cite{JaTr} can be used to
estimate eigenvalues of a given tree. It is based on diagonalization of the
matrix $A(T)+ \alpha I$, where $A(T)$ is the adjacency matrix of~$T$ and
$\alpha$ is a real number. Its nice feature is that it can be easily executed
manually directly on the drawing of a tree. The authors proved that this
algorithm diagonalizes $A(T)+\alpha I$ and, additionally, the following result holds.

\begin{theorem} \label{th-diagonalization}
For a tree $T$, let $(d_v)_{v\in V(T)}$ be the values produced by {\bf Diagonalize}$ (T, -\alpha)$. Then the diagonal matrix $D=\mathrm{diag}(d_v)_{v\in V(T)}$ is congruent to $A(T)+\alpha I$, hence
the number of $($ positive $|$ negative $|$ zero $)$ entries in
$(d_v)_{v\in V(T)}$ is equal to the number of eigenvalues of $A(T)$ that are
$($ greater than $\alpha$ $|$ smaller than $\alpha$ $|$ equal to $\alpha$
$)$.
\end{theorem}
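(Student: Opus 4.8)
The plan is to recognise \textbf{Diagonalize} as symmetric Gaussian elimination applied to the shifted matrix $M:=A(T)+\alpha I$ of the statement, performed with the pivots taken in the postorder listing $v_1,\dots,v_n$. Every elementary symmetric operation (clearing an off-diagonal entry by a simultaneous row and column operation) replaces a symmetric matrix $B$ by $P^{\top}BP$ with $P$ invertible, hence preserves congruence; thus if the algorithm only performs such operations, the output $D=\operatorname{diag}(d_v)$ is congruent to $M$. Sylvester's law of inertia --- congruent real symmetric matrices have equal numbers of positive, negative, and zero eigenvalues --- then turns the signs of the entries $d_v$ into the advertised counts of eigenvalues of $A(T)$ relative to $\alpha$, since the eigenvalues of the shifted matrix are exactly the eigenvalues of $A(T)$ translated by $\alpha$. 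So the entire task reduces to checking that each pass of the loop in Step 3 is realised by such congruences and that the recorded numbers are the resulting pivots.

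Here the decisive structural fact is that $T$ is a tree. First I would root $T$ according to the chosen postorder and observe that, when $v_i$ is about to be processed, all of its descendants have already been eliminated, so that among the not-yet-eliminated vertices the only neighbour of $v_i$ is its parent $v_p$. Consequently eliminating the subtree below $v_i$ creates \emph{no} off-diagonal fill-in: the sole effect of pivoting on a child $v_j$ is to subtract $d_{v_j}^{-1}$ (the square of the unit edge weight divided by the pivot) from the diagonal entry of $v_i$. Summing over the children gives exactly the update $d_{v_i}\leftarrow d_{v_i}-\sum d_{v_j}^{-1}$ of Step 5, and leaves $v_i$ coupled only to $v_p$ by the original unit weight, ready to act as the next pivot. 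This ``no fill-in'' phenomenon is what collapses the general elimination into the simple diagonal bookkeeping of the algorithm.

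The step I expect to demand the most care is the zero-pivot case governed by Step 4. If a child $v_j$ has $d_{v_j}=0$ it cannot be used as a pivot, so instead I would pivot on the $2\times 2$ block supported on $\{v_j,v_i\}$, namely $\left(\begin{smallmatrix}0&1\\1&d_{v_i}\end{smallmatrix}\right)$, which is invertible with determinant $-1$ and therefore has inertia $(1,1,0)$ no matter what $d_{v_i}$ is; this is precisely why the two entries may be overwritten by the fixed pair $\{2,-\tfrac12\}$, which carries the same inertia. A direct Schur-complement computation then shows that eliminating this invertible block adds $0$ to the diagonal of $v_p$ and produces no fill-in elsewhere, which is exactly what legitimises deleting the edge $v_pv_i$. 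I would verify that this remains valid in the presence of further children of $v_i$ of either kind, so that Step 4 is correct in full generality, including the degenerate subcase in which $v_i$ has several zero-diagonal children (each surplus one simply survives as an isolated zero entry, contributing a zero eigenvalue).

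Finally I would organise these observations as an induction on the number of processed vertices, with the invariant that after the first $k$ steps the matrix $M$ has been brought, by congruence, into the direct sum of the already recorded entries $d_{v_1},\dots,d_{v_k}$ and of the shifted adjacency matrix of the forest spanned by the remaining vertices, the latter carrying on its top vertices the accumulated diagonal corrections. Taking $k=n$ yields $D\cong M$, and the eigenvalue counts follow from Sylvester's law as above.
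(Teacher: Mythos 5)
The paper itself does not prove Theorem~\ref{th-diagonalization}: it is quoted from Jacobs and Trevisan \cite{JaTr}, and your proposal is essentially a faithful reconstruction of the proof given there --- symmetric Gaussian elimination realized as a sequence of congruences, the postorder/tree structure guaranteeing no fill-in so that elimination collapses to the diagonal bookkeeping of Step 5, the invertible block $\left(\begin{smallmatrix}0&1\\1&d\end{smallmatrix}\right)$ of inertia $(1,1,0)$ with vanishing Schur-complement correction justifying Step 4 and the deletion of the edge to the parent, and Sylvester's law of inertia converting the signs of the pivots into eigenvalue counts. Two small repairs: (i) your closing induction invariant is stated too strongly, since a processed child whose current value is $0$ is \emph{not} yet decoupled --- it remains attached to its unprocessed parent and its recorded value may later be overwritten by $2$ --- so the invariant must keep such dangling zero-diagonal vertices inside the unprocessed block (exactly as your own Step-4 analysis treats them) rather than listing them among the split-off entries; and (ii) with input $-\alpha$ the pivots actually diagonalize $A(T)-\alpha I$ (the ``$A(T)+\alpha I$'' in the statement is a sign slip, as only the former is consistent with the stated eigenvalue counts relative to $\alpha$), a discrepancy your shift-agnostic phrasing glosses over but which changes nothing in the argument.
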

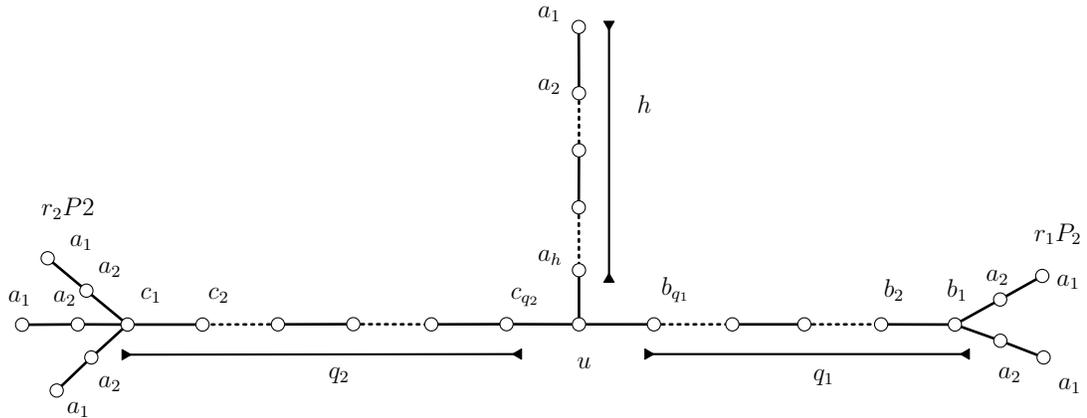
\begin{figure}[H]
  \centering
\definecolor{ffffff}{rgb}{1,1,1}
\begin{tikzpicture}[line cap=round,line join=round,>=triangle 45,x=1cm,y=1cm,scale=1, every node/.style={scale=0.8}]
\clip(-9,-3.0) rectangle (9,3.3);
\draw [line width=1pt] (5.604841723994303,-1.217642963465425)-- (6.18,-1.44);
\draw [line width=1pt] (5.602336040821398,-0.6688983485991646)-- (6.16206491229781,-0.35573853705499126);
\draw [line width=1pt] (0.0030355131150047,-0.9993162365969024)-- (0.9993960521943549,-0.9996485274226641);
\draw [line width=1pt] (-5.998977049730127,-1.0021542105955705)-- (-5.001715146913821,-1.0021542105955705);
\draw [line width=1pt] (-6.48,-1.44)-- (-6.94,-1.88);
\draw [line width=1pt] (4.02,-1)-- (5.000972079324126,-1.0021542105955694);
\draw [line width=1pt] (5.000972079324126,-1.0021542105955694)-- (5.604841723994303,-1.217642963465425);
\draw [line width=1pt] (5.000972079324126,-1.0021542105955694)-- (5.602336040821398,-0.6688983485991646);
\draw [line width=1pt,dotted] (-3.999441877751704,-0.999648527422664)-- (-5.001715146913821,-1.0021542105955705);
\draw (-0.14,-1.32) node[anchor=north west] {$u$};
\draw [line width=1pt] (-5.998977049730127,-1.0021542105955705)-- (-6.48,-1.44);
\draw [line width=1pt] (0.000008795653280872282,2.9566034858761667)-- (0.0030355131150047,2.075828704514527);
\draw [line width=1pt] (0.0030355131150047,1.3161226216218413)-- (0.000008795653280872282,0.5564165387291554);
\draw [line width=1pt,dotted] (-2.9971686085895866,-0.9971428442497587)-- (-1.9598157750067955,-0.9996485274226641);
\draw [line width=1pt] (-7.399653943384187,-1.0046598937684774)-- (-6.66,-1);
\draw [line width=1pt] (-7.06,-0.12)-- (-6.545215981423482,-0.5511312394726173);
\draw [line width=1pt] (-6.545215981423482,-0.5511312394726173)-- (-5.998977049730127,-1.0021542105955705);
\draw [line width=1pt] (-6.66,-1)-- (-5.998977049730127,-1.0021542105955705);
\draw (5.94,0.46) node[anchor=north west] {$r_1 P_2$};
\draw (-7.26,0.8) node[anchor=north west] {$r_2 P2$};
\draw [line width=1pt,dotted] (0.0030355131150047,2.075828704514527)-- (0.0030355131150047,1.3161226216218413);
\draw [line width=1pt] (0.0030355131150047,-0.27895748070662657)-- (0.0030355131150047,-0.9993162365969024);
\draw [line width=1pt] (2.9989312241727784,-0.9996485274226641)-- (2.0417602521229568,-0.9996485274226641);
\draw [line width=1pt] (-0.9600481890175837,-0.9971428442497587)-- (0.0030355131150047,-0.9993162365969024);
\draw [line width=1pt] (-3.999441877751704,-0.999648527422664)-- (-2.9971686085895866,-0.9971428442497587);
\draw [line width=1pt,dotted] (2.9989312241727784,-0.9996485274226641)-- (4.02,-1);
\draw [line width=1pt,dotted] (0.9993960521943549,-0.9996485274226641)-- (2.0417602521229568,-0.9996485274226641);
\draw [line width=1pt,dotted] (0.000008795653280872282,0.5564165387291554)-- (0.0030355131150047,-0.27895748070662657);
\draw (-3.44,-1.46) node[anchor=north west] {$q_2$};
\draw (3,-1.48) node[anchor=north west] {$q_1$};
\draw (0.66,2.16) node[anchor=north west] {$h$};
\draw [line width=0.8pt] (-6.031550930977896,-1.4030635182604188)-- (-0.799684465951645,-1.3955464687417012);
\draw [line width=0.8pt] (0.9192141906613855,-1.3955464687417012)-- (5.153818752871347,-1.3955464687417012);
\draw [line width=0.8pt] (0.40256221806254994,2.992924095416853)-- (0.40256221806254994,-0.4000261791755805);
\draw [line width=1pt] (-1.9598157750067955,-0.9996485274226641)-- (-0.9600481890175837,-0.9971428442497587);
\draw (-6.88,0.30) node[anchor=north west] {$a_1$};
\draw (-7.7,-0.44) node[anchor=north west] {$a_1$};
\draw (-6.92,-1.92) node[anchor=north west] {$a_1$};
\draw (-6.5,-0.06) node[anchor=north west] {$a_2$};
\draw (-7.1,-0.44) node[anchor=north west] {$a_2$};
\draw (-6.5,-1.58) node[anchor=north west] {$a_2$};
\draw (5.3,-0.16) node[anchor=north west] {$a_2$};
\draw (5.46,-1.46) node[anchor=north west] {$a_2$};
\draw (6.24,-0.22) node[anchor=north west] {$a_1$};
\draw (6.26,-1.6) node[anchor=north west] {$a_1$};
\draw (-0.66,3.34) node[anchor=north west] {$a_1$};
\draw (-0.66,2.38) node[anchor=north west] {$a_2$};
\draw (-0.66,0.1) node[anchor=north west] {$a_{h}$};
\draw (4.78,-0.28) node[anchor=north west] {$b_1$};
\draw (3.94,-0.28) node[anchor=north west] {$b_2$};
\draw (0.98,-0.26) node[anchor=north west] {$b_{q_1}$};
\draw (-1.02,-0.4) node[anchor=north west] {$c_{q_2}$};
\draw (-5.04,-0.4) node[anchor=north west] {$c_2$};
\draw (-5.94,-0.4) node[anchor=north west] {$c_1$};
\begin{scriptsize}
\draw [fill=ffffff] (-3.999441877751704,-0.999648527422664) circle (2.5pt);
\draw [fill=ffffff] (-5.001715146913821,-1.0021542105955705) circle (2.5pt);
\draw [fill=ffffff] (5.604841723994303,-1.217642963465425) circle (2.5pt);
\draw [fill=ffffff] (6.18,-1.44) circle (2.5pt);
\draw [fill=ffffff] (0.0030355131150047,-0.9993162365969024) circle (2.5pt);
\draw [fill=ffffff] (5.602336040821398,-0.6688983485991646) circle (2.5pt);
\draw [fill=ffffff] (6.16206491229781,-0.35573853705499126) circle (2.5pt);
\draw [fill=ffffff] (0.9993960521943549,-0.9996485274226641) circle (2.5pt);
\draw [fill=ffffff] (4.02,-1) circle (2.5pt);
\draw [fill=ffffff] (-5.998977049730127,-1.0021542105955705) circle (2.5pt);
\draw [fill=ffffff] (5.000972079324126,-1.0021542105955694) circle (2.5pt);
\draw [fill=ffffff] (-6.48,-1.44) circle (2.5pt);
\draw [fill=ffffff] (-6.94,-1.88) circle (2.5pt);
\draw [fill=ffffff] (0.000008795653280872282,2.9566034858761667) circle (2.5pt);
\draw [fill=ffffff] (0.0030355131150047,2.075828704514527) circle (2.5pt);
\draw [fill=ffffff] (0.0030355131150047,1.3161226216218413) circle (2.5pt);
\draw [fill=ffffff] (0.000008795653280872282,0.5564165387291554) circle (2.5pt);
\draw [fill=ffffff] (0.0030355131150047,-0.27895748070662657) circle (2.5pt);
\draw [fill=ffffff] (-2.9971686085895866,-0.9971428442497587) circle (2.5pt);
\draw [fill=ffffff] (-1.9598157750067955,-0.9996485274226641) circle (2.5pt);
\draw [fill=ffffff] (-7.399653943384187,-1.0046598937684774) circle (2.5pt);
\draw [fill=ffffff] (-6.66,-1) circle (2.5pt);
\draw [fill=ffffff] (-7.06,-0.12) circle (2.5pt);
\draw [fill=ffffff] (-6.545215981423482,-0.5511312394726173) circle (2.5pt);
\draw [fill=ffffff] (2.9989312241727784,-0.9996485274226641) circle (2.5pt);
\draw [fill=ffffff] (2.0417602521229568,-0.9996485274226641) circle (2.5pt);
\draw [fill=ffffff] (-0.9600481890175837,-0.9971428442497587) circle (2.5pt);
\draw [fill=black,shift={(-6.031550930977896,-1.4030635182604188)},rotate=270] (0,0) ++(0 pt,2.25pt) -- ++(1.9485571585149868pt,-3.375pt)--++(-3.8971143170299736pt,0 pt) -- ++(1.9485571585149868pt,3.375pt);
\draw [fill=black,shift={(-0.799684465951645,-1.3955464687417012)},rotate=90] (0,0) ++(0 pt,2.25pt) -- ++(1.9485571585149868pt,-3.375pt)--++(-3.8971143170299736pt,0 pt) -- ++(1.9485571585149868pt,3.375pt);
\draw [fill=black,shift={(0.9192141906613855,-1.3955464687417012)},rotate=270] (0,0) ++(0 pt,2.25pt) -- ++(1.9485571585149868pt,-3.375pt)--++(-3.8971143170299736pt,0 pt) -- ++(1.9485571585149868pt,3.375pt);
\draw [fill=black,shift={(5.153818752871347,-1.3955464687417012)},rotate=90] (0,0) ++(0 pt,2.25pt) -- ++(1.9485571585149868pt,-3.375pt)--++(-3.8971143170299736pt,0 pt) -- ++(1.9485571585149868pt,3.375pt);
\draw [fill=black,shift={(0.40256221806254994,2.992924095416853)},rotate=180] (0,0) ++(0 pt,2.25pt) -- ++(1.9485571585149868pt,-3.375pt)--++(-3.8971143170299736pt,0 pt) -- ++(1.9485571585149868pt,3.375pt);
\draw [fill=black,shift={(0.40256221806254994,-0.4000261791755805)}] (0,0) ++(0 pt,2.25pt) -- ++(1.9485571585149868pt,-3.375pt)--++(-3.8971143170299736pt,0 pt) -- ++(1.9485571585149868pt,3.375pt);
\end{scriptsize}
\end{tikzpicture} 
  \caption{The algorithm Diagonalize($T$, $-\lambda$) applied to a generic member of trees $\mathfrak{F}(n)$.}\label{Main_Family_Diago}
\end{figure}
Let $(d_v)_{v\in V}$ be the sequence obtained by executing {\bf Diagonalize} ($T,-\lambda$), when $T \in  \mathfrak{F}(n)$ (see Figure~\ref{Main_Family_Diago}).
Since we are going to use this algorithm in different trees, it  is useful to adopt a new notation, recording the tree we are using and the vertex where we are applying, which is
$$f_{T}(v):=d_v, \; \forall v \in V(T).$$
Taking $\lambda=\rho(T)$  we will reason that $f_{T}(v_n)=0$ and $f_{T}(v_i) <0$ for $i \not = n$.

For notation simplicity, let us rename the values in the extremity of each leaf $v_{1}$  by $a_1=-\lambda$, then $f_{T}(v_{j+1})= a_{j+1} = -\lambda-\frac{1}{a_{j}}$ for $j \geq 1$. In particular, $a_1 \in (-\infty, \, -2)$ and $a_{2} = -\lambda-\frac{1}{a_{1}}=-\lambda+\frac{1}{\lambda} \in (-\infty, \, -\frac{3}{2})$.\\

On the vertex with $r_1\, P_2$ and $r_2\, P_2$ the algorithm produces, respectively
$$b_1 =-\lambda-\frac{r_1}{a_{2}}, \text{ and } c_1 =-\lambda-\frac{r_2}{a_{2}}.$$
From these vertices towards the root $u$ we obtain two sequences $b_{j}$ and $c_{j}$, obeying the same relation as $a_{j}$, that is, $b_{j+1} = -\lambda-\frac{1}{b_{j}}$  and $c_{j+1} = -\lambda-\frac{1}{c_{j}}$.

The vertices labeled with the values of the numeric sequences generated by the application of the algorithm {\bf Diagonalize}
$(T,-\lambda)$ appear in Figure \ref{Main_Family_Diago}.

By the application of the algorithm, we know that $a_j, b_j, c_j<0$ for all indices appearing in the picture and
\begin{equation}\label{eq-root_main}
f_{T}(u)= -\lambda -\frac{1}{a_{h}}-\frac{1}{b_{q_1}}-\frac{1}{c_{q_2}} =0,
\end{equation}
otherwise, if some previous vertex produces zero then, the step (4) of  the algorithm will produce a positive value which is not possible because  $\lambda$ is the index of $T$.
Now suppose that we have a new tree $\tilde{T}$ with new parameters $[h', q'_1, q'_2]$ (same number of vertices and with the same $r_1$ and $r_2$). We now execute {\bf Diagonalize}$(\tilde{T},-\lambda)$.\\

Since the tree $\tilde{T}$ has the same properties of $T$ we obtain the same sequences and the same formula at the root $u$. More precisely, the execution of {\bf Diagonalize}$(\tilde{T},-\lambda)$ produces the same sequences, $a_j, b_j, c_j$  and
\begin{equation}\label{eq-root_main_after}
f_{\tilde{T}}(u)= -\lambda -\frac{1}{a_{h}}-\frac{1}{b_{q'_1}}-\frac{1}{c_{q'_2}}
\end{equation}
From Theorem~\ref{th-diagonalization} it follows that $\rho(T) > \rho(\tilde{T})$ if and only if  $-\lambda -\frac{1}{a_{h}}-\frac{1}{b_{q'_1}}-\frac{1}{c_{q'_2}} <0$ and all $a_j, b_j, c_j<0$.

We will see that in order to determine the sign of Equation (\ref{eq-root_main_after}), we need to deal in great detail with the recurrences appearing when the algorithm {\bf Diagonalize}$(\tilde{T},-\lambda)$ is implemented. For that we will determine some analytical properties of the recurrences $a_j, b_j$ and $c_j$.

\section{Analytical properties of recurrence sequences}
\label{sec:analytical properties}
As we observed in the previous section, the only information needed is the sign of the numeric sequences generated.  All the recurrence relations are of the same kind, differing only by the initial value. More precisely, they are of the form
\begin{equation}\label{phi-def}
 z_{j+1} = \varphi(z_j) \mbox{~~~where ~~ }\varphi(t) = -\lambda - \frac{1}{t}
\end{equation}
for $t\neq 0$ and $\lambda=\rho(T)>\rho(T_j)>2$.
Hence it depends only on the analytical behavior of the function $\varphi(t)$.

In \cite{BeOlTr} this sequence was extensively studied and its behaviour can be summarized by the following result whose proof is a combination of the results found in \cite{BeOlTr}.
\begin{theorem}\label{thm:results_on_the_sequence}
   Let $z_j$ be the recurrence  in formula~\eqref{phi-def}, then
   \begin{itemize}
     \item[(a)] $\varphi(t)=t$ has two fixed points $\theta=\frac{-\lambda-\sqrt{\lambda^2-4}}{2}<-1$ and $\theta^{-1}$;
     \item[(b)] $\theta(\lambda)$ is decreasing as a function of $\lambda$;
     \item[(c)] $z_{j}= \theta + \frac{\theta^{-1}-\theta}{\beta(\theta^2)^j+1},$ where the constant $\beta \in \mathbb{R}$ is obtained by choosing the value $z_{1}$;
     \item[(d)] The sequence $a_j$  obtained from $z_{j}$ by taking $a_1 =-\lambda$ is given by  $a_j=\theta-\frac{\sqrt{\lambda^2-4}}{(\theta^2)^j-1} <0,$ for any $j\geq 1$. In particular, $a_j$ is increasing and $\displaystyle \lim_{j \to \infty}a_{j}=\theta$.
\end{itemize}
\end{theorem}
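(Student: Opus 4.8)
The plan is to treat the four items essentially independently, with item (c) carrying the real content and the rest following by direct computation.

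For (a), I would start from the fixed-point equation $\varphi(t)=t$, which rearranges to $t^2+\lambda t+1=0$. Since $\lambda=\rho(T)>2$ the discriminant $\lambda^2-4$ is positive, so the two roots are real; by Vieta their product is $1$, hence they are reciprocal, while their sum is $-\lambda$. Writing $\theta=\frac{-\lambda-\sqrt{\lambda^2-4}}{2}$ for the smaller root, the bound $\theta\le -\lambda/2<-1$ is immediate and the other root is $\theta^{-1}$. For (b), I would simply differentiate $\theta(\lambda)=\frac{-\lambda-\sqrt{\lambda^2-4}}{2}$ to get $\theta'(\lambda)=\tfrac12\bigl(-1-\frac{\lambda}{\sqrt{\lambda^2-4}}\bigr)<0$ for $\lambda>2$, which yields the claimed monotonicity.

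The heart of the argument is (c). Because $\varphi$ is a M\"obius transformation with fixed points $\theta,\theta^{-1}$, the natural move is to conjugate it to a linear map by the substitution $w_j=\frac{z_j-\theta}{z_j-\theta^{-1}}$. Using the fixed-point relation $-\lambda=\theta+\theta^{-1}$ I would compute $z_{j+1}-\theta=\frac{z_j-\theta}{\theta z_j}$ and $z_{j+1}-\theta^{-1}=\frac{\theta(z_j-\theta^{-1})}{z_j}$, whose quotient gives $w_{j+1}=\theta^{-2}w_j$. This geometric recurrence solves as $w_j=w_1(\theta^2)^{-(j-1)}$; solving $w_j=\frac{z_j-\theta}{z_j-\theta^{-1}}$ back for $z_j$ and relabelling the resulting constant as $\beta$ then produces exactly $z_j=\theta+\frac{\theta^{-1}-\theta}{\beta(\theta^2)^j+1}$, with $\beta$ in bijective correspondence with the free initial datum $z_1$.

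For (d), I would specialize the formula of (c) to $a_1=-\lambda$. Since $-\lambda=\theta+\theta^{-1}$, imposing $a_1=\theta+\theta^{-1}$ in the closed form forces $\beta=-1$; substituting and using $\theta^{-1}-\theta=\sqrt{\lambda^2-4}$ collapses the expression to $a_j=\theta-\frac{\sqrt{\lambda^2-4}}{(\theta^2)^j-1}$. The qualitative claims then follow from $\theta<-1$: as $\theta^2>1$ we have $(\theta^2)^j-1>0$ for every $j\ge1$, so the subtracted term is positive and $a_j<\theta<0$; and as $j$ grows $(\theta^2)^j-1$ increases, the subtracted term decreases, so $a_j$ increases to the limit $\theta$. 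I expect the only delicate point to be the well-definedness of the conjugation: one must check the orbit never hits $0$ (so $\varphi$ is applied legitimately) nor the pole $\theta^{-1}$ of the substitution, but for $a_j$ this is automatic from the closed form, which shows $a_j<\theta<-1<0<\theta^{-1}$. Noting this, together with the fact that the same reciprocal-fixed-point structure is what was analysed in \cite{BeOlTr}, is all that is needed to make the combination of results rigorous.
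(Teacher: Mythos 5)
Your proof is correct, but it relates to the paper in an unusual way: the paper does not actually prove Theorem \ref{thm:results_on_the_sequence} --- it presents the statement as a summary whose ``proof is a combination of the results found in \cite{BeOlTr}'' and gives no argument of its own. Your write-up therefore supplies what the paper outsources: a self-contained derivation. The core step, conjugating the M\"obius map $\varphi$ by $w_j=\frac{z_j-\theta}{z_j-\theta^{-1}}$ so that the recurrence linearizes to $w_{j+1}=\theta^{-2}w_j$, is the standard device for such rational recurrences, and your computations check out: $z_{j+1}-\theta=\frac{z_j-\theta}{\theta z_j}$, $z_{j+1}-\theta^{-1}=\frac{\theta(z_j-\theta^{-1})}{z_j}$, and for item (d) the condition $a_1-\theta=\theta^{-1}$ indeed forces $\beta=-1$, after which $\theta^{-1}-\theta=\sqrt{\lambda^2-4}$ collapses the closed form to the stated one. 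One small correction: your final chain $a_j<\theta<-1<0<\theta^{-1}$ is wrong at the last link, since both fixed points are negative (their product is $1$ and their sum is $-\lambda<0$), so in fact $\theta^{-1}\in(-1,0)$. This does not damage your well-definedness argument: the correct chain $a_j<\theta<-1<\theta^{-1}<0$ still separates the orbit of $a_j$ both from $0$ (so $\varphi$ is always legitimately applied) and from the pole $\theta^{-1}$ of the substitution, which is all you need.
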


We also need to understand the sequences  $b_j$ and $c_j$ obtained from $z_{j}$ by considering $b_{1}= -\lambda - \frac{r_1}{a_2}$ and $c_{1}= -\lambda - \frac{r_2}{a_2}$, $2 \leq r_1 < r_2 $, respectively.

We are going to study both at the same time by considering a parametric sequence
\begin{equation}\label{b_j_c_j_def}
 z_{j+1}(r) = \varphi(z_j(r))
\end{equation}
with $z_1(r):=-\lambda - \frac{r}{a_2}$ for some $r\geq 2$.

The main facts can be summarized in the following result.
\begin{theorem}\label{thm:results_on_the_sequence zjr}
   Let $z_j$ be the recurrence  in formula~\eqref{b_j_c_j_def}, then
   \begin{itemize}
     \item[(a)] $z_{j}(r)= \theta + \frac{\theta^{-1}-\theta}{\beta(\theta^2)^j+1},$ where the constant $\beta:= \beta(r) \in \mathbb{R}$ is given by $$\beta:= \frac{ r   - a_2 \theta}{a_2\theta - r \theta^2};$$
     \item[(b)] $\beta(r)$ is a continuous function of $r$ in $(2,\infty)\setminus r_*$, where $r_*:=\frac{a_2}{\theta}$. Moreover $\beta(r)$ has a single root in $r^*=a_2 \theta$ and $\beta(r)>0$, for $r \in (r_*, r^*)$

     \item[(c)] The sequence $z_j(r)<0$, for $j\geq 1$, is decreasing and $\displaystyle \lim_{j \to \infty}z_{j}=\theta$.
   \end{itemize}
\end{theorem}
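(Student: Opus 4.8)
The plan is to treat parts (a) and (b) as essentially computational and to reduce (c) to a short dynamical argument, isolating a single inequality as the real difficulty.

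For part (a) I would not rederive a closed form from scratch but simply specialize the general solution of Theorem~\ref{thm:results_on_the_sequence}(c), namely $z_j=\theta+\frac{\theta^{-1}-\theta}{\beta(\theta^2)^j+1}$, which is valid for \emph{any} initial value; only the constant $\beta$ depends on $z_1$. Setting $j=1$ and imposing $z_1(r)=-\lambda-\frac{r}{a_2}$, the algebra collapses once one uses the fixed-point identity $\theta+\theta^{-1}=-\lambda$ (equivalently, that $\theta,\theta^{-1}$ are the roots of $t^2+\lambda t+1$). Rewriting $z_1(r)=\theta+\theta^{-1}-\frac{r}{a_2}$ and solving the resulting linear equation for $\beta$ yields $\beta=\frac{r-a_2\theta}{a_2\theta-r\theta^2}$ after clearing denominators; this is the entire content of (a).

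For part (b) I would put $\beta(r)$ in factored form. Writing the denominator as $a_2\theta-r\theta^2=-\theta^2\bigl(r-\tfrac{a_2}{\theta}\bigr)$ and the numerator as $r-a_2\theta$, one gets
\begin{equation*}
\beta(r)=\frac{r^*-r}{\theta^2\,(r-r_*)},\qquad r_*=\frac{a_2}{\theta},\quad r^*=a_2\theta .
\end{equation*}
Since $\theta^2>0$, this exhibits at once the pole at $r_*$ (hence continuity on $(2,\infty)\setminus\{r_*\}$), the unique root at $r^*$, and the sign of $\beta$. To conclude $\beta>0$ exactly on $(r_*,r^*)$ I must first know $r_*<r^*$, which follows from $r^*-r_*=a_2\frac{\theta^2-1}{\theta}>0$, using $a_2<\theta<-1$ (so $a_2,\theta<0$ and $\theta^2-1>0$); here $a_2<\theta$ comes from Theorem~\ref{thm:results_on_the_sequence}(d), since $a_j\uparrow\theta$ with $a_1=-\lambda<\theta$.

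For part (c) I would argue by the monotone dynamics of $\varphi$ rather than grind the closed form. On $(-\infty,0)$ the map $\varphi(t)=-\lambda-\frac1t$ is increasing ($\varphi'(t)=t^{-2}>0$), fixes $\theta$, and satisfies $\varphi(t)<t$ precisely when $t^2+\lambda t+1<0$, i.e.\ on $(\theta,\theta^{-1})$; together with $\varphi(t)>\varphi(\theta)=\theta$ this gives $\theta<\varphi(t)<t$ for every $t\in(\theta,\theta^{-1})$. Hence, once $z_1(r)\in(\theta,\theta^{-1})$, the whole orbit remains in this interval, is strictly decreasing, and converges to the unique fixed point $\theta$ there, and all terms are negative because $(\theta,\theta^{-1})\subset(-\infty,0)$. (The same three facts also drop out of the closed form, whose perturbation term $\frac{\theta^{-1}-\theta}{\beta(\theta^2)^j+1}$ is positive, decreasing, and tends to $0$ as soon as $\beta>0$, since $\theta^{-1}-\theta>0$ and $\theta^2>1$.) A direct check shows the needed membership is equivalent to $r\in(r_*,r^*)$, i.e.\ to $\beta>0$ by part (b): $z_1>\theta\iff r>r_*$ and $z_1<\theta^{-1}\iff r<r^*$. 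The lower bound is easy, as $r\ge2$ while $r_*=\frac{|a_2|}{|\theta|}<2$ (this reduces to $-\tfrac1\lambda<\sqrt{\lambda^2-4}$). I expect the upper bound $r<r^*=a_2\theta$ to be the main obstacle: it is genuinely \emph{false} for $\lambda$ near $2$ (there $a_2\theta\to\tfrac32<2\le r$), so it cannot follow from the crude bound $\lambda>\sqrt{r_2+2}$ alone. This is exactly where the strengthened lower bound $\lambda>\rho(T_j)$ of Theorem~\ref{th:T_j} must enter: writing $a_2\theta=\frac{(\lambda^2-1)(\lambda+\sqrt{\lambda^2-4})}{2\lambda}$, I would verify that this quantity exceeds $r_2\ge r$ whenever $\lambda$ is at least the index of $T_j$, thereby guaranteeing $\beta>0$ and closing part (c).
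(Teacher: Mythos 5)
Your parts (a) and (b) are correct and essentially coincide with the paper's own computation: (a) is the same specialization of Theorem~\ref{thm:results_on_the_sequence}(c) via $\theta+\theta^{-1}=-\lambda$, and your factored form $\beta(r)=\frac{r^*-r}{\theta^{2}(r-r_*)}$ is a tidier route to the pole, the root, and the sign than the paper's limit-and-derivative discussion, though the content is identical. For (c), your monotone-dynamics argument ($\varphi$ increasing on $(-\infty,0)$, fixing $\theta$, with $\varphi(t)<t$ on $(\theta,\theta^{-1})$, so that orbits starting there decrease to $\theta$) is a valid replacement for the paper's direct reading of the closed form, and your reduction of the whole problem to the membership $z_1(r)\in(\theta,\theta^{-1})$, equivalently $r_*<r<r^*$, as well as the easy bound $r_*<2\le r$, are exactly right.

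The genuine gap is the upper bound $r<r^*=a_2\theta$, which you correctly isolate as the crux --- your observation that the crude bound $\lambda>\sqrt{r_2+2}$ cannot suffice because $a_2\theta\to\frac32$ as $\lambda\to 2$ is accurate --- but then leave as a promissory note (``I would verify\dots''). That verification cannot be carried out in the form you state it. First, $\rho(T_j)$ has no closed form, so there is nothing explicit to check against. Second, and more importantly, for each \emph{fixed} $j$ the implication ``$\lambda\ge\rho(T_j)\Rightarrow a_2\theta\ge r_2$'' is false: at $\lambda=\rho(T_j)$ the root output of \textbf{Diagonalize}$(T_j,-\lambda)$ must vanish, giving $-\lambda-\frac{1}{a_j}-\frac{r_2}{a_2}=0$, i.e.\ $a_2a_{j+1}=r_2$; since $a_{j+1}<\theta<0$ and $a_2<0$, this forces $a_2\theta<r_2$ there, and by continuity also for $\lambda$ slightly larger. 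The inequality $a_2\theta\ge r_2$ is in fact equivalent to $\lambda\ge\rho(T_j)$ for \emph{all} $j$ simultaneously, and the mechanism converting Theorem~\ref{th:T_j} into the desired bound is precisely what the paper supplies and your proposal omits: fix $\lambda=\rho(T)$; since $\rho(T_j)<\lambda$ for every $j$, Theorem~\ref{th-diagonalization} forces all outputs of \textbf{Diagonalize}$(T_j,-\lambda)$ to be negative, in particular
\begin{equation*}
-\lambda-\frac{1}{a_j}-\frac{r_2}{a_2}<0\quad\Longleftrightarrow\quad r_2<a_2a_{j+1},
\end{equation*}
and letting $j\to\infty$ with $a_j\to\theta$ yields $r_2\le a_2\theta=r^*$. (One wrinkle your write-up shares with the paper: this limiting argument only gives $r_2\le r^*$, while (b) asserts $\beta>0$ on the open interval. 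The case $r_2=r^*$ can be excluded separately: then $c_j\equiv\theta^{-1}$, so $f_T(u)=\theta^{-1}-\frac{1}{a_h}-\frac{1}{b_{q_1}}>0$, using $a_h<0$ and $b_{q_1}\in(\theta,\theta^{-1})$, contradicting $f_T(u)=0$.)
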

\begin{proof}
{\bf (a)} Using Theorem~\ref{thm:results_on_the_sequence} (c) and the fact that $-\lambda= \theta+\theta^{-1}$  and $\theta-\theta^{-1}=
-\sqrt{\lambda^2-4}$ we obtain
$$z_1=-\lambda - \frac{r}{a_2}= \theta+\theta^{-1}- \frac{r}{a_2}
\text{ and }
z_{1}= \theta + \frac{\theta^{-1}-\theta}{\beta(\theta^2)^1 +1},$$
producing
$$\beta:= \beta(r)= \frac{\theta^{-1} \frac{r}{a_2} -1}{1- \theta \frac{r}{a_2}}=\frac{ r \theta^{-1}   - a_2}{a_2 - r \theta}=\frac{ r   - a_2 \theta}{a_2\theta - r \theta^2}.$$
{\bf (b)} We notice that $\beta(r)$ is a rational function, hence continuous, except for the roots of the denominator. Thus, the discontinuity  occurs at $r_*:=\frac{a_2}{\theta}$. Also, $\beta(r)$ has only one possible root in $r^*=a_2 \theta$.

Additionally, $\displaystyle\lim_{r\to r_*^{+}}\beta(r)=+\infty$. To see that we just take $r:=r_* +\delta$ for $\delta>0$, then
$$\beta(r_* +\delta)= \frac{ r_* +\delta   - a_2 \theta}{a_2\theta - (r_* +\delta) \theta^2}=\frac{ \delta +( r_* - a_2 \theta)}{(a_2\theta -r_* \theta^2) - \delta \theta^2}=\frac{ \delta +( r_* - a_2 \theta)}{- \delta \theta^2} \stackrel{\delta \to 0^+}{\to} +\infty$$
because $r_* - a_2 \theta= \frac{a_2}{\theta}- a_2 \theta=a_2(\frac{1-\theta^2}{\theta})<0$.
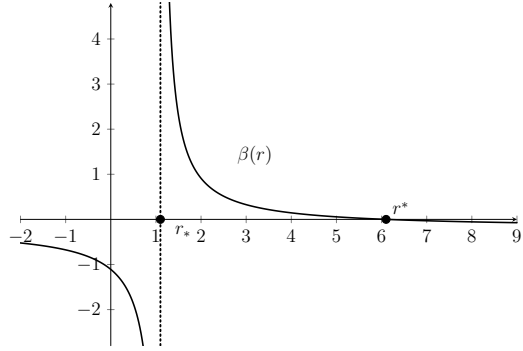
\begin{figure}[H]
  \centering
\definecolor{dtsfsf}{rgb}{0.8274509803921568,0.1843137254901961,0.1843137254901961}
\begin{tikzpicture}[line cap=round,line join=round,>=triangle 45,x=1cm,y=1cm,scale=0.6, every node/.style={scale=1}]
\begin{axis}[
x=1cm,y=1cm,
axis lines=middle,
xmin=-2,
xmax=9,
ymin=-2.800000000000001,
ymax=4.800000000000001,
xtick={-9,-8,...,9},
ytick={-4,-3,...,4},]
\clip(-9,-4.8) rectangle (9,4.8);
\draw[line width=1pt,color=black,smooth,samples=200,domain=-9:1] plot(\x,{1/((\x)-1.1)-0.2});
\draw[line width=1pt,color=black,smooth,samples=200,domain=1.11:9] plot(\x,{1/((\x)-1.1)-0.2});
\draw [line width=1.2pt,dash pattern=on 1pt off 2pt] (1.1,-4.8) -- (1.1,4.8);
\draw (2.66,1.78) node[anchor=north west] {$\beta(r)$};
\draw (1.28,0.0) node[anchor=north west] {$r_*$};
\draw (6.1,0.54) node[anchor=north west] {$r^*$};
\begin{scriptsize}
\draw [fill=black] (1.1,0) circle (2.5pt);
\draw [fill=black] (6.1,0) circle (2.5pt);
\end{scriptsize}
\end{axis}
\end{tikzpicture} 
  \caption{The behaviour of $\beta(r)$.}\label{beta}
\end{figure}

As $r^* - r_* =a_2(\theta-\theta^{-1}) = a_2(-\sqrt{\lambda^2-4}) >0$, we see that $r_*< r^*$. Also, differentiating with respect to $r$ we conclude that $\beta(r)$ is decreasing and take the value zero only for $r^*:= a_2\theta  > r_*$. Thus we conclude that $\beta(r)>0$ for $r \in (r_*, r^*)$. Figure \ref{beta} illustrates a typical behaviour of the function $\beta(r)$.\\

\noindent {\bf (c)} We recall that $r^*=a_2 \theta$, moreover, from Theorem \ref{thm:results_on_the_sequence} (d), we know that $\displaystyle \lim_{j \to \infty}a_{j}=\theta$ hence
\begin{equation}\label{eq:form_r_star}
  r^*=\lim_{j \to \infty} a_2 a_{j}.
\end{equation}
By Theorem \ref{th:T_j}, we know that $\rho(T_j) < \rho(T)$, where $T_j$ is the tree of Figure \ref{fig:T_j}. Now, we apply {\bf Diagonalize}($T_{j}$, $-\lambda$) with the root at $S_{r_2}$, for $\lambda =\rho(T)$. By our comparison method,  we see that
$$f_{T_{j}}(u)= -\lambda -\frac{1}{a_{j}}-\frac{r_2}{a_{2}}<0$$
or, equivalently, $a_{j+1} < \frac{r_2}{a_{2}},$ and since $a_{2}<0$, we get $r_2<a_{2}a_{j+1}$.\\
Taking the limit on both sides and using Equation~(\ref{eq:form_r_star}), we obtain
$$r_2 \leq \lim_{j \to \infty} a_2 a_{j+1}= \lim_{j \to \infty} a_2 a_{j}=r^*.$$
We also observe that $r_* < r_1$, because $r_* = \frac{a_2}{\theta}=2\,{\frac {{\lambda}^{2}-1}{\lambda\, \left( \lambda+\sqrt {{\lambda}^{2}-4} \right) }}<2\leq r_1$.

Since $z_{j}= \theta + \frac{\theta^{-1}-\theta}{\beta(r)(\theta^2)^j+1},$ for   $\theta^{-1}-\theta= \sqrt{\lambda^2-4}>0$ and $\theta <0$, we see that $z_{j}$ is always negative and decreasing, as long as $\beta(r) >0$.
Now, because $r_* < r_1 < r_2 \leq r^*$, we see from item (b), that $\beta(r) >0$. Moreover $z_j$ tends to $\theta$ as $j \to \infty$.
\end{proof}

\section{Ordering the 2-switches of $\mathfrak{F}(n)$}
\label{sec:ordering the family}

We show in this section how the spectral radius varies in each case for all the possible 2-switching positions in the appropriate interval. We notice that $\mathfrak{F}(n)$ is preserved by both types of $(s,t)$-2-switch, only changing $[h, q_1, q_2]$ (see Figure~\ref{Main_Family_Diago}).

\subsection{Warmup: Ordering 2-switches of Type I}

Given a 2-switch of Type I such that $T=[h, q_1, q_2] \to \tilde{T}=[h', q'_1, q'_2]$ (see Figure \ref{TypeISwitch}), we observe that the actual result of the operation in the tree is an increment (decrement) of the length  $h$ with a decrement (increment) of the length $q_1$, while $q_2$ remains unchanged.

In order to study the behavior of the spectral radius of members of this family, it is enough to study the 2-switch  $T=[h, q_1, q_2] \to \tilde{T}=[h'=h-1, q'_1=q_1 +1, q'_2=q_2]$, since this will cover all possible positions.

We will prove that $\rho(T) > \rho(\tilde{T})$ using our comparison method, hence we need to prove that if $-\lambda -\frac{1}{a_{h}}-\frac{1}{b_{q_1}}-\frac{1}{c_{q_2}} =0$ then
$-\lambda -\frac{1}{a_{h'}}-\frac{1}{b_{q'_1}}-\frac{1}{c_{q_2}}= -\lambda -\frac{1}{a_{h-1}}-\frac{1}{b_{q_1 +1}}-\frac{1}{c_{q_2}} <0$.
Notice that, from the first equation, we obtain $-\lambda -\frac{1}{c_{q_2}}= \frac{1}{a_{h}}+\frac{1}{b_{q_1}}$ and substituting in the second one it is equivalent to
$\frac{1}{a_{h}}+\frac{1}{b_{q_1}}-\frac{1}{a_{h-1}}-\frac{1}{b_{q_1 +1}} <0$, which in turn is equivalent to
\begin{equation}\label{eq: condit_typeII-decrease}
   (a_{h}- a_{h+1}) +  (b_{q_{1}+2}- b_{q_{1}+1}) <0.
\end{equation}

From Theorem~\ref{thm:results_on_the_sequence}, the sequence $a_j$ is increasing thus $a_{h}- a_{h+1}<0$. From Theorem~\ref{thm:results_on_the_sequence zjr}, the sequence $b_j$ is decreasing thus $b_{q_{1}+2}- b_{q_{1}+1}<0$.

We remark that the transformation $T=[h, q_1, q_2] \to \tilde{T}=[h'=h-1, q'_1=q_1, q'_2=q_2+1]$ is also of Type-I and using a similar argument, we can show that the index decreases as well.  This proves the following theorem.
\begin{theorem} \label{thm: ordering type I}
   Let $T=[h,q_1,q_2]$ be a tree in $\mathfrak{F}(n)$ and $\tilde{T}=[h',q'_1,q'_2]$ be the graph obtained by a 2-switch in Figure~\ref{TypeISwitch}. If $h'=h-1, q'_{1}=q_{1}+1$ and $q'_{2}=q_{2}$ or if  $h'=h-1, q'_{1}=q_{1}$ and $q'_{2}=q_{2}+1$ then $\rho(T)> \rho(\tilde{T})$.
\end{theorem}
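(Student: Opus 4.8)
The plan is to use the comparison method built on \textbf{Diagonalize} and Theorem~\ref{th-diagonalization}, exactly as set up in the preceding discussion. Fixing $\lambda = \rho(T)$, the fact that $\lambda$ is the index forces $f_T(u) = -\lambda - \frac{1}{a_h} - \frac{1}{b_{q_1}} - \frac{1}{c_{q_2}} = 0$ while every earlier diagonal entry $a_j, b_j, c_j$ is strictly negative. Since $\tilde{T}$ has the same structure and therefore produces the very same sequences $a_j, b_j, c_j$ (only read off at shifted indices), the goal is to show that $f_{\tilde{T}}(u) = -\lambda - \frac{1}{a_{h-1}} - \frac{1}{b_{q_1+1}} - \frac{1}{c_{q_2}} < 0$, with all entries still negative; by Theorem~\ref{th-diagonalization} this will certify that $\tilde{T}$ has strictly fewer eigenvalues exceeding $\lambda$, i.e. $\rho(\tilde{T}) < \lambda = \rho(T)$.

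The central step is to rewrite this inequality purely in terms of the recurrences. First I would subtract the root equation $f_T(u) = 0$ to eliminate the unchanged term $\frac{1}{c_{q_2}}$, reducing the target to $\frac{1}{a_h} + \frac{1}{b_{q_1}} - \frac{1}{a_{h-1}} - \frac{1}{b_{q_1+1}} < 0$. Then, using the defining recurrence in the form $\frac{1}{z_j} = -\lambda - z_{j+1}$ to substitute each reciprocal, the $\lambda$-terms cancel and the left-hand side collapses to $(a_h - a_{h+1}) + (b_{q_1+2} - b_{q_1+1})$, which is precisely inequality~\eqref{eq: condit_typeII-decrease}.

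From here the conclusion is immediate: Theorem~\ref{thm:results_on_the_sequence}(d) gives that $a_j$ is increasing, so $a_h - a_{h+1} < 0$, and Theorem~\ref{thm:results_on_the_sequence zjr}(c) gives that $b_j$ is decreasing, so $b_{q_1+2} - b_{q_1+1} < 0$; the sum of two negative quantities is negative, settling the first case. For the second case ($q_1' = q_1$, $q_2' = q_2+1$) I would argue identically, now eliminating the unchanged $b_{q_1}$ term so that the condition becomes $(a_h - a_{h+1}) + (c_{q_2+2} - c_{q_2+1}) < 0$; because $c_j$ obeys the same parametric recurrence~\eqref{b_j_c_j_def} with initial parameter $r_2$ (still in the range $r_* < r_2 \leq r^*$ where $\beta > 0$), it is decreasing by Theorem~\ref{thm:results_on_the_sequence zjr}(c), and the same argument closes the proof.

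There is no deep difficulty here once the machinery of the earlier sections is available; the proof is essentially an algebraic simplification plus two monotonicity facts. The one point I would be careful about is confirming the full hypothesis of Theorem~\ref{th-diagonalization}: it is not enough that the root value is negative, one must also know that \emph{all} other diagonal entries produced for $\tilde{T}$ are negative, since otherwise the sign count of the congruent diagonal matrix would not correctly count the eigenvalues above $\lambda$. This negativity is exactly what the sign statements of Theorems~\ref{thm:results_on_the_sequence} and \ref{thm:results_on_the_sequence zjr} guarantee, so the argument is complete.
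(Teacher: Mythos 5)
Your proposal is correct and follows essentially the same route as the paper: the same reduction via the root equation $f_T(u)=0$, the same algebraic collapse to inequality~\eqref{eq: condit_typeII-decrease}, and the same appeal to the monotonicity statements of Theorems~\ref{thm:results_on_the_sequence} and~\ref{thm:results_on_the_sequence zjr}. Your explicit treatment of the second case via the decreasing sequence $c_j$, and your remark about verifying negativity of \emph{all} diagonal entries, merely spell out details the paper leaves as ``a similar argument.''
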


We remark that this result may be obtained also by using Lemma \ref{lem:hoff} due to Hoffman \& Smith \cite{Hoff2}. We add an edge on the internal path from $u$ to $r_1P_2$ (or from $u$ to $r_2P_2$) and then erase the  pendant vertex from $P_h$, so that the spectral radius decreases even more, keeping both with the same number of vertices.

Our method, after we obtained that the sequence $b_j$ is decreasing, is simple enough to provide the alternative proof. We observe, however, that for 2-switching of Type II, we are not aware of a known result that apply. Additionally, or perhaps because of that, the application of our method  requires to overcome quite a few technical difficulties.

\subsection{Ordering 2-switches of Type II}

   We observe that a 2-switch of type II can be seen as a displacement of the central path of length $h$ from the position closest  to $r_1 \, P_2$ to the closest to $r_2 \, P_2$ (or vice-versa). Indeed, if we take $s=q_2$ then $q'_1 =q_2+t-s= t$ and $q'_2=q_1-t+q_2=q_1+q_2-t$, for $1 \leq t \leq q_1 -1$. For instance, taking $t=1$ we can apply the 2-switch sequentially.

Since every configuration $[h, q_1, q_2]$ can be obtained by successive changes by 1, we only need to consider the case where $T=[h, q_1, q_2] \to \tilde{T}=[h, q'_1=q_1 -1, q'_2=q_2+1]$. We will prove that this operation decreases the spectral radius

By using our method of Section~\ref{sec:family}, given a 2-switch of Type II we need to prove that if $-\lambda -\frac{1}{a_{h}}-\frac{1}{b_{q_1}}-\frac{1}{c_{q_2}} =0$ then $-\lambda -\frac{1}{a_{h}}-\frac{1}{b_{q_1 -1}}-\frac{1}{c_{q_2+1}} <0$.

\begin{theorem}\label{thm: ordering type II}
   Let $T=[h,q_1,q_2]$ be a tree obtained by a 2-switch in Figure~\ref{TypeIISwitch}. If $r_2 > r_1 \geq 2$, $q'_{1}=q_{1}-1$ and $q'_{2}=q_{2}+1$ then $\rho(T)> \rho(T')$.
\end{theorem}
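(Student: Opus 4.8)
The plan is to run the comparison method of Section~\ref{sec:family} on the switch $T=[h,q_1,q_2]\to T'=[h,q_1-1,q_2+1]$. Writing $\lambda=\rho(T)$, the fact that $\lambda$ is the index of $T$ is exactly the root identity $-\lambda-\frac1{a_h}-\frac1{b_{q_1}}-\frac1{c_{q_2}}=0$, and since all entries $a_j,b_j,c_j$ are negative, Theorem~\ref{th-diagonalization} reduces the claim $\rho(T)>\rho(T')$ to showing $f_{T'}(u)=-\lambda-\frac1{a_h}-\frac1{b_{q_1-1}}-\frac1{c_{q_2+1}}<0$. Subtracting the root identity and rewriting each reciprocal through the recurrence in the form $\frac1{z_j}=-\lambda-z_{j+1}$ (which holds for all three sequences), this collapses to
\[
f_{T'}(u)=(b_{q_1}-b_{q_1+1})+(c_{q_2+2}-c_{q_2+1})<0 .
\]
In contrast to Type~I, where both bracketed terms were negative, here the first is positive ($b_j$ decreasing) and the second negative, so the content is that the one-step drop of the $c$-sequence dominates that of the $b$-sequence: $c_{q_2+1}-c_{q_2+2}>b_{q_1}-b_{q_1+1}$.

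Next I would introduce the drop function $F(z):=z+\frac1z+\lambda$, so that $z_j-z_{j+1}=F(z_j)$ for each sequence and the target is simply $F(c_{q_2+1})>F(b_{q_1})$. The decisive simplification is to feed the root identity back in: from $\frac1{c_{q_2}}=-\lambda-\frac1{a_h}-\frac1{b_{q_1}}$ one obtains the clean relation $c_{q_2+1}=-\lambda-\frac1{c_{q_2}}=\frac1{a_h}+\frac1{b_{q_1}}$, whence $c_{q_2+1}b_{q_1}=1+\frac{b_{q_1}}{a_h}>1$ because $a_h,b_{q_1}<0$. A one-line manipulation then factors the quantity of interest as
\[
F(c_{q_2+1})-F(b_{q_1})=(c_{q_2+1}-b_{q_1})\cdot\frac{c_{q_2+1}b_{q_1}-1}{c_{q_2+1}b_{q_1}},
\]
and the second factor is strictly positive by the previous computation. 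Therefore the entire theorem reduces to the single sign condition $c_{q_2+1}>b_{q_1}$.

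The hard part will be proving $c_{q_2+1}>b_{q_1}$, and two observations organize the attack. First, since $r_2>r_1$ and $a_2<0$ we have $c_1=-\lambda-\frac{r_2}{a_2}>-\lambda-\frac{r_1}{a_2}=b_1$, and as $\varphi$ is increasing on the negative axis this propagates to $c_j>b_j$ for every $j$. Second, the two indices $q_2+1$ and $q_1$ differ, and applying the increasing inverse $\varphi^{-1}$ repeatedly turns the claim into $c_{q_2+1-k}>b_{q_1-k}$; when $q_1\ge q_2+2$ this lands at $c_1>b_{q_1-q_2}$, immediate from $c_1>b_1\ge b_{q_1-q_2}$. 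The delicate regime is $q_1\le q_2$, where the reduction stops at $c_{q_2-q_1+2}>b_1$ with the $c$-index at least $2$; here the $c$-sequence has already descended toward $\theta$ and the mere pointwise bound $c_j>b_j$ is not enough.

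This last inequality is where the hypotheses must be spent in full. By Theorem~\ref{thm:results_on_the_sequence zjr} the admissible range $r_*<r_1<r_2\le r^*$ keeps both sequences in the attracting regime (so $b_1,c_1\in(\theta,\theta^{-1})$, $\beta(r_1)>\beta(r_2)>0$, and each sequence decreases monotonically to $\theta$), and the lower bound $\lambda>\rho(T_j)$ of Theorem~\ref{th:T_j} --- the source of $r_2\le r^*=a_2\theta$ --- is exactly what prevents the $c$-sequence from falling below $b_1$ too early. Concretely I would substitute the closed form $z_j(r)=\theta+\frac{\theta^{-1}-\theta}{\beta(r)(\theta^2)^j+1}$, reduce the drop to a one-variable expression in $x=\beta(r)(\theta^2)^j$, and use $r_2\le r^*$ to bound how far $c_{q_2-q_1+2}$ can decay relative to $b_1$. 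I expect this estimate --- balancing the faster decay of the larger-parameter $c$-sequence against the unfavourable index shift --- to be the principal obstacle, and the reason the Type~II analysis is so much more technical than Type~I.
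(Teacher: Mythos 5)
Your setup and first reduction are sound and essentially the same as the paper's: the one use you make of the root identity, $c_{q_2+1}=\frac{1}{a_h}+\frac{1}{b_{q_1}}$, is exactly the paper's passage from (\ref{eq:ff1}) to (\ref{eq:ff2}), and your factorization, with $F(z)=z+\frac1z+\lambda$,
\[
F(c_{q_2+1})-F(b_{q_1})=\left(c_{q_2+1}-b_{q_1}\right)\cdot\frac{c_{q_2+1}b_{q_1}-1}{c_{q_2+1}b_{q_1}},
\qquad c_{q_2+1}b_{q_1}=1+\frac{b_{q_1}}{a_h}>1,
\]
is an equivalent, cleaner packaging of the paper's equivalence $\mathcal{I}<0\Leftrightarrow a_h(b_{q_1}^2-1)<b_{q_1}$ (the two residual inequalities coincide). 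Your handling of the regime $q_1\ge q_2+1$ is also complete: $c_j>b_j$ pointwise plus monotonicity of $b_j$ give $c_{q_2+1}>b_{q_2+1}\ge b_{q_1}$. The genuine gap is the regime $q_1\le q_2$, which you leave open, and the tool you earmark for it provably cannot close it. The inequality you need there, $c_{q_2-q_1+2}>b_1$, is \emph{false} as a statement about the recurrence under the constraints you invoke ($r_*<r_1<r_2\le r^*$ and $\lambda>\rho(T_j)$): for any fixed admissible $(\lambda,r_1,r_2)$ with $\beta(r_2)>0$, Theorem~\ref{thm:results_on_the_sequence zjr} itself gives that $c_j$ decreases to $\theta$, and $\theta<b_1$, so $c_{q_2-q_1+2}>b_1$ fails once $q_2-q_1$ is large. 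What rescues the true statement is that $\lambda$ is not a free parameter subject only to those inequalities: $\lambda=\rho([h,q_1,q_2])$, so the root identity holds and pins $c_{q_2+1}$ at $\frac1{a_h}+\frac1{b_{q_1}}$; in the closed form of Theorem~\ref{thm:results_on_the_sequence zjr}(a) this forces $\beta(r_2)(\theta^2)^{q_2}$ to stay bounded, i.e.\ $r_2$ must be exponentially close (in $q_2$) to $r^*$. The qualitative bound $r_2\le r^*$, i.e.\ $\beta(r_2)\ge 0$, carries no trace of this coupling between $\beta(r_2)$ and $q_2$, so no estimate built on it alone can settle the case $q_1\le q_2$.

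The repair is to avoid reintroducing the $c$-sequence once you have eliminated it: use the root identity a second time inside the first factor, so that the target $c_{q_2+1}>b_{q_1}$ becomes $\frac1{a_h}+\frac1{b_{q_1}}>b_{q_1}$, an inequality in $a_h$, $b_{q_1}$, $\lambda$ only, with no $q_2$ in it and hence no case split at all. This is precisely the paper's (\ref{eq:ff3})--(\ref{eq:ff4}), proved there uniformly in $h$ and $q_1$ by combining $a_h<\theta$, the bound $b_{q_1}\le b_1<-1$ (the auxiliary function $\phi$ and Figure~\ref{fig:phi}), monotonicity of $j\mapsto\psi(b_j)$ for $\psi(t)=t/(t^2-1)$, and finally $\psi(b_1)>\theta$ (the function $g$ and Figure~\ref{img_monster}). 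Your factorization slots into that chain verbatim and even simplifies it slightly, since positivity of $c_{q_2+1}b_{q_1}-1$ comes for free from the root identity rather than from a separate sign argument; but as proposed, with the delicate regime attacked through sequence asymptotics and $r_2\le r^*$ alone, the proof cannot be completed.
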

\begin{proof}
We recall that we need to prove that, given a 2-switch of Type II, we need to prove that if $-\lambda -\frac{1}{a_{h}}-\frac{1}{b_{q_1}}-\frac{1}{c_{q_2}} =0$ then $-\lambda -\frac{1}{a_{h}}-\frac{1}{b_{q_1 -1}}-\frac{1}{c_{q_2+1}} <0$. We define
\begin{equation}\label{eq:ff1}
  \mathcal{I}:= -\frac{1}{a_{h}}+\left(-\lambda-\frac{1}{b_{q_1 -1}}\right)-\frac{1}{c_{q_2+1}}= -\frac{1}{a_{h}}+ b_{q_1}-\frac{1}{c_{q_2+1}}
\end{equation}
and, from the first equation, $ -\frac{1}{a_{h}}-\frac{1}{b_{q_1}}+\left(-\lambda-\frac{1}{c_{q_2}}\right) =0$ or $c_{q_2+1}=\frac{1}{a_{h}}+ \frac{1}{b_{q_1}}$. Substituting that in (\ref{eq:ff1}) we obtain
\begin{equation}\label{eq:ff2}
  \mathcal{I}= -\frac{1}{a_{h}}+ b_{q_1}-\frac{1}{\frac{1}{a_{h}}+ \frac{1}{b_{q_1}}}.
\end{equation}
Using the fact that $a_{h}<0$ and $b_{q_1}<0$ conclude that  $\mathcal{I}<0$ if and only if
\begin{equation}\label{eq:ff3}
  a_{h} ( b_{q_1}^2-1) <  b_{q_1}.
\end{equation}

We already know that $a_{h} < \theta <  b_{q_1}$ but we do not know the sign of $b_{q_1}^2 - 1 $. We claim that $b_{q_1}^2 - 1 >0$. To see that, we recall that $b_j$ is decreasing  and $b_1$ is given by  $b_1:=-\lambda - \frac{r_1}{-\lambda+\frac{1}{\lambda}}=-\lambda - \frac{2}{-\lambda+\frac{1}{\lambda}}= \phi(\lambda, r_1)$
where  the auxiliary function $\phi: A \to \mathbb{R}$ is given by
$$\phi(t,r):=-t - \frac{r}{-t+\frac{1}{t}},$$
defined on the set $A=\{(t,r) \, | \, t \geq 2, \, r \geq 2\} \subset \mathbb{R}^{2}$.
\begin{figure}[H]
  \centering
  \includegraphics[width=8cm, height=5.5cm]{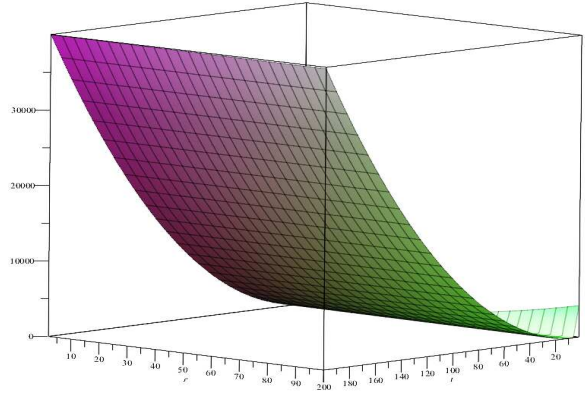}
  \caption{The graph of  $\phi(t,r)^2-1$ plotted in $A$. }\label{fig:phi}
\end{figure}
It is easy to see that $\phi(t,r)^2-1>0$ for $(t,r) \in A$, see Figure~\ref{fig:phi}. In other words, $b_{1}^2 - 1 >0$ and $b_{1}<-1$ because it is negative.
For any $T \in \mathfrak{F}(n)$ we conclude that $\theta < b_{q_1} < -1=b_{1}$, for any $T \in \mathfrak{F}(n)$. In particular $b_{q_1}^2 - 1 >0$.

From these facts, we can rewrite equation~(\ref{eq:ff3})
in the equivalent form
\begin{equation}\label{eq:ff4}
  a_{h} < \frac{ b_{q_1}}{ b_{q_1}^2-1}.
\end{equation}
In order to conclude our proof, it  is sufficient to prove that $\theta< \frac{ b_{q_1}}{ b_{q_1}^2-1}$ because $a_{h} < \theta$ for all $h$.
At this point, it is useful to introduce a second auxiliary function
$$\psi(t):=\frac{t}{t^2-1}, t<0.$$
This function is obviously decreasing in the interval $(-\infty, \, -1)$. As the correspondence $j \to b_j$ is also decreasing, we conclude that the correspondence $j \to \psi(b_j)$ is increasing and, as a consequence, $\frac{ b_{1}}{ b_{1}^2-1}< \frac{ b_{q_1}}{ b_{q_1}^2-1}$.

We claim that $\frac{ b_{1}}{ b_{1}^2-1} > \theta$ or equivalently
\begin{equation}\label{eq:ff4.1}
   \frac{ \left(-\lambda - \frac{r1}{-\lambda+\frac{1}{\lambda}}\right)}{ \left(-\lambda - \frac{r1}{-\lambda+\frac{1}{\lambda}}\right)^2-1}>  \frac{-\lambda -\sqrt{\lambda^2 -4}}{2},
\end{equation}
for $\lambda=\rho(T)$.

The inequality (\ref{eq:ff4.1}) is equivalent to $g(x, r)>0$ for $x \geq \rho(T) \geq \sqrt{r_2 + 2} \geq \sqrt{r_1 + 3}$ and $r=r_1 \geq 2$, where $g: B \to \mathbb{R}$ is given by
$$g(x,r)=\frac{ \left(-x - \frac{r}{-x+\frac{1}{x}}\right)}{ \left(-x - \frac{r}{-x+\frac{1}{x}}\right)^2-1} +  \frac{x +\sqrt{x^2 -4}}{2},$$
defined on $B= \{(x,r) \, | \, x \geq \sqrt{r + 3}, \, r \geq 2\} \subset \mathbb{R}^{2}$.
\begin{figure}[H]
  \centering
  \includegraphics[width=8cm, height=5.5cm]{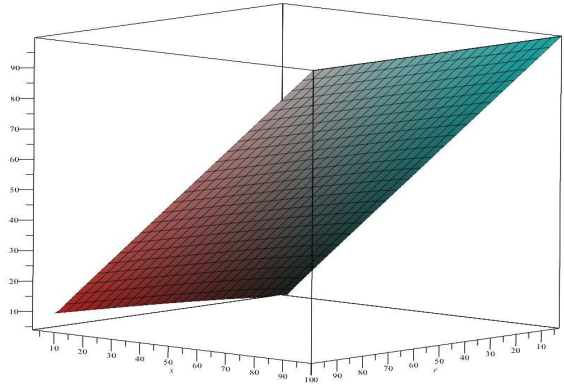}
  \caption{The  graph of $g$ on $B$. }\label{img_monster}
\end{figure}
As we can see in Figure~\ref{img_monster}, the function is always positive in this set concluding our proof.
\end{proof}

\section{Spectral radius ordering in $\mathfrak{F}(n)$} \label{sec: ordering}

In this section we provide a total ordering by the spectral radius in the family $\mathfrak{F}(n)$. In order to recall our notation, we notice that $r_1 >r_2\geq 2$ and the number $n$ of vertices  are fixed, hence each element in $\mathfrak{F}(n)$ is uniquely determined by the 3-uple $[h, q_1, q_2]$, that is,
$$\mathfrak{F}(n):=\{T=[h, q_1, q_2] \; | \; h + q_1 + q_2 =n-1-2 (r_1+r_2), \; h, q_1, q_2  \geq 2\}.$$
The degree sequence of an element $T \in \mathfrak{F}(n)$ is given by
$d:=[r_1 +1, r_2 +1, 3, 2^{n-(r_1+r_2) -2},1^{r_1+r_2+1}],$
for  a fixed pair $r_1 > r_2\geq 2$. We recall the well known result from \cite{FHM} (also \cite{Berge}):
\begin{theorem}
  If $G$ e $H$ have the same degree sequence then, there exists a 2-switch sequence transforming $G$ into $H$.
\end{theorem}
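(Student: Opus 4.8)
The plan is to prove the statement by induction on the number of vertices $n$, after identifying the vertex sets of $G$ and $H$ by a bijection under which $\deg_G(v)=\deg_H(v)$ for every $v$; such a bijection exists precisely because the two degree sequences coincide. Throughout I would use that a 2-switch preserves the degree sequence and that it is \emph{reversible}: undoing the switch that deletes $ab,cd$ and adds $ac,bd$ is itself the 2-switch that deletes $ac,bd$ and adds $ab,cd$. Hence ``reachable by a sequence of 2-switches'' is a symmetric relation, and it suffices to exhibit a single graph reachable from both $G$ and $H$. The reduction step is: fix a vertex $w$ of maximum degree $\Delta$, and transform $G$ (resp.\ $H$) by 2-switches into a graph in which $N(w)$ equals a fixed canonical set $C$; after that, deleting $w$ drops us to $n-1$ vertices with equal degree sequences, where induction applies.

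The crux is the following exchange lemma: if $w\not\sim x$, $w\sim y$ and $\deg(x)\ge\deg(y)$, then a single 2-switch makes $w\sim x$ and $w\not\sim y$ while fixing all degrees. To prove it I would set $A=N(x)\setminus\{y\}$ and $B=N(y)\setminus\{x\}$, so that $|A|-|B|=\deg(x)-\deg(y)\ge 0$, and note that $w\in B\setminus A$ since $w\sim y$, $w\not\sim x$, $w\neq x$. If we had $A\subseteq B\cup\{w\}$ then, as $w\notin A$, we would get $A\subseteq B$ and so $|A|\le|B|$; together with $|A|\ge|B|$ this forces $A=B$, contradicting $w\in B\setminus A$. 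Therefore some $z\in A\setminus(B\cup\{w\})$ exists, and it satisfies $z\sim x$, $z\not\sim y$, with $z\notin\{w,x,y\}$. The 2-switch deleting $\{wy,xz\}$ and adding $\{wx,yz\}$ is then legal ($wy,xz\in E$ while $wx,yz\notin E$, and the four vertices are distinct) and yields exactly the desired change at $w$.

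To assemble the argument, let $C$ be the set of the $\Delta$ vertices of largest degree in $V\setminus\{w\}$, ties broken by a fixed vertex ordering. While $N(w)\neq C$ there are $x\in C\setminus N(w)$ and $y\in N(w)\setminus C$; since $C$ collects the top degrees we have $\deg(x)\ge\deg(y)$, so the exchange lemma provides a 2-switch replacing $y$ by $x$ in $N(w)$ and thereby strictly increasing $|N(w)\cap C|$. As this quantity is bounded by $\Delta$, finitely many 2-switches reach a graph $G^{*}$ with $N_{G^{*}}(w)=C$, and likewise an $H^{*}$ with $N_{H^{*}}(w)=C$. Now $G^{*}$ and $H^{*}$ agree on every edge incident with $w$, so $G^{*}-w$ and $H^{*}-w$ are graphs on $V\setminus\{w\}$ with identical degree sequences; by the induction hypothesis a 2-switch sequence carries $G^{*}-w$ to $H^{*}-w$, and since none of these switches touches $w$ they act as valid 2-switches of $G^{*}$ carrying it to $H^{*}$. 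Chaining $G\leadsto G^{*}\leadsto H^{*}$ with the reverse of $H\leadsto H^{*}$ gives the sequence from $G$ to $H$, the base case $n\le 2$ being immediate because the degree sequence then determines the graph.

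I expect the main obstacle to be exactly this exchange lemma, namely guaranteeing the exchange vertex $z$. A naive attempt to force $N(w)=N_H(w)$ directly breaks down when the vertex $x$ one must add has \emph{smaller} degree than the vertex $y$ one must delete, which is why I introduce the degree-ordered canonical target $C$ and the hypothesis $\deg(x)\ge\deg(y)$. The delicate subcase is $\deg(x)=\deg(y)$, where the counting argument must still close: here $A\subseteq B$ with $|A|=|B|$ would force $A=B$ and contradict $w\in B\setminus A$, so the required $z$ persists.
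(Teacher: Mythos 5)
Your proof is correct, but there is nothing in the paper to compare it against: the paper does not prove this statement at all. It is quoted as a well-known classical result, with references to Fulkerson, Hoffman and McAndrew \cite{FHM} and to Berge \cite{Berge}, and it is used purely as a black box to conclude that any two members of $\mathfrak{F}(n)$ are connected by a sequence of 2-switches. What you have reconstructed is, in essence, the standard argument behind that citation: canonicalize both $G$ and $H$ by 2-switches so that a fixed maximum-degree vertex $w$ becomes adjacent to the $\Delta$ remaining vertices of largest degree, delete $w$, and induct on the number of vertices. Your exchange lemma is the crux and is proved correctly: from $|A|\ge |B|$ and $w\in B\setminus A$ one cannot have $A\subseteq B\cup\{w\}$, so the auxiliary vertex $z$ with $z\sim x$, $z\not\sim y$, $z\notin\{w,x,y\}$ exists, and the switch deleting $wy,xz$ and adding $wx,yz$ is legal under the paper's definition of a 2-switch. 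The remaining glue is also sound: 2-switches never change degrees, so the target set $C$ is invariant along the process; the switches produced by induction on $G^{*}-w$ avoid $w$ and hence remain legal in $G^{*}$; and reversibility of 2-switches (the inverse of a 2-switch is again a 2-switch, after relabelling the four vertices) lets you run the sequence for $H$ backwards and concatenate. The only thing your write-up adds beyond the paper's treatment is self-containedness, which is exactly what a citation trades away for brevity; conversely, the paper's choice is defensible since the result is classical and its proof is independent of the spectral techniques developed there.
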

As a consequence, each two elements in $\mathfrak{F}(n)$ are transformed into another by a sequence of 2-switch transformations.

The remarkable fact is that, as we will see next, we can reach any element  from another using only two 2-switches. Additionally, the sequence of 2-switches is closed in the family, that is, any intermediate member is also a member. Given $T=[h, q_1, q_2] \in \mathfrak{F}(n)$ we define the following operations.
\begin{itemize}
  \item $\alpha: \mathfrak{F}(n) \to \mathfrak{F}(n)$, given by  $\alpha([h, q_1, q_2])= [h-1, q_1 +1, q_2]$, for $h \geq 3$;
  \item $\beta: \mathfrak{F}(n) \to \mathfrak{F}(n)$, given by  $\beta([h, q_1, q_2])= [h, q_1-1, q_2+1]$, for $q_1 \geq 3$.
  \item $\gamma: \mathfrak{F}(n) \to \mathfrak{F}(n)$, given by  $\beta([h, q_1, q_2])= [h-1, q_1, q_2+1]$, for $h \geq 3$.
\end{itemize}

We remark that the transformations make sense for $h, q_1,q_2 \geq 1$, and the results from Section \ref{sec:ordering the family} do apply. However, we observe that the original $T=[h, q_1,q_2]$ and the transformed tree $T'=[h', q'_1,q'_2]$ must have parameters $h,h', q_1,q'_1,q_2,q'_2\geq 2$, otherwise they will not be 2-switches, because the degree sequence changes.

\begin{theorem}\label{th:abc} Let $\alpha, \beta$ and $\gamma$ be the transformations defined in $\mathfrak{F}(n)$. Let $T^*=[h_{0},2, 2]$, where $h_{0}:=n-5-2(r_1+r_2) \geq 2$. The following facts are true
   \begin{itemize}
     \item[(a)] $\alpha, \beta$ and $\gamma$  are 2-switch transformations;
     \item[(b)] $\alpha, \beta$ and $\gamma$  are index decreasing transformations;
     \item[(c)] Any $T \in\mathfrak{F}(n)$ can be obtained from $T^*$ by a sequence of $\alpha$ and $\beta$ transformations;
     \item[(d)] Any $T \in\mathfrak{F}(n)$ can be obtained from $T^*$ by a sequence of $\alpha$ and $\gamma$ transformations.
   \end{itemize}
\end{theorem}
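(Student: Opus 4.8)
The plan is to settle parts (a) and (b) by direct appeal to the constructions and ordering theorems of Section~\ref{sec:ordering the family}, and then to prove (c) and (d) by explicit reachability constructions, the only genuine content being the verification that every intermediate tree stays inside $\mathfrak{F}(n)$. For (a) I would observe that $\alpha$ and $\gamma$ are exactly the two single-increment Type~I 2-switches of Figure~\ref{TypeISwitch} (the first moving one vertex from the central path $P_h$ into the branch $P_{q_1}$, the second into $P_{q_2}$), while $\beta$ is the single-increment Type~II 2-switch of Figure~\ref{TypeIISwitch} (moving one vertex from $P_{q_1}$ into $P_{q_2}$). Since 2-switches preserve the degree sequence and, by the construction in Section~\ref{sec:family}, carry a member of $\mathfrak{F}(n)$ to another member, each of $\alpha,\beta,\gamma$ is a 2-switch transformation. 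Part (b) is then immediate: $\alpha$ and $\gamma$ are the two cases of Theorem~\ref{thm: ordering type I}, and $\beta$ is Theorem~\ref{thm: ordering type II}, so all three strictly decrease the index.

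For (c) I would use the defining relation $h+q_1+q_2 = n-1-2(r_1+r_2) = h_0+4$. Given a target $T=[h,q_1,q_2]$, first apply $\alpha$ exactly $h_0-h$ times, carrying $T^*=[h_0,2,2]$ to $[h,\,2+(h_0-h),\,2]=[h,\,q_1+q_2-2,\,2]$, and then apply $\beta$ exactly $q_2-2$ times to reach $[h,q_1,q_2]$. The two counts are nonnegative since $h\le h_0$ (because $q_1,q_2\ge 2$) and $q_2\ge 2$. To confirm that every intermediate tree lies in $\mathfrak{F}(n)$, note that during the $\alpha$-phase the central length runs through $h_0,h_0-1,\dots,h+1$, all $\ge 3$ because $h\ge 2$, so each $\alpha$ is admissible; during the $\beta$-phase the right length runs through $q_1+q_2-2,\dots,q_1+1$, all $\ge 3$ because $q_1\ge 2$, so each $\beta$ is admissible. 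Thus no coordinate ever drops below $2$.

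For (d) I note that both $\alpha$ and $\gamma$ lower $h$ by one, the former raising $q_1$ and the latter raising $q_2$. I would apply $\alpha$ exactly $q_1-2$ times and then $\gamma$ exactly $q_2-2$ times; the total number of steps is $(q_1-2)+(q_2-2)=h_0-h$, so $T^*=[h_0,2,2]$ is carried to $[h,q_1,q_2]$. Here $h$ decreases monotonically from $h_0$ down to $h$ and is acted on only while its current value is at least $h+1\ge 3$, while $q_1$ and $q_2$ only increase and hence stay $\ge 2$; so again every intermediate tree is a member of $\mathfrak{F}(n)$.

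I do not expect a genuine obstacle: the analytic difficulty has already been absorbed into Theorems~\ref{thm: ordering type I} and \ref{thm: ordering type II}, and what remains is combinatorial bookkeeping. The one point demanding care is the admissibility check in (c) and (d)—verifying that the thresholds $h\ge 3$ for $\alpha,\gamma$ and $q_1\ge 3$ for $\beta$ are never violated, equivalently that no parameter falls below $2$ along the chosen sequence. Performing all $\alpha$'s before the $\beta$'s (respectively before the $\gamma$'s) makes the relevant coordinate monotone and renders this verification transparent.
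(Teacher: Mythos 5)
Your proposal is correct and follows essentially the same route as the paper: parts (a) and (b) by identifying $\alpha,\gamma$ as Type~I and $\beta$ as Type~II switches covered by Theorems~\ref{thm: ordering type I} and \ref{thm: ordering type II}, and parts (c), (d) by applying all the $\alpha$'s first and then the $\beta$'s (resp.\ $\gamma$'s), with the same step counts the paper derives. If anything, your explicit admissibility check (that the acted-on coordinate stays $\geq 3$ throughout, so every intermediate tree remains in $\mathfrak{F}(n)$) is slightly more careful than the paper's own write-up, which leaves that verification implicit.
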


\begin{proof} From our definition of Section \ref{sec:family}, we see that $\alpha$ and $\gamma$ are Type-I 2-switches, while $\beta$ is a Type-II 2-switch and our results of Theorem~\ref{thm: ordering type I} and Theorem~\ref{thm: ordering type II} apply, hence they decrease the index. This proves (a) and (b).\\
(c) To see that, consider the tree $T^*=[h_{0} ,2, 2]$.  As $T^*=u + P_{h_{0}} \ast  S_{0} \oplus P_{2} \ast  S_{r_{1}} \oplus P_{2} \ast  S_{r_{2}},$ and  $h_{0}=n-5-2(r_1+r_2)  \geq 2$, because $n\geq 7+ 2 (r_1+r_2)$, we see that $T^* \in \mathfrak{F}(n)$. Let $T=[h',q'_1,q'_2]$ be any tree in $\mathfrak{F}(n)$ ($h', q'_1,q'_2 \geq 2$). Then we have the following\\
  $\alpha([h_{0} , 2, 2])= [h_{0}-1 , 2+1, 2],$   $\alpha([h_{0}-1 , 2+1, 2])= [h_{0}-2 , 2+2, 2],$
  and so on, until we obtain $h_{0} -k= h'$, that is
  $\alpha^{k}([h_{0} , 2, 2])= [h', 2+(h_{0}-h'), 2]$.\\
  Now we apply $\beta$ transformation $j$ times obtaining
  $\beta^{j}\alpha^{k}([h_{0} , 2, 2])= [h', 2+(h_{0} -h')-j, 2+j]$.\\
  At this point we claim that, making $2+(n-5-2(r_1+r_2) -h')-j= q'_1$ we get $2+j=q'_2$. Indeed, $2+(n-5-2(r_1+r_2) -h')-j= q'_1$ means that $j= 2+(n-5-2(r_1+r_2) -h' -q'_1) -q'_2 +q'_2= q'_2 -2 + ( n- (1+ h' +q'_1 +q'_2 +2(r_1+r_2))) =q'_2 -2$. Hence $2+j = q'_2$. Thus we conclude that there exist $k,j \in \mathbb{N}$ such that  $\beta^{j}\alpha^{k}(T^*)=T$ for any $T \in\mathfrak{F}(n)$.\\
(d) A similar reasoning as in (c) shows that $T^* \in \mathfrak{F}(n)$. We now apply $k'=q'_1-2$ times the transformation $\alpha$, arriving at $[h_0-k', q'_1,2]$. Reasoning as above we show that there is an integer $j'$ so that $h_0-k'-j'=h'$ and $q'_2=2+j'$. Hence applying now $j'$ times the transformation $\gamma$, shows that $\gamma^{j'}\alpha^{k'} (T^*) = T$.
\end{proof}

We observe that $T^*=[h_0,2,2]$ has the configuration with largest possible $h_0$. From Theorem \ref{th:abc} (b) and (c) (or from (b) and (d)), it follows that $T^*$ is the extremal element of $\mathfrak{F}(n)$: it has the maximum spectral radius.

The next result is quite remarkable in the sense that it provides a complete ordering of $\mathfrak{F}(n)$ using only $\alpha$ and $\gamma$ transformations, allowing us to find also the element of minimum index.
\begin{theorem}\label{thm:total_order}
   Consider the family $\mathfrak{F}(n)$, where $2 \leq r_1 < r_2 $,  $h_{0}=n-5-2(r_1+r_2) \geq 2$, $T^*=[h_{0},2,2] \in \mathfrak{F}(n)$ and $T_*=[2, 2, h_{0}] \in \mathfrak{F}(n)$. The following claims are true.
   \begin{itemize}
     \item[(a)]  The ordered sequence  $$\mathcal{A}:=\{T^*,\alpha(T^*), \cdots , \alpha^{h_{0}-2}(T^*),\gamma(T^*),\alpha(\gamma(T^*)), \cdots , \alpha^{h_{0}-3}(\gamma(T^*)), \cdots, \gamma^{h_{0}-2}(T^*)=T_*\}$$ is equal to $\mathfrak{F}(n)$;
     \item[(b)] The sequence $\mathcal{A}$ is ordered by the inverse lexicographic order:\\
         $[x,y,z] \succ  [x',y',z']$ iff $z'>z$ or $z'=z$ but $y'>y$.
     \item[(c)] If $[x,y,z] \succ  [x',y',z']$ then $\rho([x,y,z]) > \rho([x',y',z'])$;
     \item[(d)] The maximum (resp. minimum) index in $\mathfrak{F}(n)$ is $\rho(T^*)$ (resp. $\rho(T_*)$).
   \end{itemize}
\end{theorem}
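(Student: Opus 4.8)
The plan is to treat parts (a) and (b) as bookkeeping, reduce (c) to strict monotonicity of $\rho$ along the consecutive members of the chain $\mathcal{A}$, isolate a single genuinely new inequality as the crux, and then read off (d).

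First I would make the enumeration explicit. Since $\gamma^{j}(T^*)=[h_0-j,2,2+j]$ and $\alpha$ sends $[h,q_1,q_2]$ to $[h-1,q_1+1,q_2]$, one gets $\alpha^{k}\gamma^{j}(T^*)=[h_0-j-k,\,2+k,\,2+j]$, valid exactly for $0\le j\le h_0-2$ and $0\le k\le h_0-j-2$. Setting $j=q_2-2$ and $k=q_1-2$ exhibits this as a bijection onto $\{[h,q_1,q_2]\,:\,h+q_1+q_2=h_0+4,\ h,q_1,q_2\ge 2\}=\mathfrak{F}(n)$, which is (a). Reading the traversal off, the third coordinate $q_2=2+j$ is constant on each $\gamma$-block and increases from block to block, while inside a block the second coordinate $q_1=2+k$ increases; this is precisely the inverse lexicographic order of (b), with $T^*$ first and $T_*=\gamma^{h_0-2}(T^*)$ last. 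For (c) it then suffices to prove that $\rho$ strictly decreases along the consecutive members of $\mathcal{A}$, because $\mathcal{A}$ lists $\mathfrak{F}(n)$ in $\succ$-decreasing order and transitivity yields $[x,y,z]\succ[x',y',z']\Rightarrow\rho([x,y,z])>\rho([x',y',z'])$. Consecutive steps are of two kinds: inside a block the step is an $\alpha$-transformation, so $\rho$ strictly decreases by Theorem~\ref{thm: ordering type I}; the remaining steps are the \emph{block transitions}, from the last member $E_j:=[2,h_0-j,2+j]$ of block $j$ to the first member $S_{j+1}:=[h_0-j-1,2,3+j]$ of block $j+1$.

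The block transition $\rho(E_j)>\rho(S_{j+1})$ is the main obstacle, and it does not follow from monotonicity of $\alpha,\beta,\gamma$ alone: from $E_j$ only $\beta$ is applicable and it preserves $h=2$, so no index-decreasing path reaches $S_{j+1}$ (which has $h>2$), and symmetrically none runs the other way, so $E_j$ and $S_{j+1}$ are incomparable for these moves and a direct comparison is unavoidable. I would attack it with the \textbf{Diagonalize} method of Section~\ref{sec:family}. Taking $\lambda=\rho(E_j)$, the root equation for $E_j$ reads $-\lambda-\frac{1}{a_2}-\frac{1}{b_{h_0-j}}-\frac{1}{c_{2+j}}=0$; substituting this value of $-\lambda$ into the root value $f_{S_{j+1}}(u)$ turns the desired $f_{S_{j+1}}(u)<0$ into the purely sequential inequality
\[ \Big(\tfrac{1}{a_2}-\tfrac{1}{b_2}\Big)+\Big(\tfrac{1}{b_{h_0-j}}-\tfrac{1}{a_{h_0-j-1}}\Big)+\Big(\tfrac{1}{c_{2+j}}-\tfrac{1}{c_{3+j}}\Big)<0 . \]
By Theorems~\ref{thm:results_on_the_sequence} and~\ref{thm:results_on_the_sequence zjr}, namely $a_j\uparrow\theta$, $b_j,c_j\downarrow\theta$ and $b_j>\theta>a_j$, the first group is positive while the other two are negative, so the real content is to show that the two negative groups dominate for every admissible $j$. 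I expect to control this exactly as the Type~II proof controls its ``monster'' function $g$: rewrite the three groups through the closed forms of $a_j,b_j,c_j$ and the fixed point $\theta$ from Theorem~\ref{thm:results_on_the_sequence zjr}(a), and reduce to positivity of a resulting one- or two-variable auxiliary function on the relevant range of $\lambda=\rho(E_j)$ and of $2\le r_1<r_2$. Once this is in hand, transitivity gives (c), and (d) is then immediate: $T^*$ is the $\succ$-maximum and $T_*$ the $\succ$-minimum of $\mathfrak{F}(n)$, hence by (c) they realize the maximum and the minimum of the spectral radius.
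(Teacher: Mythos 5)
Your proposal reproduces the paper's own proof essentially step for step: (a)/(b)/(d) are the same bookkeeping, within-block steps are handled by Theorem~\ref{thm: ordering type I}, and your block-transition inequality $\rho([2,h_0-j,2+j])>\rho([h_0-j-1,2,3+j])$ is precisely the paper's Lemma~\ref{lem:monot_gamma}, which is likewise proved by running \textbf{Diagonalize} at $\lambda=\rho(E_j)$ and reducing to an auxiliary-function positivity check. The only (cosmetic) difference is in the final reduction: the paper solves the root equation for $c_{3+j}=\frac{1}{a_2}+\frac{1}{b_{h_0-j}}$, thereby eliminating the $c$-sequence (and with it $r_2$ and $j$) so that, after using monotonicity in $h_0-j$, the check concerns a genuinely two-variable function (verified graphically), whereas your three-group form still carries $r_2$ and $j$ and would need that same substitution to reach the low-dimensional check you anticipate.
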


Before we prove Theorem~\ref{thm:total_order} we need a technical lemma.

\begin{lemma}\label{lem:monot_gamma} If $T^*=[h_{0}, 2 , 2] \in \mathfrak{F}(n)$, then
$\rho(\alpha^{h_{0}-j - 2}(\gamma^{j}(T^*)) > \rho(\gamma^{j+1}(T^*))$, for $j=0,1,2,..., h_{0}- 2$, that is,
$$\rho([2, h_{0} -j, 2]) > \rho([h_{0} -(j+1), 2 , 2+(j+1)]).$$
\end{lemma}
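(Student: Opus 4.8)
The plan is to apply the comparison method of Section~\ref{sec:family}. Writing out the iterates, $\gamma^{j}(T^*)=[h_0-j,2,2+j]$, so that $T_L:=\alpha^{h_0-j-2}(\gamma^{j}(T^*))=[2,\,h_0-j,\,2+j]$ and $T_R:=\gamma^{j+1}(T^*)=[h_0-j-1,\,2,\,3+j]$ (note that the right-hand branch of $T_L$ has length $2+j$, so that both trees carry the sun $S_{r_2}$). Taking $\lambda=\rho(T_L)$ and running {\bf Diagonalize}$(\cdot,-\lambda)$ on each tree, Theorem~\ref{th-diagonalization} reduces the claim $\rho(T_L)>\rho(T_R)$ to proving $f_{T_R}(u)<0$, the negativity of all the entries $a_j,b_j,c_j$ being automatic from Theorems~\ref{thm:results_on_the_sequence} and~\ref{thm:results_on_the_sequence zjr} once $r_*<r_1<r_2\le r^*$.

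Eliminating $-\lambda$ by means of $f_{T_L}(u)=0$ and writing $m:=h_0-j$, $p:=2+j$, the sign to be determined is that of
\begin{equation*}
 f_{T_R}(u)=\Big(\tfrac{1}{a_2}-\tfrac{1}{a_{m-1}}\Big)+\Big(\tfrac{1}{b_{m}}-\tfrac{1}{b_2}\Big)+\Big(\tfrac{1}{c_{p}}-\tfrac{1}{c_{p+1}}\Big)=:A+B+C .
\end{equation*}
Here lies the main obstacle: since $a_j$ increases and $b_j,c_j$ decrease, one has $A>0$, $B>0$ and $C<0$, so the term-by-term monotonicity that settled Type~I is inconclusive and in fact points the wrong way. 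The real content is that the single term $C$, produced by pushing the heavier sun $S_{r_2}$ one step farther from $u$, outweighs the two rearrangement terms $A,B$; this is precisely where $r_2>r_1$ enters.

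To quantify the domination I would first use that $a_j$ and $b_j$ tend to the common fixed point $\theta$ from opposite sides, so that $a_{m-1}<\theta<b_m$ and the $1/\theta$ contributions cancel, yielding the clean $m$-independent bound $A+B<\tfrac{1}{a_2}-\tfrac{1}{b_2}$. For the favourable term I would invoke the identity $c_{p+1}=\tfrac{1}{a_2}+\tfrac{1}{b_m}$ coming from $f_{T_L}(u)=0$ (the same device as in the proof of Theorem~\ref{thm: ordering type II}) together with $\tfrac{1}{c_p}=-\lambda-\tfrac{1}{a_2}-\tfrac{1}{b_m}$, which makes $-C$ explicit. Combining the two, $f_{T_R}(u)<0$ reduces to
\begin{equation*}
 \lambda+\frac{a_2 b_m}{a_2+b_m}+\frac{1}{b_m}+\frac{1}{b_2}>0 ,
\end{equation*}
where $a_2=-\lambda+\tfrac{1}{\lambda}$ and $b_2,b_m$ are the explicit functions of $\lambda$ and $r_1$ from Theorem~\ref{thm:results_on_the_sequence zjr}(a).

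The last step is to discharge this inequality. Checking the (essentially monotone) dependence on $b_m\in(\theta,b_2]$ lets one replace $b_m$ by the extremal value $b_2$, reducing matters to a two-variable function $G(\lambda,r_1)>0$ on the region $\{\lambda\ge\sqrt{r_1+3},\,r_1\ge2\}$ --- the very domain $B$ of Theorem~\ref{thm: ordering type II}, and exactly the range into which $r_2\ge r_1+1$ (so $\lambda\ge\sqrt{r_2+2}\ge\sqrt{r_1+3}$) forces $\lambda$. Positivity of $G$, which is near $0$ only at the corner $(\sqrt5,2)$ and grows thereafter, I would verify graphically as for $\phi$ and $g$. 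A convenient sanity anchor is the extreme case $j=h_0-3$, where $T_R=\beta(T_L)$ is a single Type~II switch and the claim is immediate from Theorem~\ref{thm: ordering type II}; the genuinely new cases are $j\le h_0-4$. I expect the two delicate points to be the justification that testing $b_m=b_2$ suffices (the opposite regime, $\lambda$ large, being trivially positive) and the behaviour of $G$ near the corner of $B$.
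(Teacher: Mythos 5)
Your reduction is sound as far as it goes, and up to the final inequality it is essentially the paper's own argument: the same comparison method, the same elimination of $-\lambda$ through $f_{T_L}(u)=0$, and the same identity $c_{p+1}=\frac{1}{a_2}+\frac{1}{b_m}$ (you also correctly work with $T_L=[2,h_0-j,2+j]$, silently fixing the typo in the lemma's statement). The gap is in the last step, and it is genuine. First, the dependence of $H(t):=\lambda+\frac{a_2t}{a_2+t}+\frac{1}{t}+\frac{1}{b_2}$ on $t=b_m$ is \emph{not} monotone: $H'(t)=\frac{a_2^2}{(a_2+t)^2}-\frac{1}{t^2}$ has the sign of $(1+a_2)(1+t)-1$, which is strictly decreasing in $t$ (since $1+a_2<0$), so $H'$ changes sign at most once, from positive to negative; hence $\min_{t\in[\theta,b_2]}H=\min\{H(\theta),H(b_2)\}$ and \emph{both} endpoints must be checked, not just $t=b_2$. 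Second, and fatally for the plan as written, the two-variable function you propose to verify graphically, $G(\lambda,r_1)=H(b_2)=\lambda+\frac{a_2b_2}{a_2+b_2}+\frac{2}{b_2}$, is \emph{not} positive on $B$: at the corner $(\lambda,r_1)=(\sqrt5,2)$ one has exactly $a_2=-\frac{4}{\sqrt5}$, $b_1=-\frac{\sqrt5}{2}$, $b_2=-\frac{3}{\sqrt5}$, whence
$$G(\sqrt5,2)=\sqrt5\Bigl(1-\tfrac{12}{35}-\tfrac{2}{3}\Bigr)=-\frac{\sqrt5}{105}<0,$$
and $G(\cdot,2)$ stays negative until $\lambda\approx 2.245$. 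So the graphical check you defer to would come out the wrong way, and the claim that $G$ is ``near $0$ only at the corner and grows thereafter'' is false. (The endpoint you do not check is the harmless one: a short computation gives $H(\theta)=-f$, where $f=\theta-\frac{1}{b_2}-\frac{1}{1/a_2+1/\theta}$ is precisely the paper's limit function, and $f<0$ does hold on this region, its value at the corner being about $-0.023$.)

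The loss comes from decoupling the two sequences. Your bound $A+B<\frac{1}{a_2}-\frac{1}{b_2}$ discards the joint cancellation of $a_{m-1}$ and $b_m$, which carry the \emph{same} index $m$, and you then let $b_m$ range over all of $(\theta,b_2]$ as a free parameter; this is too generous exactly at $m=2$ (the case $j=h_0-2$), where in fact $B=0$ and $A<0$, so the discarded terms are what saves the inequality there. The paper never separates the sequences: it writes the root value as $g(m)=a_m-\frac{1}{b_2}-\frac{1}{1/a_2+1/b_m}$, observes that $g$ is increasing in $m$ (because $a_m$ increases while $b_m$ decreases), and therefore needs only the single limit check $\lim_{m\to\infty}g=\theta-\frac{1}{b_2}-\frac{1}{1/a_2+1/\theta}<0$ on $\{r_1\ge2,\ \lambda\ge\sqrt{r_1+3}\}$, which is exactly your $\theta$-endpoint and is where the worst case genuinely sits. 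To salvage your route you would have to restore this coupling, or exclude $m=2$ (that case is a single $\gamma$-move, already covered by Theorem~\ref{thm: ordering type I}) and redo the endpoint analysis on $(\theta,b_4]$ checking both ends, or establish a lower bound on $\lambda$ strictly better than $\sqrt{r_1+3}$; none of these is in the proposal as written.
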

\begin{proof}
We will consider the first case $j=0$. The rest of the cases are identical. Thus, we must prove that
$$\rho([2, h_{0}, 2]) > \rho([h_{0} -1, 2 , 3]).$$

Taking $\lambda=\rho([2, h_{0}, 2])$ and applying \texttt{Diagonalize}($[2, h_{0}, 2], -\lambda$), we have
$$-\lambda -\frac{1}{a_{2}} -\frac{1}{b_{h_0}} -\frac{1}{c_{2}} =0.$$
From this, we obtain $c_{3}= \frac{1}{\frac{1}{a_{2}} +\frac{1}{b_{h_0}}}$.

Analogously, applying \texttt{Diagonalize}($[h_{0} -1, 2 , 3], -\lambda$), we have
$$-\lambda -\frac{1}{a_{h_0 -1}} -\frac{1}{b_{2}} -\frac{1}{c_{3}} =:\mathcal{I}.$$
We need to show that $\mathcal{I}<0$. Writing $\mathcal{I}= a_{h_0 }-\frac{1}{b_{2}} -\frac{1}{\frac{1}{a_{2}} +\frac{1}{b_{h_0}}}$, we conclude that $\mathcal{I}<0$ if the function
$$g(\lambda, r_1, h_{0}):= a_{h_0 }-\frac{1}{b_{2}} -\frac{1}{\frac{1}{a_{2}} +\frac{1}{b_{h_0}}}$$
is always negative for $r_1 \geq 2$, $h_{0} \geq 2$ and $\lambda \geq \sqrt{r_1 + 3}$.

At a first glance, we can not plot a graph as we did before because we have three variables. However, if we consider the variation of the variable $h_{0}$ we observe that the correspondence $h_{0} \to g(\lambda, r_1, h_{0})$ is monotonously increasing because
$$\frac{d}{dh_{0}}g(\lambda, r_1, h_{0}) =\frac{d a_{h_0 }}{dh_{0}} - \frac{1}{\left(\frac{1}{a_{2}} +\frac{1}{b_{h_0}}\right)^2} \frac{1}{b_{h_0}^2}\frac{d b_{h_0 }}{dh_{0}}>0,$$
since $\frac{d a_{h_0 }}{dh_{0}}>0$ and $\frac{d b_{h_0 }}{dh_{0}}<0$.

Hence we just need to show that the limit function
$$f(\lambda, r_1):=\lim_{h_{0}\to \infty }g(\lambda, r_1, h_{0})= \theta - \frac{1}{b_{2}} -\frac{1}{\frac{1}{a_{2}} +\frac{1}{\theta}}$$
is always negative for $C:=\{(\lambda,r1)\,|\, r_1 \geq 2, \, \lambda \geq \sqrt{r_1 + 3}\}$, as shown on Figure~\ref{fig:total_order}, concluding our proof.
\begin{figure}[H]
  \centering
  \includegraphics[width=6cm, height=4.5cm]{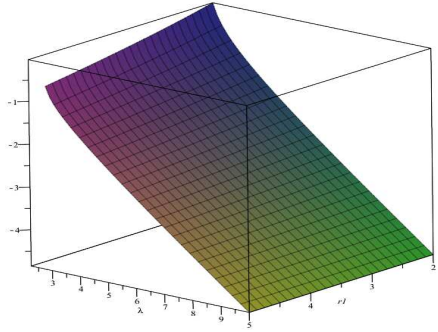}
  \caption{The  graph of $f$ on $C$. }\label{fig:total_order}
\end{figure}
\end{proof}

\begin{proof}\textbf{(of Theorem~\ref{thm:total_order})}\\
To see (a) we first see that both transformations $\alpha$ and $\gamma$ decrease the index. By Theorem~\ref{th:abc} (d), we can reach any configuration from $T^*$, showing that every configuration appears in the sequence $\mathcal{A}$.\\
For (b), we notice that the inverse lexicographic order:\\
$[x,y,z] \succ  [x',y',z']$ if an only if $z'>z$ or $z'=z$ but $y'>y$,
is naturally produced in $\mathcal{A}$. For instance comparing $[x,y,z]$ with $[x',y',z']= \alpha([x,y,z])= [x-1, y+1, z]$  we obtain $z'=z$ but $y'>y$. The only possible difficulty is to compare, for example, $[x,y,z]= \alpha^{h_{0}-2}(T^*)= [2, h_{0}-2,2] $ with $[x',y',z']=\gamma(T^*)= [h_{0}-1, 2, 3]$, in this case we have $z'>z$.\\
For (c) we can use the same reasoning, if the next element is obtained from the previous one by $\alpha$ the index decreases by Theorem~\ref{thm: ordering type I}. Again, it remains to analyse the case  $[x,y,z]= \alpha^{h_{0}-2}(T^*)= [2, h_{0}-2,2] $ and $[x',y',z']=\gamma(T^*)= [h_{0}-1, 2, 3]$. In this case the index decrease by Lemma~\ref{lem:monot_gamma}.\\
The item (d) is a direct consequence of the previous items.
\end{proof}

\begin{example} \label{ex: final_order_total} We consider $\mathfrak{F}(23)$, with $r_1=2$, $r_2=3$ and $h_{0}=n-5-2(r_1+r_2)=8$. In this case $T^*=[h_{0},2,2]=[8,2,2]$ and $T_*=[2,2, h_{0}]=[2,2,8]$. We will use the same procedure described in the proof of Theorem~\ref{thm:total_order} to build a table where we show the spectral radius of each intermediary tree:\\

    \begin{center}
    \begin{tabular}{|c|c|c|}
      \hline
      Transf. / $\mathcal{A}$ & Tree & Index\\
      \hline
       $T^*$    &[8, 2, 2]& 2.31431268823172996316982502630\\
       $\alpha(T^*)$    &[7, 3, 2]& 2.30752321205156788164155922354\\
       $\alpha^{2}(T^*)$      &[6, 4, 2]& 2.30509257122848263666229555974\\
       $\alpha^{3}(T^*)$     &[5, 5, 2]& 2.30414417603593895847264293478\\
       $\alpha^{4}(T^*)$     &[4, 6, 2]& 2.30348720654135784657134525755\\
       $\alpha^{5}(T^*)$     &[3, 7, 2]& 2.30226165044440472718571097461\\
       $\alpha^{6}(T^*)$     &[2, 8, 2]& 2.29881642949995094980856594643\\
       $\gamma(T^*)$     &[7, 2, 3]& 2.28520768467980257500859073365\\
       $\alpha(\gamma(T^*))$    &[6, 3, 3]& 2.28076523286917478390041282633\\
       $\alpha^{2}(\gamma(T^*))$    &[5, 4, 3]& 2.27913084342903308996825366690\\
       $\alpha^{3}(\gamma(T^*))$    &[4, 5, 3]& 2.27834791245706879712729095155\\
       $\alpha^{4}(\gamma(T^*))$    &[3, 6, 3]& 2.27748824925244285093685838480\\
       $\alpha^{5}(\gamma(T^*))$    &[2, 7, 3]& 2.27554403106324050144208754160\\
       $\gamma^{2}(T^*)$    &[6, 2, 4]& 2.27010998510725135104117051475\\
       $\alpha(\gamma^{2}(T^*))$    &[5, 3, 4]& 2.26762484634172519636930335282\\
       $\alpha^{2}(\gamma^{2}(T^*))$    &[4, 4, 4]& 2.26667762008239070931668388638\\
       $\alpha^{3}(\gamma^{2}(T^*))$    &[3, 5, 4]& 2.26605728367174815669677409819\\
       $\alpha^{4}(\gamma^{2}(T^*))$    &[2, 6, 4]& 2.26506821261118740374393886088\\
       $\gamma^{3}(T^*)$    &[5, 2, 5]& 2.26290253458453744084697620016\\
       $\alpha(\gamma^{3}(T^*))$    &[4, 3, 5]& 2.26171078345443097808224085587\\
       $\alpha^{2}(\gamma^{3}(T^*))$    &[3, 4, 5]& 2.26119844487818869745804831320\\
       $\alpha^{3}(\gamma^{3}(T^*))$    &[2, 5, 5]& 2.26069897200749878293447592468\\
       $\gamma^{4}(T^*)$    &[4, 2, 6]& 2.25980268994372236598891968054\\
       $\alpha(\gamma^{4}(T^*))$    &[3, 3, 6]& 2.25927957177517211016191460326\\
       $\alpha^{2}(\gamma^{4}(T^*))$    &[2, 4, 6]& 2.25898741243972985580277387992\\
       $\gamma^{5}(T^*)$    &[3, 2, 7]& 2.25857320563154910353684549335\\
       $\alpha(\gamma^{5}(T^*))$    &[2, 3, 7]& 2.25834278165321357168906331906\\
       $\gamma^{6}(T^*)=T_*$    &[2, 2, 8]& 2.25810972712429442797185863240\\
      \hline
    \end{tabular}
    \end{center}

    As expected, the index ordering in $\mathfrak{F}(23)$ is total.
\end{example}

\begin{remark}\label{rem:applic_total_order}
Theorem~\ref{thm:total_order} has a powerful application, not only we can obtain the extremal indices in $\mathfrak{F}(n)$, but given any $[x,y,z] \in \mathfrak{F}(n)$ and a 2-switch $F$, such that $F([x,y,z])=[x',y',z']$ we can immediately say if $F$ increases or decreases the index by using the order relation in Theorem~\ref{thm:total_order} (c).
\end{remark}

\section{Additional considerations on the 2-switch spectral ordering problem} \label{sec: additional considerations}

We believe that transformations by 2-switches which are monotonous with respect to the spectral radius  consist of a powerful tool for spectral graph theory problems. We just recall that, following \cite{Lin2006} we have the following operation which was a central piece to prove that all the starlike trees are completely ordered by their index in our recent work \cite{Oli18}.
\begin{lemma}\label{alpha}
\cite{Cve97,Lin2006}. Let $u$ be a vertex of a non-trivial connected graph $G$, and let $G^0_{k,l}$ denote the graph obtained from $G$ by adding pendant paths of length $k$ and $l$ at $u$. If $k \geq l \geq 1$, then $\lambda_1(G^0_{k,l})> \lambda_1(G^0_{k+1,l-1})$.
\end{lemma}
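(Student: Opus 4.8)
My plan is to compare the two characteristic polynomials at the spectral radius of the larger graph and thereby reduce everything to a single sign. Set $\mu:=\lambda_1(G^0_{k,l})$, write $\phi(H)=\phi(H,\lambda)$ for the characteristic polynomial of a graph $H$, and let $p_m:=\phi(P_m,\lambda)$ be the path polynomials, with $p_0=1$, $p_{-1}=0$ and $p_m=\lambda p_{m-1}-p_{m-2}$. Both $\phi(G^0_{k,l})$ and $\phi(G^0_{k+1,l-1})$ are monic of the same degree, and $\mu$ is the largest root of the first. Hence it suffices to prove $\phi(G^0_{k+1,l-1})(\mu)>0$: this places $\mu$ strictly to the right of the largest root of $\phi(G^0_{k+1,l-1})$ and so yields $\lambda_1(G^0_{k+1,l-1})<\mu$, which is exactly the claim.

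The computational engine is an exact formula for hanging two pendant paths at $u$. Using Schwenk's pendant-path identity, which for a path on $m$ vertices attached at $u$ reads $p_m\phi(G)-p_{m-1}\phi(G-u)$, once for each leg, I obtain $\phi(G^0_{k,l})=p_kp_l\,\phi(G)-(p_{k-1}p_l+p_kp_{l-1})\,\phi(G-u)$. Subtracting the analogous expression for $(k+1,l-1)$ and collapsing the coefficients with the two determinant-type path identities $p_kp_l-p_{k+1}p_{l-1}=p_{k-l}$ and $p_{k-1}p_l-p_{k+1}p_{l-2}=\lambda\,p_{k-l}$, the four-term difference condenses into the single product
\[ \phi(G^0_{k,l})-\phi(G^0_{k+1,l-1})=p_{k-l}\,\bigl(\phi(G)-\lambda\,\phi(G-u)\bigr). \]
Evaluating at $\mu$, where the first polynomial vanishes, gives $\phi(G^0_{k+1,l-1})(\mu)=-\,p_{k-l}(\mu)\,\bigl(\phi(G)(\mu)-\mu\,\phi(G-u)(\mu)\bigr)$.

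It then remains to read off two signs, both of which follow from the monotonicity of $\lambda_1$ under passing to proper subgraphs. Since $l\geq1$, the path $P_{k-l}$ is a proper subgraph of the connected graph $G^0_{k,l}$, so $\mu>\lambda_1(P_{k-l})$ and therefore $p_{k-l}(\mu)>0$. For the second factor, Schwenk's vertex expansion at $u$ gives $\phi(G)-\lambda\phi(G-u)=-\sum_{w\sim u}\phi(G-u-w)$ when $G$ is a tree (with an additional sum of nonnegative cycle terms in the general case); each $\phi(G-u-w)$ is positive at $\mu$ because $\mu>\lambda_1(G)\geq\lambda_1(G-u-w)$, so $\phi(G)(\mu)-\mu\,\phi(G-u)(\mu)<0$. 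The product is thus negative, $\phi(G^0_{k+1,l-1})(\mu)>0$, and the lemma follows.

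The hard part is not the sign bookkeeping but \emph{finding the clean factorization}: the naive difference is a four-term polynomial, and only after the two path identities are applied does it reduce to $p_{k-l}\,(\phi(G)-\lambda\phi(G-u))$; deriving those identities from the Binet form $p_m=(r^{m+1}-r^{-m-1})/(r-r^{-1})$ with $r+r^{-1}=\lambda$, and keeping the signs straight, is where errors are most likely. In the tree setting relevant to this paper I could bypass the characteristic polynomial entirely and run \textbf{Diagonalize} at the root $u$: for $\lambda=\mu$ the two legs contribute $-1/a_k-1/a_l$ to $f(u)$ in $G^0_{k,l}$ and $-1/a_{k+1}-1/a_{l-1}$ in $G^0_{k+1,l-1}$, while the $G$-part contributes the same value in both, so that $f_{G^0_{k+1,l-1}}(u)=(a_{k+2}-a_{k+1})-(a_{l+1}-a_l)$. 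Because the sequence $a_j$ of Theorem~\ref{thm:results_on_the_sequence}(d) is increasing with strictly decreasing increments and $k+1>l$, this quantity is negative, which by our comparison method gives $\lambda_1(G^0_{k+1,l-1})<\lambda_1(G^0_{k,l})$ at once.
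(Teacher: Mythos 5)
There is one genuine logical misstep in your main argument, though it is repairable. You claim that $\phi(G^0_{k+1,l-1})(\mu)>0$ ``places $\mu$ strictly to the right of the largest root of $\phi(G^0_{k+1,l-1})$''. Positivity of a monic polynomial at a single point does not do that: a monic polynomial is also positive on intervals lying below its largest root (e.g.\ $x^3-x$ at $x=-1/2$). Two one-line patches: (i) interlacing --- deleting the pendant end vertex of the $(k+1)$-leg of $G^0_{k+1,l-1}$ leaves $G^0_{k,l-1}$, so $\lambda_2(G^0_{k+1,l-1})\le\lambda_1(G^0_{k,l-1})<\lambda_1(G^0_{k,l})=\mu$, the last inequality because $G^0_{k,l-1}$ is a proper subgraph of the connected graph $G^0_{k,l}$; hence at most one root of $\phi(G^0_{k+1,l-1})$ lies in $[\mu,\infty)$, and then positivity at $\mu$ really does force $\lambda_1(G^0_{k+1,l-1})<\mu$; or (ii) simply observe that your two sign computations are valid for every $x\ge\mu$, not only at $x=\mu$ (they only use that $x$ exceeds the spectral radii of various proper subgraphs), so $\phi(G^0_{k+1,l-1})>0$ on all of $[\mu,\infty)$ and therefore has no root there. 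With either patch your proof is correct: I verified the attachment formula $\phi(G^0_{k,l})=p_kp_l\phi(G)-(p_{k-1}p_l+p_kp_{l-1})\phi(G-u)$, both identities $p_kp_l-p_{k+1}p_{l-1}=p_{k-l}$ and $p_{k-1}p_l-p_{k+1}p_{l-2}=\lambda p_{k-l}$, and the two sign claims (non-triviality of $G$ is exactly what guarantees $u$ has a neighbour, and $l\ge 1$ is what makes $G$ a proper subgraph of the connected graph $G^0_{k,l}$).

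Your route is also genuinely different from the paper's. The paper does not prove Lemma~\ref{alpha} via characteristic polynomials at all: it quotes it from the literature, and in Section~\ref{sec: additional considerations} proves the equivalent Theorem~\ref{thm: ordering new type} by the \textbf{Diagonalize} comparison method, reducing the claim to $a_{q_1}+\frac{1}{a_{q_1}}<a_{q_2+1}+\frac{1}{a_{q_2+1}}$ and using that $x\mapsto x+\frac{1}{x}$ is increasing on $(-\infty,-1)$ together with the monotonicity of $(a_j)$. Your polynomial argument buys generality and self-containedness: it works verbatim for an arbitrary connected graph $G$, cycles included, which is the generality in which the lemma is actually stated, whereas the paper's method is confined to trees and rests on the machinery of Sections~\ref{sec:family}--\ref{sec:analytical properties}. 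Your closing \textbf{Diagonalize} sketch is essentially the paper's own proof, with one caveat: you conclude from ``strictly decreasing increments'' of $(a_j)$, a fact that is true but is not part of Theorem~\ref{thm:results_on_the_sequence}(d) and is not proven in your sketch (it needs a short computation with the closed form $a_j=\theta-\frac{\sqrt{\lambda^2-4}}{(\theta^2)^j-1}$); the paper's $x+\frac{1}{x}$ trick is designed precisely to avoid that extra verification.
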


We can rewrite this result as a 2-switch spectral ordering problem. Define $\mathfrak{G}$ the family of all trees $T$ given by Figure~\ref{fig:new_fam},
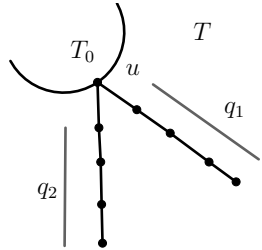
\begin{figure}[H]
  \centering
\definecolor{wrwrwr}{rgb}{0.3803921568627451,0.3803921568627451,0.3803921568627451}
\begin{tikzpicture}[line cap=round,line join=round,>=triangle 45,x=1cm,y=1cm,scale=0.6, every node/.style={scale=0.8}]
\clip(-9,-3.0) rectangle (9,3.3);
\draw [shift={(-2.2,2.66)},line width=1pt]  plot[domain=-2.6446240487107397:0.49696860487905337,variable=\t]({1*1.3423859355639864*cos(\t r)+0*1.3423859355639864*sin(\t r)},{0*1.3423859355639864*cos(\t r)+1*1.3423859355639864*sin(\t r)});
\draw [line width=1pt] (-1.4507114586748788,1.5461927088410352)-- (-0.58,0.94);
\draw [line width=1pt] (-0.58,0.94)-- (0.16,0.42);
\draw [line width=1pt] (0.16,0.42)-- (1.02,-0.22);
\draw [line width=1pt] (-1.4507114586748788,1.5461927088410352)-- (-1.42,0.54);
\draw [line width=1pt] (-1.42,0.54)-- (-1.38,-0.22);
\draw [line width=1pt] (-1.38,-0.22)-- (-1.36,-1.16);
\draw [line width=1pt] (-1.36,-1.16)-- (-1.34,-2.02);
\draw [line width=1pt] (1.02,-0.22)-- (1.62,-0.66);
\draw (-1.02,2.14) node[anchor=north west] {$u$};
\draw (-2.24,2.66) node[anchor=north west] {$T_0$};
\draw (0.48,3.06) node[anchor=north west] {$T$};
\draw [line width=1pt,color=wrwrwr] (-2.16,0.54)-- (-2.18,-2.08);
\draw [line width=1pt,color=wrwrwr] (-0.22,1.5)-- (2.12,-0.16);
\draw (1.16,1.3) node[anchor=north west] {$q_1$};
\draw (-2.98,-0.5) node[anchor=north west] {$q_2$};
\begin{scriptsize}
\draw [fill=black] (-1.4507114586748788,1.5461927088410352) circle (2.5pt);
\draw [fill=black] (-0.58,0.94) circle (2.5pt);
\draw [fill=black] (0.16,0.42) circle (2.5pt);
\draw [fill=black] (1.02,-0.22) circle (2.5pt);
\draw [fill=black] (-1.42,0.54) circle (2.5pt);
\draw [fill=black] (-1.38,-0.22) circle (2.5pt);
\draw [fill=black] (-1.36,-1.16) circle (2.5pt);
\draw [fill=black] (-1.34,-2.02) circle (2.5pt);
\draw [fill=black] (1.62,-0.66) circle (2.5pt);
\end{scriptsize}
\end{tikzpicture} 
  \caption{The family of trees $\mathfrak{G}$.}\label{fig:new_fam}
\end{figure}
with the following constraints.
\begin{itemize}
  \item[(1)] $q_1, q_2 \geq 2$;
  \item[(2)] $T_0$ is any fixed tree.
\end{itemize}
Notice that, in this case, $n=|T|= |T_0| + q_1 + q_2$.

If we are interested in to the maximization/minimization of the index (spectral radius) of trees with a fixed degree sequence, in $\mathfrak{F}(n)$ we get the degree sequence $$d:=[d_{T_0},  2^{(q_1+q_2) -2},1^{2}],$$
where $d_{T_0}$ is the sequence of degrees in $T_0$ including the root $u$.
In this way each element in $\mathfrak{G}$, with $n$ vertices and degree sequence $d$ is uniquely determined by the pair $[q_1, q_2]$ that is,
$$\mathfrak{G}(n):=\{[q_1, q_2] \; | \; n=|T_0| + q_1 + q_2\}.$$
To preserve the degree sequence we can do a 2-switch as in Figure~\ref{fig:new_fam_2_switch}, obtaining a new tree $T'$.

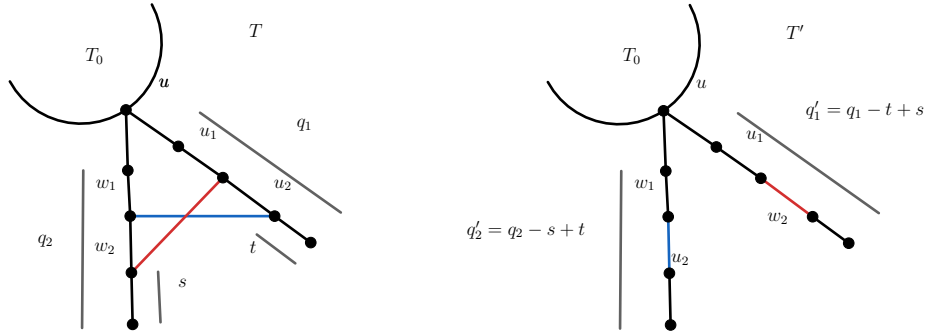
\begin{figure}[H]
  \centering
\definecolor{wrwrwr}{rgb}{0.3803921568627451,0.3803921568627451,0.3803921568627451}
\definecolor{rvwvcq}{rgb}{0.08235294117647059,0.396078431372549,0.7529411764705882}
\definecolor{dtsfsf}{rgb}{0.8274509803921568,0.1843137254901961,0.1843137254901961}
\begin{tikzpicture}[line cap=round,line join=round,>=triangle 45,x=1cm,y=1cm,scale=0.8, every node/.style={scale=0.6}]
\clip(-5.0,-3.0) rectangle (13.0,3.5);
\draw [shift={(6.74,2.65)},line width=1pt]  plot[domain=-2.6446240487107397:0.49696860487905337,variable=\t]({1*1.3423859355639873*cos(\t r)+0*1.3423859355639873*sin(\t r)},{0*1.3423859355639873*cos(\t r)+1*1.3423859355639873*sin(\t r)});
\draw [line width=1pt] (7.48209399038635,1.5313863448748422)-- (8.36,0.93);
\draw [line width=1pt] (8.36,0.93)-- (9.1,0.41);
\draw [line width=1pt,color=dtsfsf] (9.1,0.41)-- (9.96,-0.23);
\draw [line width=1pt] (7.48209399038635,1.5313863448748422)-- (7.52,0.53);
\draw [line width=1pt] (7.52,0.53)-- (7.56,-0.23);
\draw [line width=1pt,color=rvwvcq] (7.56,-0.23)-- (7.58,-1.17);
\draw [line width=1pt] (7.58,-1.17)-- (7.6,-2.03);
\draw [line width=1pt] (9.96,-0.23)-- (10.56,-0.67);
\draw (7.916720867208668,2.1479691388750606) node[anchor=north west] {$u$};
\draw (6.696080415906196,2.6833377578673727) node[anchor=north west] {$T_0$};
\draw (9.415753000387143,3.068803163541837) node[anchor=north west] {$T'$};
\draw [line width=1pt,color=wrwrwr] (6.78,0.53)-- (6.76,-2.09);
\draw [line width=1pt,color=wrwrwr] (8.72,1.49)-- (11.06,-0.17);
\draw (9.73697417178253,1.805333222719981) node[anchor=north west] {$q'_1=q_1 - t+s$};
\draw (4.104896299983404,-0.18623803993141913) node[anchor=north west] {$q'_2=q_2 -s + t$};
\draw [shift={(-2.2,2.66)},line width=1pt]  plot[domain=-2.6446240487107397:0.49696860487905337,variable=\t]({1*1.3423859355639864*cos(\t r)+0*1.3423859355639864*sin(\t r)},{0*1.3423859355639864*cos(\t r)+1*1.3423859355639864*sin(\t r)});
\draw [line width=1pt] (-1.4507114586748788,1.5461927088410352)-- (-0.58,0.94);
\draw [line width=1pt] (-0.58,0.94)-- (0.16,0.42);
\draw [line width=1pt] (0.16,0.42)-- (1.02,-0.22);
\draw [line width=1pt] (-1.4507114586748788,1.5461927088410352)-- (-1.42,0.54);
\draw [line width=1pt] (-1.42,0.54)-- (-1.38,-0.22);
\draw [line width=1pt] (-1.38,-0.22)-- (-1.36,-1.16);
\draw [line width=1pt] (-1.36,-1.16)-- (-1.34,-2.02);
\draw [line width=1pt] (1.02,-0.22)-- (1.62,-0.66);
\draw (-1.0132276975831014,2.169383883634753) node[anchor=north west] {$u$};
\draw (-2.2338681488855734,2.6833377578673727) node[anchor=north west] {$T_0$};
\draw (0.485804435595373,3.0902179083015295) node[anchor=north west] {$T$};
\draw [line width=1pt,color=wrwrwr] (-2.16,0.54)-- (-2.18,-2.08);
\draw [line width=1pt,color=wrwrwr] (-0.22,1.5)-- (2.12,-0.16);
\draw (1.2781499917039951,1.5055267960842864) node[anchor=north west] {$q_1$};
\draw (-3.0262137049941957,-0.42180023228803637) node[anchor=north west] {$q_2$};
\draw [line width=1pt,color=rvwvcq] (-1.38,-0.22)-- (1.02,-0.22);
\draw [line width=1pt,color=dtsfsf] (0.16,0.42)-- (-1.36,-1.16);
\draw (-2.0625501908080337,0.49903379237874007) node[anchor=north west] {$w_1$};
\draw (-1.0132276975831014,2.169383883634753) node[anchor=north west] {$u$};
\draw (-0.33522316243570928,1.3556235827664391) node[anchor=north west] {$u_1$};
\draw (8.744628615673908,1.312794093247054) node[anchor=north west] {$u_1$};
\draw (-2.083964935567726,-0.5502887008461912) node[anchor=north west] {$w_2$};
\draw (0.9,0.49903379237874007) node[anchor=north west] {$u_2$};
\draw (6.888813118743429,0.49903379237874007) node[anchor=north west] {$w_1$};
\draw (7.5,-0.7430214036834235) node[anchor=north west] {$u_2$};
\draw (9.1,-0.05774957137326427) node[anchor=north west] {$w_2$};
\draw [line width=1pt,color=wrwrwr] (-0.916666666666671,-1.145555555555553)-- (-0.8766666666666709,-1.985555555555552);
\draw [line width=1pt,color=wrwrwr] (0.7233333333333283,-0.5255555555555537)-- (1.363333333333328,-1.005555555555553);
\draw (-0.6920065261877141,-1.1499015541175805) node[anchor=north west] {$s$};
\draw (0.5,-0.5288739560864987) node[anchor=north west] {$t$};
\begin{scriptsize}
\draw [fill=black] (7.48209399038635,1.5313863448748422) circle (2.5pt);
\draw [fill=black] (8.36,0.93) circle (2.5pt);
\draw [fill=black] (9.1,0.41) circle (2.5pt);
\draw [fill=black] (9.96,-0.23) circle (2.5pt);
\draw [fill=black] (7.52,0.53) circle (2.5pt);
\draw [fill=black] (7.56,-0.23) circle (2.5pt);
\draw [fill=black] (7.58,-1.17) circle (2.5pt);
\draw [fill=black] (7.6,-2.03) circle (2.5pt);
\draw [fill=black] (10.56,-0.67) circle (2.5pt);
\draw [fill=black] (-1.4507114586748788,1.5461927088410352) circle (2.5pt);
\draw [fill=black] (-0.58,0.94) circle (2.5pt);
\draw [fill=black] (0.16,0.42) circle (2.5pt);
\draw [fill=black] (1.02,-0.22) circle (2.5pt);
\draw [fill=black] (-1.42,0.54) circle (2.5pt);
\draw [fill=black] (-1.38,-0.22) circle (2.5pt);
\draw [fill=black] (-1.36,-1.16) circle (2.5pt);
\draw [fill=black] (-1.34,-2.02) circle (2.5pt);
\draw [fill=black] (1.62,-0.66) circle (2.5pt);
\end{scriptsize}
\end{tikzpicture} 
  \caption{2-Switch from $T$ to $T'$.}\label{fig:new_fam_2_switch}
\end{figure}
Taking $\lambda$ as the index of $T$ and applying the Diagonalize algorithm to the trees $T$ and $T'$ we get $-\lambda -\xi-\frac{1}{a_{q_1}}-\frac{1}{a_{q_2}} =0$, where $\xi$ is the contribution of $T_0$ and,  $-\lambda -\xi-\frac{1}{a_{q'_1}}-\frac{1}{a_{q'_2}} <0$ if an only if $\rho(T)> \rho(T')$.

Assuming, $q'_1=q_1 -1$, $q'_2=q_2 +1$ and $q_1<q_2 +1$ we get, substituting  $-\lambda -\xi= \frac{1}{a_{q_1}}+\frac{1}{a_{q_2}}$ in the inequality, the equivalency
$$ \frac{1}{a_{q_1}}+\frac{1}{a_{q_2}}-\frac{1}{a_{q_1 -1}}-\frac{1}{a_{q_2 +1}}<0.$$
Adding and subtracting $-\lambda$ and using the formula~\eqref{b_j_c_j_def} we obtain that $\rho(T)> \rho(T')$ if and only if
\begin{equation}\label{eq: condit_new_type}
   a_{q_1} + \frac{1}{a_{q_1}}  < a_{q_2+1} + \frac{1}{a_{q_2+1}}.
\end{equation}
We notice that $a_j$ is increasing and $a_j<\theta <-1$. The function $\phi(x)=x+\frac{1}{x}$ is increasing in the interval $(-\infty, \, -1)$. Also, $a_{q_1} < a_{q_2+1}$ because we assume $q_1<q_2 +1$. Thus $\phi(a_{q_1}) < \phi(a_{q_2+1})$, which is exactly the claim in the equation~(\ref{eq: condit_new_type}).  We have proved the following result which is equivalent to Lemma~\ref{alpha}.
\begin{theorem} \label{thm: ordering new type}
   Let $T$ be the graph in Figure~\ref{fig:new_fam} and $T'$ be the graph obtained by a 2-switch in Figure~\ref{fig:new_fam_2_switch}. If $q_1<q_2 +1$, $q'_{1}=q_{1}-1$ and $q'_{2}=q_{2}+1$ then $\rho(T)> \rho(T')$.
\end{theorem}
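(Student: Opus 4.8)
The plan is to run the comparison method of Section~\ref{sec:family} on the pair $T,T'$ and reduce the inequality $\rho(T)>\rho(T')$ to a monotonicity statement about the single sequence $a_j$. First I would fix $\lambda=\rho(T)$ and execute \textbf{Diagonalize}$(T,-\lambda)$ and \textbf{Diagonalize}$(T',-\lambda)$. Since the 2-switch of Figure~\ref{fig:new_fam_2_switch} alters only the two pendant paths at $u$ and leaves $T_0$ untouched, both trees produce the \emph{same} contribution $\xi$ from $T_0$ at the root, while each pendant path of length $q$ contributes the value $a_q$ of the recurrence \eqref{phi-def} with $a_1=-\lambda$. As $T_0$ is a proper subtree of $T$ we have $\rho(T_0)<\lambda$, so the algorithm run on the $T_0$ part with parameter $-\lambda$ yields only negative entries, and by Theorem~\ref{thm:results_on_the_sequence} every $a_j<0$; hence the hypotheses of the comparison method hold, $f_T(u)=0$, and $\rho(T)>\rho(T')$ is equivalent to $f_{T'}(u)=-\lambda-\xi-\tfrac{1}{a_{q'_1}}-\tfrac{1}{a_{q'_2}}<0$.

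Next I would eliminate $T_0$. The root equation for $T$ gives $-\lambda-\xi=\tfrac{1}{a_{q_1}}+\tfrac{1}{a_{q_2}}$; substituting this together with $q'_1=q_1-1$ and $q'_2=q_2+1$ into $f_{T'}(u)$ turns the target inequality into
\begin{equation*}
  \frac{1}{a_{q_1}}+\frac{1}{a_{q_2}}-\frac{1}{a_{q_1-1}}-\frac{1}{a_{q_2+1}}<0.
\end{equation*}
The key algebraic move is then to use the recurrence $a_{j}=-\lambda-\tfrac{1}{a_{j-1}}$ from \eqref{b_j_c_j_def}, i.e. $\tfrac{1}{a_{j-1}}=-\lambda-a_j$, to replace $\tfrac{1}{a_{q_1-1}}$ by $-\lambda-a_{q_1}$ and $\tfrac{1}{a_{q_2}}$ by $-\lambda-a_{q_2+1}$. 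After the resulting $-\lambda+\lambda$ cancellation, the four terms collapse to the clean equivalent form \eqref{eq: condit_new_type}, namely $a_{q_1}+\tfrac{1}{a_{q_1}}<a_{q_2+1}+\tfrac{1}{a_{q_2+1}}$.

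Finally I would close the argument by monotonicity. The function $\phi(x)=x+\tfrac1x$ satisfies $\phi'(x)=1-x^{-2}>0$ on $(-\infty,-1)$, so $\phi$ is increasing there; by Theorem~\ref{thm:results_on_the_sequence} the sequence $a_j$ is strictly increasing with $a_j<\theta<-1$, so all relevant terms lie in the increasing regime of $\phi$. The hypothesis $q_1<q_2+1$ gives $a_{q_1}<a_{q_2+1}$, whence $\phi(a_{q_1})<\phi(a_{q_2+1})$, which is exactly \eqref{eq: condit_new_type}. I expect the only delicate point to be the bookkeeping of the second paragraph: verifying that the recurrence substitution really yields \eqref{eq: condit_new_type}, and confirming that one stays inside $(-\infty,-1)$ where $\phi$ is monotone — this is precisely where the bound $a_j<\theta<-1$ from Theorem~\ref{thm:results_on_the_sequence} is essential. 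Everything else is a direct application of the comparison method and of the analytical facts already recorded for $a_j$, and the statement recovers Lemma~\ref{alpha}.
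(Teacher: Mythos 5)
Your proposal is correct and follows essentially the same route as the paper's own proof: the comparison method with the common $T_0$ contribution $\xi$, the substitution of the root equation followed by the recurrence collapse to \eqref{eq: condit_new_type}, and the monotonicity of $\phi(x)=x+\tfrac{1}{x}$ on $(-\infty,-1)$ together with $a_j<\theta<-1$ and $a_{q_1}<a_{q_2+1}$. Your extra remark justifying that the $T_0$ part produces only negative entries (so the root equation is valid) is a welcome detail that the paper leaves implicit.
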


\section*{Acknowledgments}
This work had the financial support of project MATHAmSud 88881.143281/2017-01. Victor N. Schvll\"{o}ner was partially supported by PROICO 03-0918 UNSL. V. Trevisan acknowledges partial support of CNPq grants  409746/2016-9 and 303334/2016-9, CAPES-Print 88887.467572/2019-00  and FAPERGS project PqG 17/2551-0001.

\end{document}